\newtheorem{thm}{Theorem}[section]
\newtheorem{defn}[thm]{Definition}
\newtheorem{lem}[thm]{Lemma}
\newtheorem{cor}[thm]{Corollary}
\newtheorem{prop}[thm]{Proposition}
\theoremstyle{definition}
\newtheorem{rmk}[thm]{Remark}
\numberwithin{equation}{section}
\def\al{\alpha}
\def\be{\beta}
\def\ga{\gamma}
\def\de{\delta}
\def\la{\lambda}
\def\ep{\varepsilon}
\def\io{\iota}
\def\si{\sigma}
\def\Om{\Omega}
\def\R{\mathbb{R}}
\def\C{\mathbb{C}}
\def\N{\mathbb{N}}
\def\Z{\mathbb{Z}}
\def\bm{\mathbf{m}}
\def\bk{\mathbf{k}}
\def\bn{\mathbf{n}}
\def\bl{\mathbf{l}}
\def\Id{\mathrm{Id}}
\def\SL{\mathrm{SL}}
\def\SU{\mathrm{SU}}
\def\diag{\textrm{diag}}
\def\root{\textrm{root}}
\def\Eroot{\textrm{E-root}}
\def\Froot{\textrm{F-root}}
\def\Ker{\textrm{Ker}}
\def\Tr{\textrm{Tr}}
\newcommand{\Lg}{\mathfrak{g}}
\newcommand{\Lh}{\mathfrak{h}}
\newcommand{\Lb}{\mathfrak{b}}
\newcommand{\Lsl}{\mathfrak{sl}}
\newcommand{\Lsu}{\mathfrak{su}}
\newcommand{\qbinomial}[3]{\left[\genfrac{.}{.}{0pt}{}{#1}{#2}\right]_{#3}}
\title[Non-extremal weight modules]{Non-extremal weight modules for quantized universal 
enveloping algebras}
 \author{Erik Koelink}
 \address{IMAPP, Radboud Universiteit, PO Box 9010,
6500 GL Nijmegen,
 the Netherlands}
 \email{e.koelink@math.ru.nl, henrique.t.tavares@hotmail.com}
 \author{Henrique Tyrrell}
 \date{\today}
\begin{document}

\begin{abstract} 
For quantized universal enveloping algebras we construct weight modules by inducing representations
of the centralizer of the Cartan subalgebra in the quantized universal enveloping algebra. 
The induced modules arising from finite-dimensional weight modules the centralizer algebra are studied.
In particular, 
we study the induction of one-dimensional modules, and this is related to the study of 
commutative subalgebras of the centralizer algebra.
For the special case of $U_q(\Lsl(2,\C))$ we
show that we get the admissible unitary representations corresponding to the non-compact real 
form $U_q(\Lsu(1,1))$. 
\end{abstract}

\maketitle


\section{Introduction}\label{sec:intro}

Large classes of 
representations of quantized universal enveloping algebras $U_q(\Lg)$ for simple complex Lie algebras, such
as finite dimensional representations or Verma modules, see e.g. \cite{Dixm} for 
the classical case, are well understood, see e.g. 
\cite{CharP}, \cite{Jant}, \cite{Jose}, \cite{KlimS}. 
On the other hand, these representations (or modules) do not suffice for the harmonic analysis on quantum analogs of  
non-compact quantum groups. The best known example of an analytically studied non-compact quantum group
is the quantum analog of the universal enveloping algebra of $\Lsu(1,1)$. 
The irreducible $\ast$-representations have been classified by
Vaksman and Korogodski\u\i\ \cite{VaksK}, by Burban and Klimyk \cite{BurbK} and by Masuda et al. \cite{MasuMNNSU}.
In this case we see that the representation theory of $U_q(\Lsu(1,1))$ differs from the irreducible 
unitary representations of the Lie algebra $\Lsu(1,1)$. 
The so-called strange series representations do not have a classical analog; they formally vanish in the 
limit $q\to 1$. 
It turns out that in the analytic 
study of this non-compact quantum group these representations play an important role, see 
\cite{GroeKK}, \cite{KoelS}, \cite{Vaks}, \cite{VaksK} and references given there. 
The representations that play a role in this example are non-extremal weight representations, i.e. these representations are weight representations that have neither a highest weight nor a lowest weight. 
In this paper we present another way to obtain these representations.

The idea is to use the centralizer of the analog $U^0$ of the Cartan subalgebra of $U_q(\Lg)=U^-\otimes U^0\otimes U^+$, i.e. 
the trivial weight space in the weight decomposition $U_q(\Lg)=\bigoplus_{\be\in Q} U_q(\Lg)_\be$, 
where $Q$ is the corresponding root lattice, see Section \ref{ssec:notconv} for notation. 
We then construct weight representations of $U_q(\Lg)$ by inducing a weight module of the centralizer algebra 
$U_0$. The construction is called Mathieu module, being inspired by the paper \cite{Math} by Mathieu on 
the study of weight modules for Lie algebras. In Mathieu's paper \cite{Math} the parabolic induction 
is the key procedure, and in Futorny et al. \cite{FutoKZ} a quantum analogue for $\Lsl(n,\C)$ is given. 

In particular, we are interested in the case of the induction 
of $1$-dimensional modules of the centralizer algebra $U_0$. In order to do so, we look for commutative 
subalgebras of the centralizer algebra $U_0$, which is closely related to strongly orthogonal roots, 
see \cite{AgaoK}, \cite{Letz}. We show that for the case of $\Lg=\Lsl(2,\C)$ and for the 
$\ast$-structure for the non-compact real form $U_q(\Lsu(1,1))$ we recover the representations 
of \cite{BurbK}, \cite{MasuMNNSU}, \cite{VaksK}. 

In Section \ref{sec:centralizerCSA} we introduce and study the centralizer algebra $U_0$ using 
the PBW-basis and suitable height functions. We discuss commutative 
subalgebras of $U_0$ in relation to strongly orthogonal roots. 
In Section \ref{sec:Mathiemod} we introduce the 
induced representations, which we call Mathieu modules. In Section \ref{sec:Mathieumodrank1} 
we focus our attention on the induction of $1$-dimensional representations. We study the simplest case $\Lg=\Lsl(2,\C)$ in Section \ref{sec:Mathieumodsl2C}.
In Section \ref{sec:Mathieumodsln+1C} we discuss some aspects of this construction for 
$\Lg=\Lsl(n+1,\C)$. 

We expect that the non-extremal weight modules constructed in this way can be used to 
improve the understanding of the 
harmonic analysis of non-compact quantum groups, see \cite{Vaks}.

\medskip
\emph{Acknowledgement.} We thank Maarten van Pruijssen for useful discussions 
and suggestions. We also thank Kenny De Commer for discussions. 
H.~Tyrrell thanks Pablo Rom\'an and Universidad Nacional de C\'ordoba for 
its hospitality. The research of H.~Tyrrell is supported by the Brazilian agency CNPq, 
Conselho Nacional de Pesquisa e Desenvolvimento, 200678-2015/9.


\subsection{Notation and conventions}\label{ssec:notconv}

We use the notation $\N=\{1,2,3,\cdots\}$ and we use $\N_0$ for the set $\{0,1,2,3,\cdots\}$.

We use the conventions and notations for quantized universal enveloping algebras 
as in \cite{KlimS}, see also e.g.  
\cite{CharP}, \cite{Jant}. All statements in the this section can be found in \cite{KlimS}. 

Let $\Lg$ be a complex semi-simple Lie algebra with a Cartan subalgebra $\Lh$ and 
$\Phi$ be the corresponding root system. Let $n=\textrm{rank}\, \Lg$ and fix the 
simple roots $\Pi=\{\al_1,\cdots,\al_n\}$. Let $\Phi^+$ be the set of positive roots
and set  $r = |\Phi^+|$.  By $Q=\bigoplus_{i=1}^n \Z \al_i\subset \Lh^\ast$ we denote the root lattice and 
$Q^+=\bigoplus_{i=1}^n \N_0 \al_i$ denotes the corresponding positive roots. 
The Cartan matrix is  $A=(a_{i,j})_{i,j=1}^n$. Let $D=\diag (d_1,\cdots, d_n)$ be the diagonal matrix 
so that $d_i\in\{1,2,3\}$ and $DA$ is symmetric and positive definite. 
Let $(\cdot, \cdot)$ be the corresponding bilinear form on $\Lh^\ast$. 

We consider $q$ as a non-zero element of $\C$, and we assume $q$ is not a root of unity. 
We let $q_i=q^{d_i}$ and we use the $q$-binomial coefficient for  $n,k\in\N_0$ with $0\leq k\leq n$;
\begin{equation*}
\qbinomial{n}{k}{q} = \frac{[n]_q!}{[k]_q^!\, [n-k]_q!},
\qquad [k]_q! = \prod_{j=1}^k [j]_q, \qquad [j]_q= \frac{q^j-q^{-j}}{q-q^{-1}}
\end{equation*}

\begin{defn}\label{def:QUE}
The quantized enveloping algebra $U = U_q(\Lg)$ is the 
unital associative algebra generated by elements $E_i, F_i, K_i, K_i^{-1}$, 
$i \in \{1,\cdots, n\}$, subject to the relations:
\begin{gather*}
K_iK_i^{-1}=1=K_i^{-1}K_i, \qquad K_iK_j=K_jK_i, \\
K_iE_jK_i^{-1} = q_i^{a_{i,j}}E_j, \qquad 
K_iF_jK_i^{-1} = q_i^{-a_{i,j}}F_j, \qquad
E_iF_j - F_jE_i = \de_{i,j}\frac{K_i - K_i^{-1}}{q_i - q_i^{-1}}, \\
\sum_{r = 0}^{1 - a_{ij}} (-1)^r \qbinomial{1-a_{i,j}}{k}{q_i}
E_i^{1 - a_{ij} - r}E_jE_i^r = 0, \quad i\not= j, \\
\sum_{r = 0}^{1 - a_{ij}} (-1)^r \qbinomial{1-a_{i,j}}{k}{q_i}
F_i^{1 - a_{ij} - r}F_jF_i^r = 0, \quad i\not= j.
\end{gather*}
\end{defn}

Note that $q_i^{a_{i,j}}=q^{(\al_i,\al_j)}$. 
The last two relations in Definition \ref{def:QUE} are known as the 
$q$-analogs of the Serre relations.

Denote by $U^+=U_q(\mathfrak{n}^+)$ the subalgebra generated by $E_i$, $1\leq i \leq n$, and
similarly we let $U^-=U_q(\mathfrak{n}^-)$ be the subalgebra generated by $F_i$, $1\leq i \leq n$, which 
are the analogues of the universal enveloping algebra for the subalgebras $\mathfrak{n}^\pm$ 
in the decomposition $\Lg=\mathfrak{n}^-\oplus\Lh\oplus \mathfrak{n}^+$. 
Put 
$U^0$ for the subalgebra generated by $K_i^\pm$, $1\leq i \leq n$.  Then the multiplication map
\begin{equation*}
U^+ \otimes U^0 \otimes U^- \to U
\end{equation*}
is an isomorphism of vector spaces. 

In order to describe the PBW (Poincar\'e-Birkhoff-Witt) basis of $U=U_q(\Lg)$ we fix a 
reduced decomposition $w_0 =s_{i_1}\cdots s_{i_r}$ of the longest Weyl group element $w_0\in W$ 
in terms of the reflections $s_i$ corresponding to the simple root $\al_i$.
Then $\be_1=\al_{i_1}$, $\be_2=s_{i_1}(\al_{i_2})$, ..., 
$\be_r=s_{i_1}\cdots s_{i_{r-1}}(\al_{i_r})$ exhaust the positive roots $\Phi^+$. 
In the quantum case, there exist elements $T_i$, $1\leq i\leq n$, satisfying the 
braid relations for $\Lg$, and the root vectors $E_{\be_r}$, $F_{\be_r}$ are defined as
\begin{equation*}
E_{\be_r} = T_{i_1}\cdots T_{i_{r-1}}(E_{i_r}), \qquad
F_{\be_r} = T_{i_1}\cdots T_{i_{r-1}}(F_{i_r}).  
\end{equation*}
Now the PBW basis for $U$ is given by
\begin{equation*}
\{E_{\be_1}^{m_1}...E_{\be_r}^{m_r}K_1^{l_1}...K_n^{l_n}F_{\be_r}^{k_r}...F_{\be_1}^{k_1}; \ m_i, k_i \in \N_0, 
\, l_i \in \Z \}
\end{equation*}
and writing $\bm =(m_1, \cdots, m_r)\in \N_0^r$, $\bk =(k_1,\cdots, k_r)\in \N_0^r$, $\bl =(l_1, \cdots, l_n)\in \Z^n$,
we abbreviate such a basis element as $E^\bm K^\bl F^\bk$. 

Next consider the adjoint action restricted to $U^0$. For $\ga\in Q$ we consider the root subspace 
\begin{equation*}
U_\ga = \{ X\in U \mid K_iXK_i^{-1} = q^{(\al_i,\ga)} X\},
\end{equation*}
and similarly defined 
$U^\pm_\ga = \{ X\in U^\pm \mid K_iXK_i^{-1} = q^{(\al_i,\ga)} X\}$.
Then we have
\begin{equation*}
U =\bigoplus_{\ga\in Q} U_\ga, \qquad 
U^+ =\bigoplus_{\ga\in Q^+} U^+_\ga, \qquad
U^- =\bigoplus_{\ga\in Q^+} U^-_{-\ga}.
\end{equation*}
Then $\dim U^+_\ga = \dim U^-_{-\ga} = K(\ga)$, where $K(\ga)$ is the Kostant partition function, i.e. 
the number of partitions of $\ga$ as a sum of positive roots. Note that $U_\be U_\ga\subset U_{\be+\ga}$ 
and $U^\pm_\be U^\pm_\ga\subset U^\pm_{\be+\ga}$. 
For $X\in U_\ga$ we say $\root(X)=\ga$, so for $X\in U_\ga$, $Y\in U_\be$ we have 
$\root(XY)=\root(X)+\root(Y)$. 

Finally, if we write $X\in U_\ga$ in the PBW-basis, $X=\sum_{\bm, \bk, \bl} 
\xi_{\bm, \bk, \bl} E^\bm K^\bl F^\bk$, then $\xi_{\bm, \bk, \bl}\not=0$ implies
$E^\bm K^\bl F^\bk \in U_\ga$. The PBW-basis is a joint eigenbasis for the adjoint action
of $U^0$. 


\section{The centralizer of the Cartan subalgebra}\label{sec:centralizerCSA}

In this section we study the structure of the $0$-root space of $U$ as well as some of its properties. 
So we study $U_0$, which is the centralizer of the Cartan subalgebra $U^0$ of the 
quantized enveloping algebra $U$. We are in particular interested in abelian subalgebras of $U_0$. 
These will be used later to define Mathieu modules.

We start by defining
\begin{equation}\label{eq:Ugasi}
U_{\ga, \si} = U^+_\si\, U^0\, U^-_{\ga - \si}, \qquad \ga \in Q, \si\in Q^+.
\end{equation}
Note that the space is trivial unless $\ga<\si$. Then 
the PBW-basis element $E^\bm K^\bl F^\bk \in U_{\ga,\si}$ if and only if
$\sum_i m_i\be_i =\si$ and $\sum_i k_i\be_i =\si-\ga$. As a consequence, we have
\begin{equation}\label{eq:Uga=sumUgasi}
U_\ga =\bigoplus_{\si\in Q^+} U_{\ga, \si}, \qquad 
U_0 =\bigoplus_{\si\in Q^+} U_{0, \si} = \bigoplus_{\si\in Q^+} U^+_\si U^0 U^-_{-\si}.
\end{equation}
Note that the PBW-basis gives a basis for the spaces $U_{\ga,\si}$. 
We extend the definition of $\root(X)=\ga$ for $X\in U_\ga$ to 
$\Eroot(X)=\si$ whenever $X\in U_{\ga,\si}$. In particular, the $\Eroot$ of a PBW-basis element
is well-defined. Similarly, the $\Froot(X)$ can be defined, but we do not use this. 

Recall that we have fixed a set $\Pi=\{\al_1,\cdots,\al_n\}$ of simple roots, and 
for $i\in \{1,\cdots,n\}$ we define the $i$-the height function
\begin{equation}\label{eq:defhi}
h_i\colon Q^+\to \N, \qquad h_i\Bigl( \sum_{j=1}^n m_j\al_j\Bigr) = m_i.
\end{equation}
For a PBW-basis element $E^\bm K^\bl F^\bk$ we define 
$h_i(E^\bm K^\bl F^\bk)= h_i(\Eroot(E^\bm K^\bl F^\bk))$.

\begin{defn}\label{def:hpmionU}
For $X=\sum_{\bm, \bk, \bl} \xi_{\bm, \bk, \bl} E^\bm K^\bl F^\bk \in U$ define
\begin{gather*}
h_i^-(X) = \min_{\xi_{\bm, \bk, \bl}\not=0} h_i(E^\bm K^\bl F^\bk), \qquad
h_i^+(X) = \max_{\xi_{\bm, \bk, \bl}\not=0} h_i(E^\bm K^\bl F^\bk).
\end{gather*}
\end{defn}

An alternative description of the height functions is 
the following. Let $X \in U$, then, upon decomposing $X$ in the PBW basis, we can group the PBW basis elements 
that have the same $i$-height obtaining  $X = \sum_{j = 0}^\infty X_j$, with $h_i(X_j) = j$.
Only a finite number of $X_j$ is nonzero, and 
$h_i^-(X) = \min_{X_j\not= 0} j$ and $h_i^+(X) = \max_{X_j\not= 0} j$.

We have that $h_i^+(X) = 0$ for all $i$ if and only if $X \in U^0$, but it is not true 
that $h_i^-(X) = 0$ for all $i$ implies $X \in U^0$.  
Furthermore, multiplying by elements of the Cartan subalgebra $U^0$ on the left or right 
does not alter the minimal or maximal $i$-heights; 
if $X \in U^0$ and $Y \in U$ then $h_i^\pm(XY) = h_i^\pm(YX) = h_i^\pm(Y)$. 

Finally, note that $h_i^-(X+Y) \geq \min (h_i^-(X),h_i^-(Y))$ for $X,Y\in U$.

\begin{lem}\label{lem:hiincreases}
Let $\bm, \bm', \bk, \bk' \in \N^r$. Then for every 
$X \in U^0U^-$, $Y \in U^+U^0$, we have 
$h_i^-(E^{\bm}E^{\bm'}X) = h_i^-(E^{\bm + \bm'}X)$ and $h_i^-(YF^\bk F^{\bk'}) = h_i^-(YF^{\bk + \bk'})$.
\end{lem}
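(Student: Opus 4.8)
The statement says that multiplying PBW monomials $E^{\bm}$ on the left by further $E$-factors does not cause cancellation at the bottom $i$-height, and dually for $F$'s on the right. The plan is to reduce $E^{\bm}E^{\bm'}$ to the PBW basis using the Levendorskii--Soibelman straightening relations and to track the $i$-height through the process. Recall that for the convex order $\be_1 < \dots < \be_r$ coming from the reduced word, one has for $s<t$ a relation of the form $E_{\be_t}E_{\be_s} = q^{-(\be_s,\be_t)}E_{\be_s}E_{\be_t} + \sum_{\bn} c_{\bn}\, E_{\be_{s+1}}^{n_{s+1}}\cdots E_{\be_{t-1}}^{n_{t-1}}$, where every monomial $E^{\bn}$ appearing on the right-hand side satisfies $\sum_j n_j\be_j = \be_s+\be_t$. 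The key observation is that $\Eroot$ (hence $\sum n_j \be_j$, hence $h_i$ applied to it) is \emph{preserved} by each such rewriting step: both sides lie in the same $U^+_{\be_s+\be_t}$, so every term produced during straightening of $E^{\bm}E^{\bm'}$ has $\Eroot$ equal to $\sum_j(m_j+m_j')\be_j$, and therefore the \emph{same} value of $h_i$, namely $h_i(E^{\bm+\bm'})$. Multiplying on the right by $X \in U^0 U^-$ does not change $\Eroot$ of any term (it only contributes to the $U^0$ and $U^-$ parts), and by the remark preceding the lemma, left/right multiplication by $U^0$ leaves $h_i^\pm$ untouched; more to the point, $h_i$ of each resulting PBW monomial $E^{\bm''}K^\bl F^\bk$ equals $h_i(\Eroot) = h_i(\sum_j(m_j+m_j')\be_j)$.

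Concretely, here are the steps. First, observe that $E^{\bm}E^{\bm'}X = \bigl(\sum_{\bm''} \eta_{\bm''} E^{\bm''}\bigr) X$ where, by the previous paragraph, every $\bm''$ with $\eta_{\bm''}\ne 0$ satisfies $\sum_j m_j''\be_j = \sum_j(m_j+m_j')\be_j$; this is just the statement that $U^+_{\be_s+\be_t}$-membership is respected by the Serre/LS relations, together with the fact that the PBW monomials form a basis of each $U^+_\si$. Second, expand $X = \sum \zeta_{\bk,\bl} K^\bl F^\bk$ in its own PBW form and multiply out: each PBW basis element of $E^{\bm}E^{\bm'}X$ is of the form $E^{\bm''}K^\bl F^\bk$ with $\Eroot = \sum_j(m_j+m_j')\be_j$, hence $h_i(E^{\bm''}K^\bl F^\bk) = h_i\bigl(\sum_j(m_j+m_j')\be_j\bigr)$, a single constant independent of the term. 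Third, conclude $h_i^-(E^{\bm}E^{\bm'}X) = h_i\bigl(\sum_j(m_j+m_j')\be_j\bigr) = h_i^-(E^{\bm+\bm'}X)$, using that $E^{\bm+\bm'}X$ is already in (essentially) PBW form up to reordering $E^{\bm+\bm'}$, which again only produces terms of the same $\Eroot$. The $F$-side is the mirror image: $YF^\bk F^{\bk'}$ with $Y\in U^+U^0$ straightens to a combination of PBW monomials $E^\bm K^\bl F^{\bk''}$ with $\sum_j k_j''\be_j = \sum_j(k_j+k_j')\be_j$, and $h_i$ of a PBW monomial depends only on its $\Eroot$, i.e. on the $E$-part, so one must check that left-multiplication by $Y$ and the straightening of $F^\bk F^{\bk'}$ do not alter $\Eroot$ — but the $F$'s only contribute to $U^-$, and $Y$ contributes a fixed $\Eroot$, so again every term has the same $i$-height.

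The one point that needs genuine care — the step I expect to be the main obstacle — is verifying that \emph{no cancellation occurs}, i.e. that the leading monomial $E^{\bm+\bm'}$ actually appears with nonzero coefficient in the expansion of $E^{\bm}E^{\bm'}$ (and similarly for $F$). For the conclusion about $h_i^-$ this is in fact \emph{not} needed, since \emph{every} surviving term already has the same $i$-height $h_i(\sum_j(m_j+m_j')\be_j)$; the height is constant across the whole expansion, so $h_i^-$ and $h_i^+$ of $E^{\bm}E^{\bm'}X$ both equal that constant regardless of which terms cancel. Thus the real content is simply: (i) the LS/Serre relations preserve the $Q^+$-grading on $U^+$ (equivalently $\Eroot$), which is immediate from $U^+_\be U^+_\ga \subset U^+_{\be+\ga}$ stated in the excerpt; and (ii) $h_i$ of a PBW basis element depends only on its $\Eroot$, which is Definition/convention established just before Definition \ref{def:hpmionU}. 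Assembling these gives the claim with no delicate non-vanishing argument required — the apparent difficulty dissolves once one notices the $i$-height is literally constant on the expansion.
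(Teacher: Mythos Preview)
Your proof is correct and follows essentially the same route as the paper's: both rest on the single observation that $U^+_\be U^+_\ga\subset U^+_{\be+\ga}$, so every PBW term in the expansion of $E^{\bm}E^{\bm'}$ has $\Eroot$ equal to $\ga=\sum_j(m_j+m_j')\be_j$ and hence the constant $i$-height $h_i(\ga)$; your detour through the Levendorskii--Soibelman relations is unnecessary, as you yourself note in the last paragraph. One small slip on the $F$-side: you write that ``$Y$ contributes a fixed $\Eroot$'', but $Y\in U^+U^0$ may be a sum of monomials with different $\Eroot$s, so not every term has the same $i$-height; the correct phrasing is that the multiset of $\Eroot$s appearing is determined entirely by $Y$ (straightening $F^\bk F^{\bk'}$ inside $U^-$ does not touch the $E$-part), whence $h_i^-(YF^\bk F^{\bk'})=h_i^-(Y)=h_i^-(YF^{\bk+\bk'})$.
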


\begin{proof}
Take $\ga = \Eroot(E^\bm) + \Eroot(E^{\bm'})$. Since $U^+$ is a subalgebra of $U$, 
we can decompose $E^\bm E^{\bm'} = \sum_{\bn \in N^r} \xi_\bn E^\bn \in U_{\ga}$ 
with respect to the PBW-basis. 
All of these elements in the PBW expansion satisfy $E^\bn \in U_{\ga, \ga}$. 
Since $E^{\bm + \bm'} \in U_{\ga, \ga}$ as well, we have 
$h_i^-(E^\bm E^{\bm'}X) = h_i(\ga) = h_i^-(E^{\bm + \bm'}X)$.
 
The proof of the other statement follows analogously. 
\end{proof}

We are in particular interested in the function $h_i^-$ on the centralizer algebra $U_0$.

\begin{prop}\label{prop:hi-onU0} 
For $X,Y\in U_0$ we have
$h_i^-(XY)   \geq \max (h_i^-(X), h_i^-(Y))$.
\end{prop}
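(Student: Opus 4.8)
The plan is to reduce the claim to Lemma~\ref{lem:hiincreases} by carefully organizing the PBW expansion of the product $XY$. Write $X = \sum X_a$ and $Y = \sum Y_b$ grouped by $i$-height as in the remark after Definition~\ref{def:hpmionU}, so that $h_i(X_a)=a$ for the nonzero terms and $h_i^-(X) = \min\{a : X_a\neq 0\}$, and similarly for $Y$. Since $h_i^-(XY) = h_i^-\bigl(\sum_{a,b} X_a Y_b\bigr)$ and $h_i^-$ of a sum is at least the minimum of the $h_i^-$ of the summands, it suffices to show that each product $X_aY_b$ of two homogeneous pieces satisfies $h_i^-(X_aY_b) \geq \max(a,b)$. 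Replacing $X$ by $X_a$ and $Y$ by $Y_b$, we may therefore assume from the start that $X$ and $Y$ are each a sum of PBW-basis elements all sharing a common $i$-height, say $h_i(X)=a$, $h_i(Y)=b$, and we must show $h_i^-(XY)\geq \max(a,b)$.

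Next I would analyze a single product of PBW-basis elements. Each basis element of $U_0$ has, by \eqref{eq:Uga=sumUgasi}, the form $E^\bm K^\bl F^\bk$ with $\Eroot(E^\bm) = \Froot = \si$ for some $\si\in Q^+$, so it lies in $U^+_\si U^0 U^-_{-\si}$; its $i$-height is $h_i(\si)$. Given two such elements $E^{\bm}K^{\bl}F^{\bk}$ (with $\Eroot=\si$, $i$-height $a=h_i(\si)$) and $E^{\bm'}K^{\bl'}F^{\bk'}$ (with $\Eroot=\si'$, $i$-height $b=h_i(\si')$), their product is, after commuting the middle $K$'s past $F^{\bk}$ (which only rescales and does not change roots or heights),
\[
E^{\bm}\,\bigl(F^{\bk}\,E^{\bm'}\bigr)\,K^{\bl+\bl'}F^{\bk'}
\]
up to a scalar. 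The point is to expand the inner factor $F^{\bk}E^{\bm'}$ in the PBW basis: each resulting term has the shape $E^{\bm''}K^{\bl''}F^{\bk''}$ where, tracking roots, $\root(E^{\bm''}) - \root(F^{\bk''}) = \si' - (\si-\ga_X)$ with $\ga_X = \root(E^{\bm}K^{\bl}F^{\bk}) = 0$ in our case since we are inside $U_0$; more usefully, $\Eroot(E^{\bm''}) = \si' + \mu$ and $\Froot = (\si-?) $ for some $\mu\in Q^+$ absorbed from "moving $F$'s to the right past $E$'s", so that in any case $\Eroot(E^{\bm''}) \geq \si'$ coordinatewise, hence $h_i(E^{\bm''}) \geq h_i(\si') = b$. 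Then left-multiplying by $E^{\bm}$ and invoking Lemma~\ref{lem:hiincreases} (with $X$ there taken to be $K^{\bl''}F^{\bk''}F^{\bk'}\in U^0U^-$), we get that every PBW term of the full product has $i$-height equal to $h_i\bigl(\Eroot(E^{\bm})+\Eroot(E^{\bm''})\bigr) = a + h_i(E^{\bm''}) \geq a + b \geq \max(a,b)$, since $a,b\geq 0$. Taking the minimum over all PBW terms gives $h_i^-(XY)\geq \max(a,b)$ for the single-basis-element product, and summing as above gives the proposition.

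The main obstacle I anticipate is bookkeeping the roots when expanding the cross term $F^{\bk}E^{\bm'}$: one must verify precisely that straightening $F$'s past $E$'s in the PBW algorithm can only increase the $E$-root in the dominance-type order that controls $h_i$, i.e. that no PBW term of $F^{\bk}E^{\bm'}$ has $\Eroot$ with strictly smaller $i$-height than $\si'$. This is where the structure of the quantum Serre relations and the $T_i$-definition of root vectors enters, and it should follow from the fact that the commutator $[E,F]$-type corrections produced during straightening lie in $U^+_{\ge}U^0U^-_{\ge}$ with $E$-part weakly larger; I would isolate this as a small sublemma about $U_0$ (or cite the relevant straightening statement in \cite{KlimS}). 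Once that monotonicity is in hand, the rest is the additivity of $h_i$ on $E$-roots, which is exactly Lemma~\ref{lem:hiincreases}, plus the trivial inequality $a+b\ge\max(a,b)$ for nonnegative integers.
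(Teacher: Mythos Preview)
Your reduction to $i$-height-homogeneous pieces and to single PBW products is fine, and the bound $h_i^-(XY)\ge a$ really does follow from Lemma~\ref{lem:hiincreases}: the leftmost factor $E^{\bm}$ is never disturbed, so every PBW term of the product has $\Eroot \ge \si$, hence $i$-height $\ge a$.

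The step that fails is the claim that straightening $F^{\bk}E^{\bm'}$ can only \emph{increase} the $E$-root, i.e.\ that each $E^{\bm''}$ in its PBW expansion has $\Eroot(E^{\bm''})\ge\si'$. The opposite is true: the relation $E_jF_j-F_jE_j=(K_j-K_j^{-1})/(q_j-q_j^{-1})$ shows that moving an $F$ to the right past an $E$ produces a correction term with one \emph{fewer} $E$, so in fact $\Eroot(E^{\bm''})\le\si'$. Concretely, in $U_q(\Lsl(2,\C))$ take $X=Y=EF$; then
\[
XY \,=\, EFEF \,=\, E^2F^2 \;-\; \frac{1}{q-q^{-1}}\,E(K-K^{-1})F,
\]
and the second term has $i$-height $1$. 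Thus $h_1^-(XY)=1$, not $a+b=2$, so your stronger inequality $h_i^-(XY)\ge a+b$ is false, and the ``sublemma'' you hoped to cite does not exist.

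The paper obtains the missing bound $\ge b$ by switching to the $F$-side. Since $X,Y\in U_0$, each PBW term $E^{\bm+\bm''}F^{\bk''+\bk'}$ appearing in $XY$ lies again in $U_0$, so its $E$-root equals its $F$-root: $\sum_j(m_j+m''_j)\be_j=\sum_j(k''_j+k'_j)\be_j$. Now the rightmost factor $F^{\bk'}$ from $Y$ survives the straightening untouched, hence the $F$-root of every term is $\ge \sum_j k'_j\be_j$. Finally, because $Y\in U_0$ we have $\sum_j k'_j\be_j=\sum_j m'_j\be_j=\si'$, so the $i$-height is $\ge h_i(\si')=b$. That use of the $U_0$ condition to pass from $E$-root to $F$-root is the idea your argument is missing.
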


\begin{proof} We start with $X$ and $Y$ elements from the PBW basis. 
For PBW-basis elements $E^\bm F^\bk$ and $E^{\bm'}F^{\bk'}$ elements in $U_0$,
we write $F^\bk E^{\bm'} = \sum \xi_{\bm'', \bl'', \bk''}E^{\bm''}K^{\bl''}F^{\bk''}$ in the PBW basis. 
Then
\begin{equation*}
\begin{split}
h_i^-(E^\bm F^\bk E^{\bm'}F^{\bk'}) & = h_i^-(\sum \xi_{\bm'', \bl'', \bk''} E^\bm E^{\bm''}K^{\bl''}F^{\bk''}F^{\bk'}) \\
& \geq \min_{\xi_{\bm'', \bl'', \bk''}\not=0} h_i^-(E^\bm E^{\bm''}K^{\bl''}F^{\bk''}F^{\bk'}) 
= \min_{\xi_{\bm'', \bl'', \bk''}\not=0} h_i^-(E^{\bm + \bm''}F^{\bk'' + \bk'}) 
\end{split}
\end{equation*}
using Lemma \ref{lem:hiincreases} and the fact that $h_i(E^{\bm}K^{\bl}F^{\bk})=h_i(E^{\bm}F^{\bk})$. 
Since $h_i^-(E^{\bm + \bm''}F^{\bk'' + \bk'})= h_i(\sum_{j=1}^r (m_j+m_j'')\be_j)
\geq h_i(\sum_{j=1}^r m_j\be_j) = h_i^-(E^{\bm}F^{\bk})$ it follows that 
$h_i^-(E^\bm F^\bk E^{\bm'}F^{\bk'}) \geq h_i^-(E^{\bm}F^{\bk})$.

Since $E^{\bm + \bm''}F^{\bk'' + \bk'}\in U_0$ we have $\sum_{j=1}^r (m_j+m_j'')\be_j = \sum_{j=1}^r (k_j''+k_j')\be_j$. Hence
\begin{equation*}
h_i^-(E^{\bm + \bm''}F^{\bk'' + \bk'})= h_i(\sum_{j=1}^r (k_j''+k_j')\be_j)
\geq h_i(\sum_{j=1}^r k'_j\be_j) = h_i(\sum_{j=1}^r m'_j\be_j) = h_i^-(E^{\bm'}F^{\bk'})
\end{equation*}
so that we have proved the statement for $X=E^{\bm}F^{\bk}$, $Y=E^{\bm'}F^{\bk'}$ in $U_0$. 

The proof of the case $X=E^{\bm}F^{\bk}$, $Y=\sum \xi_{\bm',\bl',\bk'} E^{\bm'}K^{\bl'}F^{\bk'}$ in $U_0$
uses that any non-trivial element $E^{\bm'}K^{\bl'}F^{\bk'}$ in the expansion for $Y$ is in $U_0$. 
Then we reduce to the previous case by  
\begin{multline*}
h_i^-(E^{\bm}F^{\bk}Y) = h_i^-(E^{\bm}F^{\bk}\sum \xi_{\bm',\bl',\bk'} E^{\bm'}K^{\bl'}F^{\bk'}) 
\geq \min_{\xi_{\bm',\bl',\bk'}\not=0} h_i^-(E^{\bm}F^{\bk}E^{\bm'}K^{\bl'}F^{\bk'}) \\
=\min_{\xi_{\bm',\bl',\bk'}\not=0} h_i^-(E^{\bm}F^{\bk}E^{\bm'}F^{\bk'}) 
\geq \max \Bigl( h_i^-(E^{\bm}F^{\bk}), \min_{\xi_{\bm',\bl',\bk'}\not=0} h_i^-(E^{\bm'}F^{\bk'})\Bigr) \\
= \max ( h_i^-(E^{\bm}F^{\bk}),h_i^-(Y))
\end{multline*}
using that the appearance of $K^{\bl'}$ is immaterial, and the value $h_i^-(E^{\bm}F^{\bk})$ is independent 
of the condition for the minimalization. 

The general case then follows by writing $X=\sum \xi_{\bm,\bl,\bk} E^{\bm}K^{\bl}F^{\bk}$ and follow
\begin{multline*}
h_i^-(XY) = h_i^-(\sum \xi_{\bm,\bl,\bk} E^{\bm}K^{\bl}F^{\bk}Y) 
\geq \min_{\xi_{\bm,\bl,\bk}\not=0} h_i^-(E^{\bm}K^{\bl}F^{\bk}Y) 
=\min_{\xi_{\bm,\bl,\bk}\not=0} h_i^-(E^{\bm}F^{\bk}Y) \\
\geq \max \Bigl( \min_{\xi_{\bm,\bl,\bk}\not=0} h_i^-(E^{\bm}F^{\bk}), h_i^-(Y) \Bigr) 
= \max ( h_i^-(X),h_i^-(Y))
\end{multline*}
again using that the appearance of$K^\bl$ is immaterial, and the value $h_i^-(Y)$ is independent of 
the condition for minimalization. 
\end{proof}

Consider a subset $S\subset \{1,\cdots, n\}$, then there exists an associated disjoint 
decomposition $Q^+ = Q^+_S \cap (Q_S^+)^c$; 
\begin{align*}
Q^+_S & = \{\ga \in Q^+| \ h_i(\ga) = 0\ \forall i \notin  S \} 
=  \{\ga = \sum_{j=1}^n b_j\al_j \in Q^+ \mid i \notin S \Rightarrow b_i = 0 \} \\
\end{align*}
are the roots that can be completely written in terms of the simple roots $\{\al_i\mid i\in S\}$.
Then 
\begin{align*}
(Q^+_S)^c & = \{\ga \in Q^+|  \ \exists i \notin S \colon \  h_i(\ga) > 0    \}  
= \{\ga = \sum_{j=1}^n b_j\al_j \in Q^+ \mid\ \exists i \notin S \colon \  b_i > 0\}.
\end{align*}
From \eqref{eq:Uga=sumUgasi} we get a decomposition for the centralizer algebra;
\begin{equation*}
U_0 = U_0^S \oplus I^S, \qquad  
U_0^S = \bigoplus_{\ga \in Q^+_S} U_{0, \ga}, \quad 
I^S = \bigoplus_{\ga \in (Q^+_S)^c} U_{0, \ga}.
\end{equation*}
Consider a PBW basis element $X \in U_0$, then $X\in I^S$ if and only if $h_i^-(X) > 0$ for some 
$i \notin S$ and $X \in U_0^S$ if and only if for all $i \notin S$ we have $h_i^+(X) = 0$. 

\begin{rmk}\label{rmk:UgS0}
Keeping in the Dynkin diagram of $\Lg$  only the vertices from $S$ and the corresponding edges, 
we obtain a Dynkin diagram to which we associate the Lie algebra $\Lg_S$. Restricting the diagonal 
matrix $D$ to the set $S$, we can similarly define the quantized universal enveloping algebra $U_q(\Lg_S)$.
Then $U_q(\Lg_S)\subset U_q(\Lg)$ is a Hopf subalgebra which is invariant for the adjoint action of 
$U^0=U^0(\Lg)$. Then $U_0^S$ is generated by $U_q(\Lg_S)_0=U_q(\Lg_S)\cap U_0$ and $K_i$ for $i\notin S$.  
\end{rmk}

Now we look at commutative subalgebras of the centralizer of the Cartan subalgebra. Recall 
the notational conventions for the Lie algebra $\Lg$, in particular its Cartan matrix 
$(a_{i,j})_{1\leq i,j\leq n}$ and the quantized universal enveloping 
algebra $U=U_q(\Lg)$ as in Definition \ref{def:QUE}. 

\begin{thm}\label{thm:commsubalgU0}
Let $S\subset \{1,\cdots,n\}$. 
Assume that $a_{i,j}=0$ is for each pair $(i,j)$ with $i\not=j$ and $i,j\in S$.  
Consider the corresponding decomposition $U_0 = U_0^S \oplus  I^S$, then 
$U_0^S$ is a commutative subalgebra of $U_0$ generated by 
\[
E_iF_i, \ i\in S, \qquad K^\pm_j, \ 1\leq j \leq n
\]
The subspace $I^S$ is a two-sided ideal of $U_0$. 
\end{thm}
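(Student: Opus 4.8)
The plan is to verify the three assertions in turn: that $U_0^S$ is a subalgebra, that it is commutative, that it is generated by the displayed elements, and finally that $I^S$ is a two-sided ideal. The height machinery already developed does most of the bookkeeping.

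First I would show that $I^S$ is a two-sided ideal, since this is where Proposition \ref{prop:hi-onU0} is tailor-made to apply, and it also immediately gives that $U_0^S$ is a subalgebra once we know $U_0 = U_0^S \oplus I^S$ as vector spaces (already recorded in the excerpt). Recall that a PBW-basis element $X \in U_0$ lies in $I^S$ precisely when $h_i^-(X) > 0$ for some $i \notin S$. So take $X \in U_0$ arbitrary and $Y \in I^S$; writing $Y = \sum Y_\al$ as a sum of its components $Y_\al \in U_{0,\al}$ with $\al \in (Q^+_S)^c$, each such $\al$ has $h_i(\al) > 0$ for some $i \notin S$, hence $h_i^-(Y_\al) = h_i(\al) > 0$ for that $i$. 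By Proposition \ref{prop:hi-onU0}, $h_i^-(XY_\al) \geq h_i^-(Y_\al) > 0$, and likewise $h_i^-(Y_\al X) \geq h_i^-(Y_\al) > 0$; summing over the (finitely many) $\al$ and using $h_i^-(A+B) \geq \min(h_i^-(A),h_i^-(B))$, we get that every PBW-basis element occurring in $XY$ or $YX$ lies in some $U_{0,\ga}$ with $h_i(\ga) > 0$ for an $i \notin S$, i.e. in $I^S$. (One small point: the index $i \notin S$ witnessing membership in $I^S$ may differ from term to term, but since $h_i^-$ of a single PBW-basis element equals $h_i$ of its $\Eroot$, each individual PBW-basis element in the product is in $I^S$, and a sum of elements of $I^S$ is in $I^S$.) This proves $I^S$ is a two-sided ideal, and consequently $U_0^S = U_0 / I^S$ inherits an algebra structure; since $U_0^S$ is a subspace closed under multiplication modulo $I^S$ and $U_0^S \cap I^S = 0$, it is genuinely a subalgebra.

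Next, commutativity. For this I would invoke Remark \ref{rmk:UgS0}: $U_0^S$ is generated by $U_q(\Lg_S)_0$ together with the central-in-$U_0^S$ elements $K_j^{\pm}$, $j \notin S$ (the $K_j$ with $j \in S$ also lie in $U^0$ hence are in $U_0^S$). Since $a_{i,j} = 0$ for all distinct $i,j \in S$, the Dynkin diagram on $S$ has no edges, so $\Lg_S \cong (\Lsl(2,\C))^{|S|} \times (\text{abelian part})$ and $U_q(\Lg_S)$ is the tensor product of commuting copies of $U_{q_i}(\Lsl(2,\C))$, one for each $i \in S$, together with the remaining $K_j$'s. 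Within the $i$-th copy, $U_{q_i}(\Lsl(2,\C))_0 = \bigoplus_{m \geq 0} \C\,E_i^m K^{\bl} F_i^m$ (the $\Eroot = \Froot$ condition forces equal powers), and this algebra is commutative because $E_i F_i$ and the $K$'s commute with each other — indeed $E_i F_i - F_i E_i = (K_i - K_i^{-1})/(q_i - q_i^{-1}) \in U^0$ commutes with $E_i F_i$, and $K_i (E_i F_i) K_i^{-1} = q_i^{a_{i,i}} q_i^{-a_{i,i}} E_i F_i = E_i F_i$. Copies for distinct $i,j \in S$ commute because $E_i, F_i$ commute with $E_j, F_j$ (the Serre relations with $a_{i,j} = 0$ give $E_i E_j = E_j E_i$ and $F_i F_j = F_j F_i$, and $E_i F_j = F_j E_i$ since $i \neq j$), and $K_i$ commutes with $E_j F_j$ as $K_i E_j F_j K_i^{-1} = q_i^{a_{i,j}} q_i^{-a_{i,j}} E_j F_j = E_j F_j$. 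Hence $U_0^S$ is commutative, and the generators are exactly $\{E_i F_i : i \in S\} \cup \{K_j^{\pm} : 1 \leq j \leq n\}$: these clearly lie in $U_0^S$, and conversely $E_i^m K^{\bl} F_i^m$ is a scalar multiple of $(E_i F_i)^m K^{\bl}$ modulo lower-degree terms — more cleanly, the subalgebra they generate contains all $E_i^m K^{\bl} F_i^m$ by an induction on $m$ using the commutation relations to reorder, and these span $U_0^S$.

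I expect the main obstacle to be the last generation claim: showing that the monomials $E_i^{m_i} F_i^{m_i}$ (for $i \in S$), which span a complement of $I^S$ in $U_0$ via the PBW basis, actually lie in the algebra generated by the $E_i F_i$ and the $K$'s, rather than merely being ``leading terms'' of such. The cleanest route is probably to work copy by copy in $U_{q_i}(\Lsl(2,\C))$ and prove by induction on $m$ that $(E_i F_i)^m = E_i^m F_i^m + (\text{lower terms in } \C[K_i^{\pm}]\text{-span of } E_i^j F_i^j,\ j < m)$, using $F_i E_i = E_i F_i - (K_i - K_i^{-1})/(q_i-q_i^{-1})$ repeatedly; then invert this triangular system. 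One should also double-check the degenerate case where $S$ contains indices $i$ with $a_{i,i} \neq 2$ is impossible (the Cartan matrix always has $a_{i,i} = 2$), so no surprises there, and that the hypothesis is vacuously fine when $|S| \leq 1$.
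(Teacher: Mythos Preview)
Your approach is essentially the same as the paper's: both use Proposition \ref{prop:hi-onU0} for the ideal claim, the Serre relations with $a_{i,j}=0$ for commutativity across different $i\in S$, and the $U_q(\Lsl(2,\C))$ induction (the paper's Lemma \ref{lem:sl2EnFn=p(EF)}) to show $E_i^mF_i^m$ lies in $\C[E_iF_i,K^{\pm 1}]$ for generation. The paper reorders a general PBW-basis element of $U_0^S$ into a product of $E_i^{k_i}F_i^{k_i}$'s times $K^{\bl}$ exactly as you sketch.

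One logical slip to fix: your sentence ``since $U_0^S$ is a subspace closed under multiplication modulo $I^S$ and $U_0^S\cap I^S=0$, it is genuinely a subalgebra'' is not a valid deduction. A vector-space complement to a two-sided ideal need not be a subalgebra (e.g.\ $\C\oplus\C x$ inside $\C[x]$ with ideal $(x^2)$). What actually proves that $U_0^S$ is closed under multiplication is your generation argument: once you show the subalgebra $A$ generated by $\{E_iF_i:i\in S\}\cup\{K_j^{\pm}\}$ contains every PBW-basis element of $U_0^S$ and is contained in $U_0^S$, you get $U_0^S=A$ and hence it is a subalgebra. So the generation step is not optional polish but is doing the work; move it before (or merge it with) the commutativity claim. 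The paper organizes it this way as well, deriving both the subalgebra property and commutativity from $A=U_0^S$.
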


\begin{rmk}\label{rmk:stronglyorthroots} 
(i) Recall that two non-proportional roots $\al,\be$ are strongly orthogonal if 
$\al\perp \be$ and if $\al\pm \be$ are not roots, which plays an important role
in determining maximal abelian subspaces in symmetric pairs, see \cite[VIII,\S 7]{Helg}
and \cite{Letz} for the quantum case. 
Note that the condition in 
Theorem \ref{thm:commsubalgU0} means that $\{ \al_i\mid i \in S\}$ forms a set of 
strongly orthogonal roots. Indeed, $\al_i-\al_j$ is not a root, and if $\al_i+\al_j$ 
would be a root, so would the reflection $\al_i-\al_j$ in the hyperplane orthogonal to $\al_j$.
See e.g. \cite{AgaoK} for classification results on maximal families of strongly orthogonal roots. 

\noindent
(ii) The case $S=\emptyset$ gives $U_0^\emptyset=U^0$ and  $I^\emptyset = \bigoplus_{\ga \in Q^+\setminus\{0\}} 
U_{0, \ga}$. Then $I^\emptyset$ is the kernel of the Harish-Chandra homomorphism \cite[\S6.3.4]{KlimS},  
and \cite[\S 7.4]{Dixm} for the classical case.
\end{rmk}

\begin{cor}\label{cor:thm:commsubalgU0} 
A $1$-dimensional representation of the commutative subalgebra $U_0^S$ extends
to a $1$-dimensional representation of $U_0$. 
\end{cor}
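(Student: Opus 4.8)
The plan is to exploit the direct sum decomposition $U_0 = U_0^S \oplus I^S$ from Theorem \ref{thm:commsubalgU0}, where $U_0^S$ is a commutative subalgebra and $I^S$ is a two-sided ideal of $U_0$. Given a $1$-dimensional representation $\chi\colon U_0^S \to \C$, the natural candidate for the extension is the map $\tilde\chi\colon U_0 \to \C$ defined by $\tilde\chi(X + Y) = \chi(X)$ for $X \in U_0^S$ and $Y \in I^S$; in other words, $\tilde\chi$ is $\chi$ precomposed with the projection $U_0 \to U_0/I^S$, once we identify $U_0/I^S$ with $U_0^S$ as an algebra.

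First I would check that the quotient $U_0/I^S$ is, as an algebra, isomorphic to $U_0^S$. Since $I^S$ is a two-sided ideal by Theorem \ref{thm:commsubalgU0}, the quotient $U_0/I^S$ is a well-defined associative algebra, and the composition $U_0^S \hookrightarrow U_0 \twoheadrightarrow U_0/I^S$ is an algebra homomorphism which is bijective because the direct sum decomposition says every element of $U_0$ is uniquely $X + Y$ with $X \in U_0^S$, $Y \in I^S$. Hence $U_0^S \cong U_0/I^S$ as algebras. Consequently $\chi$, viewed via this isomorphism as a character of $U_0/I^S$, pulls back along the quotient map $\pi\colon U_0 \to U_0/I^S$ to a character $\tilde\chi = \chi \circ \pi$ of $U_0$. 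This $\tilde\chi$ is manifestly a unital algebra homomorphism (composition of such), it kills $I^S$, and it restricts to $\chi$ on $U_0^S$; so it is the desired $1$-dimensional representation of $U_0$ extending $\chi$.

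For concreteness one can also record the explicit formula: $\tilde\chi$ is determined by $\tilde\chi(K_j^{\pm}) = \chi(K_j^{\pm})$ for $1 \le j \le n$, $\tilde\chi(E_iF_i) = \chi(E_iF_i)$ for $i \in S$, and $\tilde\chi(Z) = 0$ for any PBW-basis element $Z \in U_0$ with $h_i^-(Z) > 0$ for some $i \notin S$ (equivalently $Z \in I^S$). That these assignments are consistent with the relations in $U_0$ is exactly the content of the fact that $I^S$ is a two-sided ideal and $U_0^S$ is the complementary commutative subalgebra.

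I do not expect a genuine obstacle here: the corollary is essentially a formal consequence of the structural statement in Theorem \ref{thm:commsubalgU0} (ideal plus complementary subalgebra), via the first isomorphism theorem for algebras. The only point that requires any care — and it is the closest thing to a ``main step'' — is verifying that the complement $U_0^S$ is genuinely a subalgebra (not merely a subspace) so that the composite $U_0^S \to U_0/I^S$ is multiplicative; but this is already granted by Theorem \ref{thm:commsubalgU0}, so the proof reduces to invoking it and spelling out the quotient construction.
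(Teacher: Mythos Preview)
Your proof is correct and follows essentially the same approach as the paper: extend $\chi$ to $U_0$ by declaring it zero on the two-sided ideal $I^S$, which yields an algebra homomorphism precisely because $I^S$ is an ideal. The paper states this in two lines without passing explicitly through the quotient $U_0/I^S$, but the content is identical.
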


\begin{proof} Let $\pi\colon U_0^S\to \C$ be a $1$-dimensional representation, then 
we extend $\pi$ to $U_0$ by putting $\pi\vert_{I^S}=0$. Since $I^S$ is a $2$-sided 
ideal, $\pi\colon U_0\to \C$ is a representation. 
\end{proof}

\begin{proof}[Proof of Theorem \ref{thm:commsubalgU0}]
In this case the Lie algebra $\Lg_S$ as in Remark \ref{rmk:UgS0} consists of 
$|S|$ copies of $\mathfrak{sl}(2,\C)$, so the positive roots for $\Lg_S$ are 
just the simple roots (corresponding to $S$), i.e. $\Phi_S^+=\{\al_i\mid i\in S\}$. 
Note that $E_iF_i$, $i\in S$, and $K^\pm_j$, $1\leq j \leq n$ are in $U_0^S$. 
Also, since $a_{i,j}=0$ for $i\not= j$ and $i,j\in S$, it follows from the 
Serre relations of Definition \ref{def:QUE} that $E_iE_j=E_jE_i$, $F_iF_j=F_jF_i$ 
for all $i,j\in S$. Hence, $[E_iF_i, E_jF_j]=0$ for $i,j\in S$. And since $E_iF_i\in U_0$,
we see that $E_iF_i$, $i\in S$, and $K^\pm_j$, $1\leq j \leq n$, generate a 
commutative subalgebra $A$ of the subalgebra $U_0\subset U$ 
which only involves elements from the root lattice $Q_S^+$, hence $A\subset U_0^S$.  So it suffices to show $A\supset U_0^S$.

Now take a PBW basis element in $U_{0,\ga}\subset U_0^S$ for $\ga\in Q_S^+$, which can 
be written as $E_{i_1}^{k_1} \cdots E_{i_s}^{k_s} K^\bl F_{i_s}^{k_s} \cdots F_{i_1}^{k_1}$ 
where $S=\{i_1,\cdots, i_s\}$ since the positive roots of $\Lg_S$ are the simple roots 
$\al_{i_1},\cdots,\al_{i_s}$. The $E_{i_j}$'s, respectively $F_{i_j}$'s, commute 
amongst each other, and we can move the $K^\bl$ around at the cost of a power of $q$. 
So we can rewrite this element, up to a power of $q$, as 
$E_{i_1}^{k_1}F_{i_1}^{k_1} \cdots E_{i_s}^{k_s}  F_{i_s}^{k_s} K^\bl$.
It suffices to do the $U_q(\mathfrak{sl}(2))$-calculation that $E^kF^k$ is a polynomial
in $EF$ with coefficients polynomial in $K$, $K^{-1}$, see Lemma \ref{lem:sl2EnFn=p(EF)}.
This also shows that $U_0^S$ is an algebra, as for the $U_q(\Lsl(2,\C))$ calculations in 
Section \ref{sec:Mathieumodsl2C}.

To show that $I^S$ is an ideal, it suffices to take PBW-basis elements $E^\bm F^\bk \in U_0$
and $E^{\bm'}F^{\bk'}\in I^S$ and show that 
$E^{\bm'}F^{\bk'}E^\bm F^\bk\in I^S$ and $E^\bm F^\bk E^{\bm'}F^{\bk'}\in I^S$. 
Note we can assume $E^{\bm'}F^{\bk'}\in U_{0,\ga}$ with $\ga\in (Q^+_S)^c$. Pick 
$i\notin S$ with $h_i(\ga)>0$, then by Proposition \ref{prop:hi-onU0} we have 
\[
h_i^-(E^{\bm'}F^{\bk'}E^\bm F^\bk) \geq h_i^-(E^{\bm'}F^{\bk'}) = h_i(\ga) >0
\]
hence $E^{\bm'}F^{\bk'}E^\bm F^\bk\in I^S$. Similarly, the reversed order can be 
dealt with and obtain that $I^S$ is a two-sided ideal in $U_0$. 
\end{proof}


\section{Mathieu modules}\label{sec:Mathiemod}

We stick to the notation for the quantized enveloping algebra $U=U_q(\Lg)$, the corresponding
Cartan subalgebra $U^0$ and its centralizer $U_0$ in $U$. We view $U$ as a right $U_0$-module, and
recall that $U^0\subset U_0$. 

\begin{defn}\label{def:Mathieumod}
Let $V$ be any (left) $U_0$-module and consider the induced $U$-module $U \otimes_{U_0} V$. 
If $V=\bigoplus_\ga V_\ga$ is a weight module, i.e. decomposes in terms of 
finite-dimensional weight spaces for the $U^0$-action, we say 
that the induced module $M(V) = U \otimes_{U_0} V$ is a Mathieu module of $U$ induced by $V$.
We call $\dim V$ the rank of the Mathieu module $M$. The Mathieu module $M(V)$ is called degenerate in case 
$X\cdot v= 0$ for all $v\in V$ and all $X\in U_{0,\ga}$ for all $\ga \in Q^+\setminus\{0\}$. 
\end{defn}

Note that a weight module is a module with a direct sum decomposition with respect to the action of $U^0$. 
Here $\ga\colon U^0\to \C$ is a homomorphism, and then $V_\ga =\{v\in V \mid Kv=\ga(K)v\}$. For the 
adjoint action of $U^0$ on $U=U_q(\Lg)$ we obtain the decomposition in weight spaces $U_\la$, $\la\in Q$,
corresponding to the homomorphism $q^\la \colon U_0\to \C$, $K_i\mapsto q^{(\al_i,\la)}$. 

For a Mathieu module $M(V)$, the subspace $1 \otimes V \subset M(V)$  
is a  sub-$U_0$-module isomorphic to $V$. 
Observe that the Mathieu module is a weight module;
\begin{equation}\label{eq:Mathieuweightmod}
M(V)_\ga =\bigoplus_{\ga=q^\la \mu} U_\la \otimes V_\mu.
\end{equation}
Here $q^\la \mu\colon U^0\to \C$ defined by $q^\la \mu(K) = q^\la(K) \mu(K)$ for $K\in U^0$, since
all $K\in U^0$ are group-like elements.

Recall that the weight module $V=\bigoplus_{\ga} V_\ga$ is a highest, respectively lowest, weight module 
if the weights occurring are of the form $q^\la \mu$ for some fixed $\mu$ and $\la\in -Q^+$,
respectively $\la\in Q^+$. Assuming $V_\mu\not=\{0\}$, we say that $\mu$ is the highest, respectively lowest, weight of the $U$-module $V$. 

Note that the construction of Definition \ref{def:Mathieumod} is functorial, i.e. if
$\psi \colon V\to \tilde{V}$ is a $U_0$-module map between weight modules $V$ and $\tilde{V}$, then 
$M(\psi) = \Id\otimes \psi\colon M(V)\to M(\tilde{V})$ is a $U$-module morphism extending $\psi$, 
and using \eqref{eq:Mathieuweightmod} we find that $\psi$ is surjective, respectively injective,
if and only if $M(\psi)$ is surjective, respectively injective. So the Mathieu module 
is determined 
by the equivalence class of the $U_0$-module $V$.

\begin{lem}\label{lem:universalproperty}
Assume $W$ is $U$-module which is a weight module.
Let $V\subset W$ be a $U_0$-submodule,  and let $\tilde{V}$ 
be a $U_0$-module which is a weight module. 
Assume $\psi\colon \tilde{V} \to V$ is a $U_0$-module homomorphism, then 
there is a $U$-module homomorphism $\Psi\colon M(\tilde{V}) \to W$ extending $\psi$.
\end{lem}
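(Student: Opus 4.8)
The plan is to use the universal property of induction (tensor–hom adjunction / Frobenius reciprocity for the pair $U_0 \subset U$). Since $M(\tilde V) = U \otimes_{U_0} \tilde V$ is the module induced from the $U_0$-module $\tilde V$, and $W$ is a $U$-module (hence a $U_0$-module by restriction), there is a canonical isomorphism
\[
\mathrm{Hom}_U\bigl(U \otimes_{U_0} \tilde V,\ W\bigr) \;\cong\; \mathrm{Hom}_{U_0}\bigl(\tilde V,\ W\bigr).
\]
The composite $\tilde V \xrightarrow{\psi} V \hookrightarrow W$ is a $U_0$-module homomorphism, so under this isomorphism it corresponds to a $U$-module homomorphism $\Psi \colon M(\tilde V) \to W$. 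Concretely, $\Psi$ is given on generators by $\Psi(u \otimes v) = u \cdot \psi(v)$, and one checks this is well-defined on the tensor product over $U_0$ precisely because $\psi$ is $U_0$-linear: for $X \in U_0$, $uX \otimes v$ and $u \otimes Xv$ both map to $u \cdot (X\psi(v)) = uX \cdot \psi(v)$. It is $U$-linear because left multiplication by $U$ acts only on the first tensor factor. Restricting $\Psi$ to $1 \otimes \tilde V$, which is identified with $\tilde V$ as a $U_0$-module, recovers $\psi$, so $\Psi$ extends $\psi$ in the stated sense.

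The one point that needs a remark is the hypothesis that $\tilde V$ and $W$ are weight modules: this is not actually needed to produce $\Psi$ — the adjunction argument is purely algebraic and works for any $U_0$-module $\tilde V$. The weight-module hypotheses serve to place $M(\tilde V)$ in the framework of Definition \ref{def:Mathieumod} (so that $M(\tilde V)$ is genuinely a Mathieu module, using \eqref{eq:Mathieuweightmod} and the fact that $U$ is a weight module for the adjoint $U^0$-action), and to guarantee that $\psi$ respects weight spaces; but they play no role in the verification of well-definedness or $U$-linearity of $\Psi$. So the proof is essentially a one-line invocation of Frobenius reciprocity, with the bulk of the writing going into spelling out the formula $u \otimes v \mapsto u\psi(v)$ and its well-definedness.

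There is no real obstacle here; the only thing to be careful about is the direction of the maps (we are mapping \emph{out} of an induced module, which is the "easy" direction of the adjunction, requiring only that the target be a $U_0$-module, automatic here since it is a $U$-module). If one prefers a self-contained argument avoiding the abstract adjunction, one simply defines $\Psi$ on the free module $U \otimes_{\C} \tilde V$ by $u \otimes v \mapsto u\psi(v)$, notes this descends to $U \otimes_{U_0} \tilde V$ by $U_0$-linearity of $\psi$, and checks $U$-equivariance directly — a routine calculation I would not grind through in full.
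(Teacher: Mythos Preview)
Your proof is correct and is essentially the same argument as the paper's: both define $\Psi(X\otimes v)=X\cdot\psi(v)$, check that this is well-defined over $U_0$ because $\psi$ is $U_0$-linear, and observe that $U$-equivariance and the extension property are immediate. The only difference is that you frame it via tensor--hom adjunction before unwinding the formula, whereas the paper starts directly from the bilinear map $U\times\tilde V\to W$; your additional remark that the weight-module hypotheses are not needed for the construction itself is accurate and not made explicit in the paper.
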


\begin{proof} 
Consider the bilinear map 
\[
\Psi\colon U \times \tilde{V} \to W, \qquad \Phi(X, v) = X\cdot \psi(v) \in W, \qquad X \in U, \quad v \in \tilde{V}.
\]
Then for $Z\in U_0$ we have $\Psi(XZ, v) = \Psi(X, Z\cdot v)$, since $XZ\cdot \psi(v) = X\cdot \psi(Z\cdot v)$ 
as $\psi$ is a $U_0$-intertwiner. By universality we obtain a map, also denoted $\Psi \colon M(\tilde{V})
= U\otimes_{U_0} \tilde{V} \to W$, $\Psi(X\otimes v) = X\cdot \psi(v)$, 
which by construction intertwines the $U$-action. Moreover, $\Psi(1\otimes v) = \psi(v)$, so that 
$\Psi$ extends $\psi$. 
\end{proof}

\begin{prop}\label{prop:weightmodsarequotientsMathmod}
Let $W$ be an $U$-module generated by a weight vector $w\in W$, then 
$W$ is isomorphic to a quotient of a Mathieu module. In particular, an irreducible 
weight representation of $U$ is a isomorphic to a quotient of a Mathieu module.
\end{prop}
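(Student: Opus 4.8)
The plan is to use the universal property from Lemma \ref{lem:universalproperty}. Let $W$ be generated by a weight vector $w\in W$, say $w\in W_\mu$ for some weight $\mu$. First I would produce a suitable $U_0$-submodule of $W$ containing $w$ to which the lemma can be applied, together with a weight $U_0$-module mapping onto it. The natural candidate is the cyclic $U_0$-submodule $V = U_0\cdot w\subset W$. Since $W$ is a weight module for $U$, it is in particular a weight module for $U^0$, and as $U_0$ preserves the weight decomposition (because $U_0$ is the trivial weight space, so $U_{0,\ga}$ shifts weights only through the $U^0$-part, i.e. within a fixed $U^0$-weight character up to the $q^\la$ twisting — more precisely $U_0$ commutes with $U^0$ so it preserves each $U^0$-eigenspace), $V$ inherits a weight-space decomposition from $W$. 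I must check that the weight spaces of $V$ are finite-dimensional: this follows because $V = U_0\cdot w$, and for fixed $\ga\in Q^+$ the space $U_{0,\ga}\cdot w$ lands in the single weight space $W_{q^\ga\mu}$ of $W$... but that need not be finite-dimensional in general. So instead I would take $V$ to be a weight $U_0$-module that surjects onto $U_0\cdot w$: namely let $\tilde V$ be the free/universal weight $U_0$-module on one generator of weight $\mu$, or more simply observe that $U_0\cdot w$ is a cyclic $U_0$-module, hence a quotient of the $U_0$-module $U_0\otimes_{U_0^{\le}} \C_\mu$ — actually the cleanest choice is $\tilde V = U_0 / \mathrm{Ann}(w)$ as a left $U_0$-module, which is automatically a weight module since $\mathrm{Ann}(w)$ is a graded (weight) left ideal, and it surjects onto $V = U_0\cdot w$ via $Z + \mathrm{Ann}(w)\mapsto Z\cdot w$.

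With such a weight $U_0$-module $\tilde V$ and the $U_0$-module surjection $\psi\colon \tilde V \to V\subset W$ in hand, Lemma \ref{lem:universalproperty} yields a $U$-module homomorphism $\Psi\colon M(\tilde V)\to W$ with $\Psi(1\otimes v) = \psi(v)$. I then argue $\Psi$ is surjective: its image contains $\psi(\tilde V)\supset \C w$, hence contains $U\cdot w = W$ since $w$ generates $W$ over $U$. Therefore $W\cong M(\tilde V)/\Ker\Psi$ is a quotient of the Mathieu module $M(\tilde V)$. The finiteness of weight spaces of $\tilde V$ — needed so that $M(\tilde V)$ is genuinely a Mathieu module in the sense of Definition \ref{def:Mathieumod} — holds because $\tilde V = U_0/\mathrm{Ann}(w)$ has each weight space mapping isomorphically onto a quotient of a weight space; if one insists on finite-dimensionality one can instead take $\tilde V = U_0\cdot w$ directly whenever that already has finite-dimensional weight spaces, or note that Definition \ref{def:Mathieumod} only requires the decomposition into weight spaces and the functoriality remarks following it make surjectivity transparent.

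For the ``in particular'' statement: if $W$ is an irreducible weight representation of $U$, then any nonzero weight vector $w\in W$ generates $W$ (by irreducibility, $U\cdot w$ is a nonzero submodule, hence all of $W$), and $W$ has a nonzero weight vector because it is by hypothesis a weight module, i.e. $W = \bigoplus_\ga W_\ga$ is nonzero. So the first part applies and $W$ is a quotient of a Mathieu module.

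The main obstacle I anticipate is the finiteness-of-weight-spaces bookkeeping in choosing $\tilde V$: one wants $M(\tilde V)$ to literally be a Mathieu module as defined, so $\tilde V$ must decompose into finite-dimensional $U^0$-weight spaces. The resolution is to observe that $U_0$ preserves each $U^0$-weight space of $W$ (as $U_0$ commutes with $U^0$), so $V = U_0\cdot w$ is a weight module; its weight space $V_{q^\ga\mu}$ is spanned by images of $U_{0,\ga}\cdot w$, and while $U_{0,\ga}$ is infinite-dimensional, in the irreducible case and in the cases of interest these images are finite-dimensional — more robustly, one takes $\tilde V = U_0/\mathrm{Ann}(w)$ and checks $\mathrm{Ann}(w)$ is a weight (graded) left ideal so the quotient is a weight module, accepting the mild looseness in "rank" that Definition \ref{def:Mathieumod} tolerates. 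Everything else is a routine application of the universal property and of the surjectivity criterion already recorded in the discussion after Definition \ref{def:Mathieumod}.
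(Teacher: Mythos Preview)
Your approach is essentially the paper's: take $V=U_0\cdot w\subset W$, apply Lemma~\ref{lem:universalproperty} with $\psi=\io$ the identity, and use that $w$ generates $W$ to get surjectivity of $\Psi\colon M(V)\to W$. The detour through $\tilde V=U_0/\mathrm{Ann}(w)$ is unnecessary, since that quotient is canonically isomorphic to $U_0\cdot w$ as a $U_0$-module.

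Where you overcomplicate matters is in the weight bookkeeping. You correctly note that $U_0$ commutes with $U^0$ and hence preserves each $U^0$-eigenspace, but then slip when writing that $U_{0,\ga}\cdot w$ lands in $W_{q^\ga\mu}$: the index $\ga$ in $U_{0,\ga}=U^+_\ga U^0 U^-_{-\ga}$ is the $\Eroot$, not the root; every $U_{0,\ga}$ has root $0$, so $U_{0,\ga}\cdot w\subset W_\mu$. Consequently $V=U_0\cdot w$ sits entirely in the single weight space $W_\mu$ and is a weight module with exactly \emph{one} weight. This is the observation the paper's proof makes in one line, and it dissolves most of your worries about the weight decomposition of $V$.

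Your residual concern about finite-dimensionality of the weight spaces of $V$ is legitimate: nothing in the hypothesis forces $U_0\cdot w$ to be finite-dimensional, and the paper's proof does not address this either. Both arguments tacitly allow $\dim V$ (the ``rank'') to be infinite, or else assume $W$ has finite-dimensional weight spaces so that $V\subset W_\mu$ is finite-dimensional. This is a genuine looseness in the statement rather than a defect in your argument.
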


\begin{proof}
Define the $U_0$-module $V$ generated by $w$, i.e.  $V=U_0w$, then $V$ is a weight module 
with only one weight occurring, which is the same weight as that of $w$. 
The identity map is a $U_0$-module homomorphism $\io\colon V \to V \subset W$.
By Lemma \ref{lem:universalproperty}, there is a $U$-module homomorphism $\Psi\colon 
M(V) \to W$ extending $\io$. Note that $\Psi$ is surjective, since $w$ generates $W$. 
Hence $W \cong M (V)/\Ker(\Psi)$.   
\end{proof}

\begin{cor}\label{cor:prop:weightmodsarequotientsMathmod}
Let $W$ be an irreducible highest weight $U$-module, or an irreducible lowest weight $U$-module, 
then $W$ is isomorphic to a quotient of a Mathieu module of rank $1$. 
\end{cor}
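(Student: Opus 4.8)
The plan is to combine Proposition~\ref{prop:weightmodsarequotientsMathmod} with a careful examination of the $U_0$-module generated by a highest (or lowest) weight vector, and then invoke Corollary~\ref{cor:thm:commsubalgU0}. Let $W$ be an irreducible highest weight $U$-module with highest weight vector $w$ of weight $\mu$; the lowest weight case is entirely analogous (replace $E_i$ by $F_i$ throughout, or apply the Cartan involution). By Proposition~\ref{prop:weightmodsarequotientsMathmod}, $W$ is a quotient of the Mathieu module $M(V)$, where $V = U_0 w$. So it suffices to show that $V$ is one-dimensional, i.e.\ that $U_0 w = \C w$.

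First I would observe that $w$ being a highest weight vector means $E_i w = 0$ for all $i$, and $K_i w = q^{(\al_i,\mu)} w$, so that $U^+ w = \C w$ (since $U^+$ is generated by the $E_i$ and $U^+_0 = \C$) and $U^0 w = \C w$. Now take a PBW-basis element $E^{\bm} K^{\bl} F^{\bk} \in U_{0,\si}$ for some $\si \in Q^+$. If $\si \neq 0$ then this element lies in $I^\emptyset = \bigoplus_{\ga \in Q^+\setminus\{0\}} U_{0,\ga}$, and I want to argue that it kills $w$. The key point is that such an element has a strictly positive power of some $E_{\be}$ on the left; more precisely, $E^{\bm}K^{\bl}F^{\bk}w = E^{\bm}K^{\bl}F^{\bk}w$, and the vector $K^{\bl}F^{\bk}w$ has weight $q^{\la}\mu$ with $\la \in -Q^+$, while applying $E^{\bm}$ with $\Eroot(E^{\bm}) = \si \ne 0$ raises the weight by $\si$, landing back in the $\mu$-weight space. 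Here the hard part is making precise that $E^{\bm}$ acting on $K^{\bl}F^{\bk}w$ returns a multiple of $w$ that is in fact zero: this should follow because $F^{\bk}w$ together with the irreducibility forces the relevant weight space $W_{q^{-(\si - \root\text{-correction})}\mu}$ to interact, but cleaner is to note directly that $E^{\bm}K^{\bl}F^{\bk} \in U_{0,\si}$ with $\si \ne 0$ implies this element lies in the left ideal $U U^+_{\si} \subset \sum_i U E_i$ after moving things around—actually the cleanest route is: in the PBW expression for any element of $U_{0,\si}$ with $\si \neq 0$, every term has at least one $E_i$ factor (since $\Eroot \neq 0$ means $\bm \neq 0$), hence in the ordering $E^{\bm}K^{\bl}F^{\bk}$ we cannot immediately conclude it kills $w$ because the $F$'s act first. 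The correct argument is instead that $U_{0,\si} = U^+_\si U^0 U^-_{-\si}$, and $U^-_{-\si} w \subset W_{q^{-\si}\mu}$; then $U^+_\si$ maps $W_{q^{-\si}\mu}$ to $W_\mu = \C w$, and I claim the composite is zero whenever $\si \neq 0$. This I would prove by the standard highest-weight argument: $U^+_\si W_{q^{-\si}\mu} \subseteq W_\mu$, and since $W$ is irreducible with highest weight $\mu$, any vector in $W_\mu$ is a multiple of $w$; but $E_i(U^-_{-\si}w)$ for all $i$ generates, under repeated $E$-action, a highest-weight piece, and a dimension/weight count shows it must vanish—alternatively, invoke that $w$ generates $W$ as a $U^-$-module so $W = U^- w$ and $W_\mu \cap U^-_{-\si'}w = 0$ for $\si' \neq 0$.

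Having established $U_{0,\si} w = 0$ for all $\si \in Q^+ \setminus \{0\}$, it follows that $U_0 w = U_{0,0} w = U^0 w = \C w$, so $V = \C w$ is one-dimensional. Note also that this exhibits $M(V)$ as degenerate in the sense of Definition~\ref{def:Mathieumod}. Since $V$ is a one-dimensional $U_0$-module, it is in particular a one-dimensional representation, and $M(V)$ has rank $1$; the surjection $\Psi\colon M(V) \to W$ from Proposition~\ref{prop:weightmodsarequotientsMathmod} gives $W \cong M(V)/\Ker(\Psi)$, a quotient of a rank-$1$ Mathieu module, as required. The main obstacle I anticipate is the bookkeeping in the second step—showing cleanly that $U^+_\si$ annihilates $U^-_{-\si} w$ when $\si \neq 0$—and the tidiest formulation is probably to use $W = U^- w$ together with the directness of the weight-space decomposition $W = \bigoplus_{\ga \in Q^+} W_{q^{-\ga}\mu}$ to conclude $W_\mu = \C w$ receives no contribution from $U_{0,\si}w$ with $\si \neq 0$.
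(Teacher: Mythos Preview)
Your overall strategy---reduce via Proposition~\ref{prop:weightmodsarequotientsMathmod} to showing $U_0 w = \C w$---is right, but the central claim you spend most of the proof on is \emph{false}. You assert that $U_{0,\si} w = 0$ for every $\si \in Q^+\setminus\{0\}$, and later that the resulting Mathieu module is degenerate. Neither holds. Already for $U_q(\Lsl(2,\C))$, if $w$ is a highest weight vector with $Kw=\la w$, then
\[
EF\cdot w \,=\, FE\cdot w + \frac{K-K^{-1}}{q-q^{-1}}\cdot w \,=\, \frac{\la-\la^{-1}}{q-q^{-1}}\,w,
\]
which is a nonzero multiple of $w$ unless $\la=\pm 1$. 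So $U_{0,\al}w\neq 0$ in general, and the $U_0$-module $\C w$ is \emph{not} degenerate. Your various attempts to prove the vanishing (via $I^\emptyset$, via $U^+_\si$ annihilating $U^-_{-\si}w$, via ``no contribution to $W_\mu$'') all fail for this reason: the composite $U^+_\si U^0 U^-_{-\si}$ lands in $W_\mu$ but need not land in $0$.

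The paper's argument bypasses this entirely and is one line: $U_0$ commutes with each $K_i$, so $U_0 w \subseteq W_\mu$; and since $W$ is a highest weight module, $W = U^- w$, whence $W_\mu = U^-_0 w = \C w$. Therefore $U_0 w = \C w$. You actually reach the ingredient $W_\mu=\C w$ in your final sentence, but you never combine it with the weight-preservation of $U_0$ to close the argument; instead you keep trying to force the (unnecessary and false) vanishing of the individual $U_{0,\si}w$. Drop that, and the proof is immediate.
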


We say that $W$ is an extremal weight $U$-module if 
$W$ is an irreducible highest weight $U$-module or an irreducible lowest weight $U$-module. 

\begin{proof} By Proposition \ref{prop:weightmodsarequotientsMathmod} it suffices to show that
we can take a Mathieu module of rank $1$. 
Let $w$ be the highest weight vector of $W$, then $U_0w=\C w$ since this is the only space
with the same weight as the weight of $w$. So  take the $U_0$-module
$\C w$ of dimension $1$ and apply the construction to obtain the corresponding Mathieu module $M$ 
of rank $1$. 
\end{proof}


\section{Mathieu modules of rank $1$}\label{sec:Mathieumodrank1}

\begin{lem}\label{lem:sizerank1Mathieumod}
Let $M(V)$ be a rank $1$ Mathieu module for $U$. Then, as vector spaces $U \cong M(V) \otimes U_0$.
\end{lem}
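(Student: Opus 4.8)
The plan is to reduce the statement to the claim that $U$ is \emph{free} as a right $U_0$-module, and then to verify that claim using the PBW basis and the triangularity results of Section~\ref{sec:centralizerCSA}. For the reduction, suppose $U=\bigoplus_{b\in B}b\,U_0$ is a decomposition of $U$ as a free right $U_0$-module, with $B$ a set of free generators. Since $M(V)=U\otimes_{U_0}V$ and tensoring commutes with direct sums, $M(V)\cong\bigoplus_{b\in B}(b\,U_0\otimes_{U_0}V)\cong\bigoplus_{b\in B}V$ as vector spaces, and since $\dim V=1$ this equals $\bigoplus_{b\in B}\C$. On the other hand $U\cong\bigoplus_{b\in B}U_0\cong\bigl(\bigoplus_{b\in B}\C\bigr)\otimes_\C U_0\cong M(V)\otimes_\C U_0$, which is the assertion. (In particular the vector space $M(V)$, and hence the identity $\dim U=\dim M(V)\cdot\dim U_0$, does not depend on which rank $1$ module $V$ one induces from.)

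It remains to realize $U$ as a free right $U_0$-module. Since $U^0\subseteq U_0$, and $U$ is free as a right $U^0$-module on the PBW monomials $E^\bm F^\bk$ (the powers of the $K_i$ being absorbed into $U^0$, hence into $U_0$; likewise $U_0$ is free over $U^0$ on those $E^\bm F^\bk$ with $\sum_i m_i\be_i=\sum_i k_i\be_i$), one looks for a set $B$ of free generators among these PBW monomials. The mechanism is that right multiplication by $U_0$ is triangular for the $\Eroot$, equivalently for the height functions $h_i$: if $b=E^\bm F^\bk$ has $\Eroot(b)=\si$ and $Z=E^{\bm'}K^{\bl'}F^{\bk'}$ is a PBW monomial in $U_{0,\tau}$, then, straightening $F^\bk E^{\bm'}$ and subsequently $E^\bm E^{\bm'}$ and $F^\bk F^{\bk'}$ into PBW form, the product $bZ$ has a single leading term of $\Eroot$ equal to $\si+\tau$ together with a remainder of strictly smaller $\Eroot$ and of the same root; moreover the resulting leading map $U_{0,\tau}\to U_{\root(b),\,\si+\tau}$ is injective, $U^+$ and $U^-$ being domains. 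One then builds $B$ one root at a time: for each root $\la$ and each $\si\in Q^+$ with $U_{\la,\si}\ne\{0\}$ one adjoins exactly as many PBW monomials of root $\la$ and $\Eroot$ equal to $\si$ as are needed in order that the leading terms of the products $bZ$, with $b\in B$ and $Z$ a PBW monomial of $U_0$, run bijectively over the PBW basis of $U$. Lemma~\ref{lem:hiincreases} and Proposition~\ref{prop:hi-onU0} are exactly what controls the remainders and promotes this leading-term bijection to a vector-space isomorphism $\bigoplus_{b\in B}U_0\to U$. For $\Lg=\Lsl(2,\C)$ one may take $B=\{E^m\mid m\in\N_0\}\cup\{F^k\mid k\in\N\}$; this case is treated in Section~\ref{sec:Mathieumodsl2C}.

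The straightening and triangularity bookkeeping is routine. The real obstacle is the combinatorial choice of $B$ in higher rank: one must verify that the number of PBW monomials newly adjoined at each pair $(\root,\Eroot)=(\la,\si)$ is never negative, which is an inequality between Kostant partition functions — concretely, that the power series obtained by dividing $\prod_{\be\in\Phi^+}(1-x^\be)^{-1}(1-y^\be)^{-1}$ by $\sum_\si(\dim U^+_\si)^2(xy)^\si$ has non-negative coefficients. I would isolate this as a separate combinatorial lemma. One might instead hope to avoid exhibiting a basis and prove freeness of $U$ over $U_0$ abstractly, by a flatness or graded-Nakayama argument built on the compatible PBW-type filtrations of $U$ and $U_0$; but the explicit PBW route above is the most transparent one, and it is in any case what is carried out for $\Lsl(2,\C)$ later.
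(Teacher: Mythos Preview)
Your approach differs markedly from the paper's, which disposes of the lemma in a single line via associativity of tensor products:
\[
M(V)\otimes_\C U_0=(U\otimes_{U_0}\C)\otimes_\C U_0\cong U\otimes_{U_0}(\C\otimes_\C U_0)\cong U\otimes_{U_0}U_0\cong U.
\]
You instead reduce to the claim that $U$ is free as a right $U_0$-module and sketch a PBW-triangularity construction of a free basis $B$. The reduction is correct and the triangularity mechanism you describe is the right picture, but you flag the gap yourself: the inductive choice of generators at each pair $(\la,\si)$ hinges on a non-negativity statement about Kostant partition functions that you leave as an unproved lemma. As written, then, the proposal is a good outline but not a complete proof.

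That said, your instinct to argue via freeness is well placed. The paper's one-liner is itself problematic at the step $\C\otimes_\C U_0\cong U_0$: the associativity isomorphism endows $\C\otimes_\C U_0$ with the left $U_0$-structure inherited from the character $\phi$ on the factor $\C$, so as a left $U_0$-module it is a direct sum of copies of $\C_\phi$ (one for each basis vector of $U_0$), \emph{not} the regular module $U_0$. Tensoring over $U_0$ then returns $U\otimes_{U_0}(\C\otimes_\C U_0)\cong M(V)\otimes_\C U_0$, and the chain is circular. The honest content of the lemma is exactly the freeness-type statement you set out to prove; what remains is to supply the combinatorics (or an abstract freeness argument) in full. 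For the only case where the lemma is actually invoked later, namely $\Lg=\Lsl(2,\C)$, your explicit basis $\{E^m\}\cup\{F^k\}$ does the job.
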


\begin{proof}
This follows from the associativity property of tensor products of modules over rings.
Let $V\cong\C$ as $U_0$-module, then 
\begin{gather*}
M(V) \otimes_{\C} U_0  \cong (U \otimes_{U_0} \C) \otimes_\C U_0 
\cong   U \otimes_{U_0} (\C \otimes_\C U_0) 
\cong U \otimes_{U_0} U_0  \cong U. \qedhere
\end{gather*}
\end{proof}

\begin{prop}\label{prop:r1M-maxpropersubmod}
Let $M(V)$ be a rank $1$ Mathieu module, then there exists a unique maximal proper 
submodule $W(V)$. So $M(V)/W(V)$ is the unique irreducible quotient of the Mathieu module.
\end{prop}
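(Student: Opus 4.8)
The plan is to exploit the weight decomposition \eqref{eq:Mathieuweightmod} of $M(V)$ together with the rank $1$ hypothesis $V\cong\C$, for which $1\otimes V=\C(1\otimes v_0)$ is the unique line of lowest-dimensional weight content. First I would observe that if $N\subsetneq M(V)$ is any proper submodule, then $N$ cannot contain $1\otimes v_0$: since $M(V)$ is generated by $1\otimes v_0$ as a $U$-module (indeed $U\cdot(1\otimes v_0)=U\otimes_{U_0}V=M(V)$ because $U_0\cdot v_0=\C v_0$), containing $1\otimes v_0$ would force $N=M(V)$. So every proper submodule lies in the set of submodules avoiding the distinguished line. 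The natural candidate for $W(V)$ is then the sum of all proper submodules, and the key point is to show this sum is still proper, i.e. still avoids $1\otimes v_0$.

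The main step, therefore, is: if $\{N_\lambda\}$ is a family of submodules with $1\otimes v_0\notin N_\lambda$ for each $\lambda$, then $1\otimes v_0\notin \sum_\lambda N_\lambda$. Here I would use that $M(V)$ is a weight module and that the weight space $M(V)_{\ga_0}$ containing $1\otimes v_0$ is one-dimensional — this is where rank $1$ is essential. Concretely, from \eqref{eq:Mathieuweightmod}, $M(V)_{\ga_0}=\bigoplus_{\ga_0=q^\la \mu_0}U_\la\otimes V_{\mu_0}$, and since $V$ has the single weight $\mu_0$ with $V_{\mu_0}=\C v_0$, the only contribution with $\la$ giving weight exactly $\ga_0=\mu_0$ is $\la=0$, i.e. $U_0\otimes\C v_0=\C(1\otimes v_0)$; one must check no other $\la$ contributes, which follows because $U_\la\otimes V_{\mu_0}$ has weight $q^\la\mu_0$ and $q^\la\mu_0=\mu_0$ forces $\la=0$ (as $q$ is not a root of unity and the pairing is nondegenerate on $Q$). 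Each submodule $N_\lambda$ is itself a weight module (being a $U^0$-stable subspace), so $N_\lambda\cap M(V)_{\ga_0}$ is either $0$ or all of $\C(1\otimes v_0)$; the hypothesis forces it to be $0$. Then $\bigl(\sum_\lambda N_\lambda\bigr)\cap M(V)_{\ga_0}=\sum_\lambda\bigl(N_\lambda\cap M(V)_{\ga_0}\bigr)=0$, so the sum is proper.

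With this in hand I would define $W(V)=\sum\{N : N\subsetneq M(V)\text{ a submodule}\}$; by the previous paragraph $W(V)$ is a proper submodule, and it visibly contains every proper submodule, so it is the unique maximal one. Uniqueness is then immediate: any maximal proper submodule is contained in $W(V)$, hence equals it. Finally $M(V)/W(V)$ is irreducible because any proper submodule of the quotient pulls back to a proper submodule of $M(V)$ strictly containing $W(V)$, which is impossible; and it is the unique irreducible quotient since any irreducible quotient $M(V)/N$ has $N$ a maximal proper submodule, forcing $N=W(V)$.

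I expect the only genuine obstacle to be the verification that the weight space $M(V)_{\ga_0}$ is exactly the line $\C(1\otimes v_0)$ — i.e. pinning down that no nonzero $\la\in Q$ satisfies $q^\la\mu_0=\mu_0$ as characters of $U^0$, and that $U_0\otimes_{U_0}V$ restricted to that weight is precisely $1\otimes V$. Everything else is formal manipulation of submodules and the functoriality already recorded after Definition \ref{def:Mathieumod}.
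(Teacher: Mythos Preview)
Your proposal is correct and follows essentially the same approach as the paper: both arguments hinge on the observation that the weight space $M(V)_\la = U_0\otimes_{U_0}V$ is one-dimensional, so any proper submodule (being a weight module) misses it, and hence the sum of all proper submodules is still proper. You are in fact more careful than the paper on the point you flagged as the only genuine obstacle---the paper simply asserts $M(V)_{\la q^\ga}=U_\ga\otimes_{U_0}V$ without commenting on injectivity of $\ga\mapsto q^\ga$, whereas you correctly trace this to $q$ not being a root of unity and nondegeneracy of the form on $Q$.
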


\begin{proof}
Let $V\cong \C_\la$ with weight $\la \colon U^0\to \C$, so that $M(V)\cong U\otimes_{U_0}\C$ has 
weight space decomposition $M(V)_{\la q^{\ga}}= U_\ga \otimes_{U_0} V$. 
In particular, for $\ga=0$, $M(V)_{\la}= U_0 \otimes_{U_0} V$ is one-dimensional.
Since a proper submodule $W$ is a weight module, $W$ cannot contain $M(V)_{\la}$ since 
$M(V)_{\la}=1\otimes V$ generates $M(V)$. So the union of all proper submodules is proper,
and gives the unique maximal proper submodule $W(V)$.
\end{proof}

In order to construct Mathieu modules of rank $1$ we consider Theorem \ref{thm:commsubalgU0}.
So take $S=\{i_1,\cdots, i_s\}$, $s=|S|$, as in Theorem \ref{thm:commsubalgU0}, and 
consider $\mu=(\mu_{i_1},\cdots, \mu_{i_s}) \in \C^s$,
$\mu_i\not=0$ for all $i\in S$
and $\la\colon U^0\to \C$. Define the one-dimensional module 
$\phi^S_{\la,\mu}\colon U_0=U_0^S\oplus I^S\to \C=\C^S_{\la,\mu}$ by 
\begin{equation*}
\Ker\, \phi^S_{\la,\mu} = I^S, \qquad  E_iF_i\mapsto \mu_i, \ i\in S, \qquad K_j\mapsto \la_j,\ 1\leq j\leq n. 
\end{equation*}
Note that allowing $\mu_i$'s to be zero would mean to consider a smaller subset of $S$. 

\begin{defn} 
Define $M^S_{\la,\mu}= M(\C^S_{\la,\mu})$ as the rank $1$ Mathieu modules induced 
by the one-dimensional $U_0$-representations $\phi^S_{\la,\mu}$. 
\end{defn}

In case $S=\emptyset$ we drop $\mu$ and $S$ from the notation. By the requirement that $\mu_i\not=0$ for 
all $i$, the Mathieu module $M^S_{\la,\mu}$ is degenerate if and only if $S=\emptyset$. 

Let $V(\la)= U_q(\Lg) \otimes_{U_q(\Lb^-)} \C_\la$ be the lowest weight Verma module,
where $\C_\la$ is the one-dimensional $U_q(\Lb^-)=U^0\otimes U^-$ module obtained by extending the 
one-dimensional $U^0$-representation $\la$ trivially to $U^-$. 
According to Proposition \ref{prop:weightmodsarequotientsMathmod} the module $V(\la)$ is 
a quotient of a Mathieu module.

\begin{prop}\label{prop:degMathieumodisVerma} 
Assume the lowest weight Verma module $V(\la)$ is irreducible, 
then $V(\la)\cong M_\la/W$ where $M_\la$ is the degenerate Mathieu module and $W$ its maximal 
proper invariant subspace.
\end{prop}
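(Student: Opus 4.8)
The plan is to produce explicit, mutually inverse $U$-module maps between $M_\la$ (the degenerate Mathieu module, i.e.\ $M_\la = M(\C_\la)$ with $\C_\la = \C^\emptyset_\la$, on which all of $I^\emptyset = \bigoplus_{\ga\in Q^+\setminus\{0\}} U_{0,\ga}$ acts by $0$) modulo its maximal proper submodule, and the irreducible lowest weight Verma module $V(\la)$. The starting observation is that $\C_\la$ as a $U_0$-module is exactly $\phi^\emptyset_\la$, whose kernel is the Harish-Chandra kernel $I^\emptyset$ by Remark \ref{rmk:stronglyorthroots}(ii); on the Verma module side, $\C_\la$ as a $U_q(\Lb^-)$-module is the trivial extension of $\la$ to $U^-$.

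First I would construct a surjection $M_\la \twoheadrightarrow V(\la)$. Let $v_\la\in V(\la)$ be the lowest weight vector, i.e.\ the image of $1\otimes 1$; it has weight $\la$, and $U^-$ (hence $U^0\otimes U^-=U_q(\Lb^-)$) acts on it the way $\phi^\emptyset_\la$ prescribes, since $U^-_{-\ga}\subset U_{0,\ga}\cdot(\text{something})$ — more precisely $F_i v_\la = 0$ and any element of $U^-_{-\ga}$ with $\ga\neq 0$ kills $v_\la$, which matches $\phi^\emptyset_\la\vert_{I^\emptyset}=0$ once one checks that $U^-_{-\ga}\subset I^\emptyset$ for $\ga\in Q^+\setminus\{0\}$ (immediate from $U^-_{-\ga} = U^+_0 U^0 U^-_{-\ga} = U_{0,\ga}$). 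Hence $V=\C v_\la \subset V(\la)$ is a $U_0$-submodule isomorphic to $\C_\la$, and the identity is a $U_0$-map $\C_\la \to V \subset V(\la)$. By Lemma \ref{lem:universalproperty} this extends to a $U$-map $\Psi\colon M_\la \to V(\la)$, which is surjective because $v_\la$ generates $V(\la)$ as a $U$-module. Since $V(\la)$ is irreducible and $\Psi$ is nonzero, $\ker\Psi$ is a proper submodule of $M_\la$, hence contained in the unique maximal proper submodule $W$ of Proposition \ref{prop:r1M-maxpropersubmod}; conversely $\Psi(W)$ is a proper submodule of the irreducible $V(\la)$, so $\Psi(W)=0$ and $W\subseteq \ker\Psi$. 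Therefore $\ker\Psi = W$ and $M_\la/W \cong V(\la)$.

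I expect the one genuine point requiring care is the verification that the $U_0$-action on $v_\la\in V(\la)$ really coincides with $\phi^\emptyset_\la$ on \emph{all} of $I^\emptyset$, not merely on the generators $F_i$: one must see that every PBW basis element $E^\bm K^\bl F^\bk$ lying in $U_{0,\ga}$ with $\ga\neq 0$ annihilates $v_\la$. This follows because such an element has $\bk\neq 0$ (as $\sum k_j\be_j = \ga - 0 = \ga \neq 0$ forces some $k_j>0$), hence ends in a genuine $F$-factor, which kills the lowest weight vector — so the element acts as $0$, matching $\phi^\emptyset_\la$. Everything else is formal: the universality of induction (Lemma \ref{lem:universalproperty}), the weight-space bookkeeping \eqref{eq:Mathieuweightmod}, and the uniqueness of the maximal submodule. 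An alternative, equally clean finish is to run Proposition \ref{prop:weightmodsarequotientsMathmod} directly: $V(\la)$ is generated by the weight vector $v_\la$, and $U_0 v_\la = \C v_\la$ since $\la$ is the lowest weight and no other weight of $V(\la)$ equals $\la$, so $V(\la)$ is a quotient of the rank-$1$ Mathieu module $M(\C v_\la) = M_\la$; irreducibility of $V(\la)$ then forces the kernel to be maximal, i.e.\ equal to $W$, giving $V(\la)\cong M_\la/W$.
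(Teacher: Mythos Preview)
Your argument is correct and takes a route dual to the paper's. The paper works inside $M_\la$: it shows the submodule $W_0$ generated by the $F_i\otimes 1$ is proper (using the height functions to see that any expression $\sum_i X_iF_i$ landing in $U_0$ must lie in $I^\emptyset$, hence acts as zero on $1\otimes 1$), deduces that the image of $1\otimes 1$ in $M_\la/W$ is a lowest weight vector of weight $\la$, and then invokes the universal property of the \emph{Verma} module to get a surjection $V(\la)\twoheadrightarrow M_\la/W$, which is an isomorphism by irreducibility. You go the other way, using the universal property of the \emph{Mathieu} module (Lemma~\ref{lem:universalproperty}) to build $\Psi\colon M_\la\to V(\la)$ and then identify $\ker\Psi$ with $W$. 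Both approaches hinge on the same underlying fact---that the $\la$-weight space in either module is one-dimensional---but your route avoids the explicit height-function argument for $W_0$, at the cost of verifying the full $U_0$-action on $v_\la$.

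Two small corrections. First, the parenthetical ``$U^-_{-\ga} = U^+_0 U^0 U^-_{-\ga} = U_{0,\ga}$'' is false: $U^-_{-\ga}$ lies in $U_{-\ga}$, not in $U_0$, so it is not a subspace of $I^\emptyset$ at all. The correct verification is precisely the one you give in your next paragraph: a PBW element $E^\bm K^\bl F^\bk\in U_{0,\ga}$ with $\ga\neq 0$ has $\bk\neq 0$, hence ends in a genuine $F$-factor and annihilates the lowest weight vector. Second, the assertion ``$\Psi(W)$ is a proper submodule of $V(\la)$'' deserves one sentence: since $W$ is proper it misses the one-dimensional $\la$-weight space of $M_\la$ (cf.\ the proof of Proposition~\ref{prop:r1M-maxpropersubmod}), and $\Psi$ preserves weights, so $\Psi(W)$ misses $v_\la$. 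Alternatively, and perhaps more transparently, once you have $\ker\Psi\subseteq W$ there is an induced surjection $V(\la)\cong M_\la/\ker\Psi\twoheadrightarrow M_\la/W$ onto a nonzero module; irreducibility of $V(\la)$ forces this to be an isomorphism.
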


\begin{proof} Consider the invariant space $W_0$ of $M_\la$ generated by $F_i\otimes 1$, $1\leq i \leq n$. 
We first observe that $W_0$ is a proper subspace, and for this it suffices to show that 
$1\otimes 1\notin W_0$. Indeed, if it does then we have $X_i\in U$ so that 
$1\otimes 1 = \sum_{i=1}^n X_iF_i\otimes 1$, and decomposing $X_i = \sum_\be X_i^\be$ according
to $U = \bigoplus_{\be\in Q} U_\be$ we have 
$1\otimes 1 = \sum_{i=1}^n \sum_{\be} X_i^\be F_i\otimes 1$.
Considering the weight $\la$ we require that for non-zero terms in the sum we have 
$X_i^\be F_i\in U_0$, and then $h_i^-(X_i^\be F_i)>0$, so that $X_i^\be F_i\in I=I^\emptyset$ 
which acts as zero. So $1\otimes 1\notin W_0$.

Let $W$ be the maximal proper subspace, which contains $W_0$ by
Proposition \ref{prop:r1M-maxpropersubmod}. Then the image $v$ of $1\otimes 1$ 
in $M(\C_\la)/W$ satisfies $F_i\cdot v=0$ for all $1\leq i\leq n$. 
So it is a lowest weight vector of weight $\la$. Assuming $V(\la)$ is
irreducible, we find $M(\C_\la)/W \cong V(\la)$. 
\end{proof}
 
Next we discuss the unitarizability of the rank $1$ Mathieu modules.
We restrict to case of real $q$, and we consider the $\ast$-structures 
as in the classification of Twietmeyer \cite{Twie}, see \cite[\S 9.4]{CharP}. 
Then  the $\ast$-structure is given by an involutive Dynkin diagram
automorphism $\eta$ and a set of numbers $s_i\in \{\pm 1\}$, $1\leq i \leq n$,
so that 
\begin{equation}\label{eq:aststructures}
K_i^\ast = K_{\eta(i)}, \qquad E_i^\ast = s_i F_{\eta(i)} K_{\eta(i)},
\qquad F_i^\ast = s_i K^{-1}_{\eta(i)} E_{\eta(i)}
\end{equation}
with the condition that $s_i=1$ if $\eta(i)\not= i$. 
From \eqref{eq:aststructures} we see that $(U^0)^\ast=U^0$, and 
this gives $(U_\be)^\ast = U_{-\eta(\be)}$ extending $\eta$ to $Q$
by $\eta(\be) = \eta(\sum_{i=1}^n b_i\al_i)= \sum_{i=1}^n b_i\al_{\eta(i)}$. 

We extend $\phi^S_{\la,\mu} \colon U = \bigoplus_{\be\in Q} U_\be \to \C$ by
first projecting on $U_0$ and next applying the $1$-dimensional representation
$\phi^S_{\la,\mu}$ of $U_0$.

\begin{prop}\label{prop:unitaryMathieumod}
Let the $\ast$-structure be given by \eqref{eq:aststructures}, and assume 
$\phi^S_{\la,\mu} \colon U\to \C$ as defined above is a positive linear functional.
Then $M(\C^S_{\la,\mu})/N$ is an irreducible unitary $U$-module, where 
\[
N= \{ X\cdot (1\otimes 1)\mid \phi^S_{\la,\mu}( X^\ast X)=0 \}.
\]
\end{prop}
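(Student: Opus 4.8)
The plan is to recognize this as the standard GNS-type construction that produces an irreducible unitary module from a positive linear functional on a $\ast$-algebra, adapted to the induced-module setting. First I would verify that $N$ is a $U$-submodule of $M(\C^S_{\la,\mu})$: since every element of $M(\C^S_{\la,\mu})$ is of the form $X\cdot(1\otimes 1)$ for some $X\in U$ (because $1\otimes 1$ generates the Mathieu module), the set $N$ is the image under $X\mapsto X\cdot(1\otimes 1)$ of the left ideal $\{X\in U\mid \phi^S_{\la,\mu}(X^\ast X)=0\}$, which is a left ideal by the Cauchy–Schwarz inequality for the positive sesquilinear form $(X,Y)\mapsto \phi^S_{\la,\mu}(Y^\ast X)$; hence $N$ is a $U$-submodule. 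One should check $N$ is a proper submodule, i.e. $1\otimes 1\notin N$, which holds because $\phi^S_{\la,\mu}(1^\ast\cdot 1)=\phi^S_{\la,\mu}(1)=1\neq 0$ (the functional sends $1\in U_0$ to the scalar $1$).

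Next I would put the inner product on the quotient $M(\C^S_{\la,\mu})/N$. Define $\langle X\cdot(1\otimes 1), Y\cdot(1\otimes 1)\rangle = \phi^S_{\la,\mu}(Y^\ast X)$; this is well-defined precisely because $N$ is exactly the radical of this form (if $X\cdot(1\otimes 1)\in N$ then $\phi^S_{\la,\mu}(Y^\ast X)=0$ for all $Y$ by Cauchy–Schwarz, and conversely), it is sesquilinear, and positive-definite on the quotient by construction. The $\ast$-compatibility $\langle Z\cdot\xi,\eta\rangle = \langle \xi, Z^\ast\cdot\eta\rangle$ for $Z\in U$ follows from $\phi^S_{\la,\mu}(Y^\ast Z X) = \phi^S_{\la,\mu}((Z^\ast Y)^\ast X)$ together with $(Z^\ast)^\ast=Z$. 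So $M(\C^S_{\la,\mu})/N$ is a unitary (pre-Hilbert) $U$-module. That it is a weight module with finite-dimensional weight spaces is inherited from $M(\C^S_{\la,\mu})$ via \eqref{eq:Mathieuweightmod}, since $N$ is a weight submodule (the form respects the weight grading because $U^0$ acts by unitaries with the weights being the relevant characters).

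Finally, for irreducibility I would argue that $M(\C^S_{\la,\mu})/N$ is a cyclic unitary module generated by the image $v$ of $1\otimes 1$, which spans the one-dimensional weight space of weight $\la$ (using that $M(\C^S_{\la,\mu})_\la$ is one-dimensional, as in the proof of Proposition \ref{prop:r1M-maxpropersubmod}). Any nonzero $U$-submodule $W'$ is a weight submodule, and its orthogonal complement $W'^\perp$ is also a $U$-submodule by $\ast$-compatibility; if $W'\neq M(\C^S_{\la,\mu})/N$ then $W'^\perp$ is a nonzero proper submodule not containing $v$ (since $v$ generates everything), but then $W'^\perp$ lies in the unique maximal proper submodule $W(\C^S_{\la,\mu})/N$ of the quotient — here I invoke Proposition \ref{prop:r1M-maxpropersubmod}, noting its proof applies verbatim to the quotient since the argument only uses that $1\otimes 1$ generates and lies in a one-dimensional weight space. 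Then $W'=(W'^\perp)^\perp\supseteq$ everything orthogonal to that maximal submodule, and since the maximal proper submodule has trivial intersection with the weight space $\C v$, positive-definiteness forces $v\in W'$, hence $W'$ is everything — contradiction. So the quotient is irreducible.

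The main obstacle I expect is the well-definedness and positive-definiteness bookkeeping at the level of the induced module rather than the algebra: one must be careful that $\phi^S_{\la,\mu}(Y^\ast X)$ depends only on $X\cdot(1\otimes 1)$ and $Y\cdot(1\otimes 1)$, not on the choice of lifts $X,Y\in U$ — but this is automatic because the radical of the form on $U$ is precisely the left ideal whose image is $N$, so the induced form on $M/N$ is well-defined; the subtlety is just confirming that the kernel of $X\mapsto X\cdot(1\otimes 1)$ is contained in the radical of the form, which holds since that kernel is the left ideal $U\cdot I^S$ (plus relations) on which $\phi^S_{\la,\mu}$ vanishes, and more directly: if $X\cdot(1\otimes 1)=0$ then $\phi^S_{\la,\mu}(Y^\ast X) = \langle X\cdot(1\otimes 1), Y\cdot(1\otimes 1)\rangle$ should already be interpreted via the module, so there is a mild circularity to untangle by instead defining the form on $U$ first, checking its radical contains the kernel, and only then descending to $M/N$. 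I would also need to confirm $N$ as defined (the image of the radical) really is a submodule and not merely a subspace, which is the Cauchy–Schwarz/left-ideal point already noted.
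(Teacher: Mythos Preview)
Your proposal is correct and follows the same GNS-type construction as the paper: define the sesquilinear pairing $\langle X\cdot(1\otimes1),\,Y\cdot(1\otimes1)\rangle=\phi^S_{\la,\mu}(Y^\ast X)$, use Cauchy--Schwarz to see that $N$ is a $U$-submodule, and pass to the quotient to obtain a unitary module. The well-definedness issue you worry about is exactly what the paper disposes of with the observation $\phi(XZ)=\phi(ZX)=\phi(Z)\phi(X)$ for $Z\in U_0$: since $\phi$ on $U$ is projection to $U_0$ followed by the algebra homomorphism $\phi|_{U_0}$, this identity shows the form is balanced over $U_0$ and hence descends to $U\otimes_{U_0}\C$, so your ``mild circularity'' dissolves without needing to untangle kernels explicitly.

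The one substantive difference is the irreducibility step. The paper's argument is minimal: it takes the cyclic submodule $V$ generated by the image of $1\otimes1$ and checks $V^\perp=0$ by setting $Y=X$. Your argument instead routes through Proposition~\ref{prop:r1M-maxpropersubmod} applied to the quotient and a double-orthocomplement identity $(W'^\perp)^\perp=W'$. This is valid but more elaborate than necessary; the streamlined form of your own idea is: a proper weight submodule $W'$ cannot meet the one-dimensional weight space $\C v$ (else $W'$ contains the cyclic vector and is everything), so $v\in (W')^\perp$ by orthogonality of weight spaces, hence $(W')^\perp=M/N$ and $W'=0$. Your use of $(W'^\perp)^\perp=W'$ tacitly relies on the weight spaces being finite-dimensional, which holds here by \eqref{eq:Mathieuweightmod} but is worth making explicit.
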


Note that $X\in U_\be$  gives $X^\ast X \in U_{\be- \eta(\be)}$ so that 
$U_\be\subset N$  in case $\eta(\be)\not=\be$. 

\begin{proof} 
Since $S$, $\la$ and $\mu$ are fixed, we use the notation $\phi=\phi^S_{\la,\mu}$ in the proof. 
Note that for $X\in U$, $Z\in U_0$ we have $\phi(XZ)=\phi(ZX)=\phi(Z)\phi(X)$, since this 
is true for $X\in U_\be$ for any $\be\in Q$ by $U_0U_\be\subset U_\be$ and $U_0$ being $\ast$-invariant. 
Define the sesquilinear form 
\[
\langle \cdot, \cdot \rangle \colon M(\C^S_{\la,\mu}) \times M(\C^S_{\la,\mu}) \to \C, 
\qquad \langle X\cdot(1\otimes 1), Y\cdot(1\otimes 1) \rangle = \phi(Y^\ast X),
\]
which is well-defined by the previous observation. Then the 
Cauchy-Schwarz inequality 
\[
|\phi(Y^\ast X)|^2 \leq \phi(X^\ast X) \phi(Y^\ast Y)
\]
implies that $N$ is invariant subspace. The space $M(\C^S_{\la,\mu})/N$
is an inner product space and the action of $U$ is unitary by construction. 

The subspace $V$ generated by the action of $U$ on the image of $1\otimes 1$ in $M(\C^S_{\la,\mu})/N$
is an invariant subspace. Since the representation is unitary, we know that the orthocomplement is
invariant as well and we show it is trivial.
So assume $X\cdot (1\otimes 1 + N)$ is perpendicular to $V$, then 
\[
\phi(Y^\ast X) = \langle X\cdot (1\otimes 1 + N), Y\cdot (1\otimes 1 + N) \rangle = 0 \quad \forall \,
Y \in U. 
\]
In particular, taking $Y=X$ gives $\phi(X^\ast X)=0$ and $X\cdot (1\otimes 1)\in N$, so the 
orthocomplement is trivial. 
\end{proof}

Since we require $\phi^S_{\la,\mu}$ to be a positive functional, we see that 
we require $\overline{\la_i} =\la_{\eta(i)}$ and $\overline{\mu_i} = \mu_{\eta(i)}$,
since $(E_iF_i)^\ast= E_{\eta(i)}F_{\eta(i)}$ and $K_i^\ast=K_{\eta(i)}$. 
Assuming that $S\subset \{ i\mid \eta(i)=i\}$, we have $\mu_i, \la_i\in \R$ and 
\[
E_iF_i = s_i K_i F_i^\ast F_i \quad \Longrightarrow \quad 
\mu_i =s_i \la_i \phi^S_{\la,\mu}(F_i^\ast F_i)
\]
so that $\mu_i\la_i>0$ in case $s_i=1$ and $\mu_i\la_i<0$ in case $s_i=-1$. 


\section{Mathieu Modules for $U_q(\Lsl(2,\C))$}\label{sec:Mathieumodsl2C}

In this section we discuss Mathieu modules for the simplest quantum algebra $U_q(\Lsl(2,\C))$. 
The quantum algebra $U_q(\Lsl(2,\C))$ is of type $A_1$ and has the $1\times1$ Cartan matrix $(2)$. 
By Definition \ref{def:QUE}
$U_q(\Lsl(2,\C))$ is generated by elements $E=E_1$, $F=F_1$, $K=K_1$, where the quantum Serre relations are void. 
The root system is $\Phi=\{\pm \al\}$. 

We show that the Mathieu modules can be used to obtain all irreducible unitary modules for the 
$U_q(\Lsu(1,1))$, i.e. the quantum algebra $U_q(\Lsl(2,\C))$ equipped with the $\ast$-structure 
\begin{equation}\label{eq:starforUsu11}
K^\ast = K, \quad E^\ast = -FK, \quad F^\ast=-K^{-1}E, 
\end{equation}
see \eqref{eq:aststructures}. 

\begin{defn}\label{def:admissibletypeI}
A $U_q(\Lsu(1,1))$-module $V$ is admissible if $V$ has a weight space decomposition 
$V = \bigoplus V_\sigma$ for the action of $K$ with finite-dimensional weight spaces 
$V_\sigma$. The module $V$ is of type I if the eigenvalues $\sigma$ are of the form 
$q^\tau$ for $\tau\in \R$. 
\end{defn}

The unitary admissible type I representations of $U_q(\Lsu(1,1))$ have been 
classified by Vaksman and Korogodski\u\i\ \cite{VaksK}, Burban and Klimyk \cite{BurbK} and Masuda et al. \cite{MasuMNNSU}, and
they play an important role in the harmonic analysis on the quantum group analog of 
$SU(1,1)$. The purpose is to show that one can obtain these representations 
from the Mathieu modules for $U_q(\Lsl(2,\C))$. 

\subsection{Mathieu modules for $U_q(\Lsl(2,\C))$}\label{ssec:Mathieumodsl2C}

For future reference we collect some well-known commutation relations 
in Lemma \ref{lem:commutationsl2}. The proof is a straightforward verification by induction and the 
relations of Definition \ref{def:QUE} for the case $U_q(\Lsl(2,\C))$, see e.g. \cite{KlimS}.  

\begin{lem} \label{lem:commutationsl2}
For $n, m \in\N_0$ we have 
\begin{enumerate}[(i)]
\item $K^nE^m = q^{2mn}E^mK^n$ and $K^nF^m = q^{-2mn}F^mK^n$, \\[1mm]
\item $\displaystyle{EF^n = F^nE + \frac{q^n-q^{-n}}{q-q^{-1}}F^{n-1}
\frac{q^{1 - n}K - q^{n - 1}K^{-1}}{q-q^{-1}}}$,\\[1mm]
\item $\displaystyle{FE^n = E^nF - + \frac{q^n-q^{-n}}{q-q^{-1}}E^{n-1}
\frac{q^{n-1}K - q^{1-n}K^{-1}}{q-q^{-1}}}$.
\end{enumerate}
\end{lem}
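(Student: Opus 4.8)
The statement to prove is Lemma~\ref{lem:commutationsl2}, which collects three standard commutation relations in $U_q(\Lsl(2,\C))$. The plan is to prove each part by induction on the relevant exponent, using only the defining relations of Definition~\ref{def:QUE} specialized to the $A_1$ case, namely $KEK^{-1}=q^2E$, $KFK^{-1}=q^{-2}F$, and $EF-FE=(K-K^{-1})/(q-q^{-1})$.

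For part~(i), I would induct on $m$. The base case $m=1$ is exactly the relation $KE=q^2EK$ (and its iterate $K^nE=q^{2n}EK^n$ follows by a trivial induction on $n$, or one simply notes $K^n$ is group-like so conjugation by $K^n$ multiplies $E$ by $q^{2n}$). For the inductive step, $K^nE^{m+1}=(K^nE^m)E=q^{2mn}E^mK^nE=q^{2mn}E^m(q^{2n}EK^n)=q^{2(m+1)n}E^{m+1}K^n$. The $F$-statement is identical with $q$ replaced by $q^{-1}$. This part is routine.

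For part~(ii), I would induct on $n$, the base case $n=1$ being the defining relation $EF=FE+(K-K^{-1})/(q-q^{-1})$, which matches the claimed formula since $(q^1-q^{-1})/(q-q^{-1})=1$ and $F^0=1$ and $(q^0K-q^0K^{-1})/(q-q^{-1})=(K-K^{-1})/(q-q^{-1})$. For the inductive step, write $EF^{n+1}=(EF^n)F$, substitute the inductive hypothesis, push the single $E$ in $F^nE$ through one more $F$ using the base case, and collect terms; one needs the identity $q^{-1}(q^n-q^{-n})+(q-q^{-1})=q(q^n-q^{-n})\cdot(\text{something})$ — more precisely, after using part~(i) to move $K^{\pm1}$ past the extra $F$ (picking up $q^{\mp2}$) and combining the two contributions, the coefficient of $F^n$ collapses to $(q^{n+1}-q^{-n-1})/(q-q^{-1})$ times $(q^{-n}K-q^{n}K^{-1})/(q-q^{-1})$ by the telescoping identity $q^{-1}\frac{q^n-q^{-n}}{q-q^{-1}} + q^{\mp\cdots}$... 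The one genuine computation is verifying $\frac{q^n-q^{-n}}{q-q^{-1}}\cdot q^{\pm1} + 1 = \frac{q^{n+1}-q^{-n-1}}{q-q^{-1}}$ for the appropriate sign choices, which is elementary. Part~(iii) is entirely parallel, induct on $n$ with base case the defining relation rewritten as $FE=EF-(K-K^{-1})/(q-q^{-1})$.

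The main (and only mild) obstacle is purely bookkeeping in part~(ii)--(iii): keeping track of the powers of $q$ that appear when commuting $K^{\pm1}$ past an extra $F$ (resp.\ $E$) via part~(i), and then recognizing that the resulting sum of two terms $F^{n-1}(q^{1-n}K-q^{n-1}K^{-1})$-type expressions telescopes into the single $F^{n}(q^{-n}K-q^{n}K^{-1})$-type expression with the quantum-integer coefficient $[n+1]_q=(q^{n+1}-q^{-n-1})/(q-q^{-1})$. Since the lemma is stated as "well-known" and the proof as "a straightforward verification by induction," I would present the base cases explicitly, indicate the inductive step for part~(ii) in a couple of lines, and leave the sign-symmetric parts~(i) and~(iii) to the reader, as the paper's phrasing invites.
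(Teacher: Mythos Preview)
Your proposal is correct and follows exactly the approach the paper indicates: the paper's proof consists of the single sentence ``a straightforward verification by induction and the relations of Definition~\ref{def:QUE},'' and your induction on $m$ for part~(i) and on $n$ for parts~(ii)--(iii), using only $KEK^{-1}=q^2E$, $KFK^{-1}=q^{-2}F$, and $EF-FE=(K-K^{-1})/(q-q^{-1})$, is precisely that verification. The only bookkeeping, as you note, is the telescoping of the $q$-integer coefficients after commuting $K^{\pm1}$ past one extra $F$ (or $E$), and this goes through without issue.
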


The PBW basis is now given by $E^mK^lF^k$ for $m,k\in\N_0$, $l\in\Z$. 
For a given element $E^mK^lF^k$ of the PBW basis we have $KE^mK^lF^k = q^{2m - 2k}E^mK^lF^kK$, so that 
$E^mK^lF^k \in U_0$ if and only if $m = k$.  
Let $X = \sum_{m, l, k} \xi_{m, l, k} E^mK^lF^k$ (finite sum) be an arbitrary element of $U_0$ written in its PBW basis decomposition. Then each $E^mK^lF^k$ is also in $U_0$ and so  
$\xi_{m, l, k} \neq 0$ implies $k = m$. In this case, the element $E^mK^lF^m \in U_{0, m\alpha}$. 
This proves Lemma \ref{lem:sl2U0}.

\begin{lem}\label{lem:sl2U0}
For each $n \in \N_0$, $U_{0, n\alpha}$ is one-dimensional $U^0$-module spanned by $E^nF^n$.
Moreover, $\{E^nK^lF^n; \ n \in \N_0, l \in \Z  \}$
is a basis for $U_0$.
\end{lem}

\begin{lem}\label{lem:sl2EnFn=p(EF)}
$E^nF^n= (EF)^n + \sum_{i=0}^{n-1} (EF)^i c_i(K,K^{-1})$ for some polynomial $c_i(K,K^{-1})$ 
in $K$ and $K^{-1}$ for all $n\in\N$.
\end{lem}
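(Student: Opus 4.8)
The plan is to prove Lemma \ref{lem:sl2EnFn=p(EF)} by induction on $n$. The base case $n=1$ is trivial since $E^1F^1 = (EF)^1$. For the inductive step, I would assume the statement for $n$ and try to express $E^{n+1}F^{n+1}$ in the desired form. The natural approach is to write $E^{n+1}F^{n+1} = E(E^nF^n)F$ and substitute the inductive hypothesis, giving
\[
E^{n+1}F^{n+1} = E\Bigl( (EF)^n + \sum_{i=0}^{n-1} (EF)^i c_i(K,K^{-1}) \Bigr) F.
\]
I would then need to move the outer $E$ and $F$ inward and repeatedly apply the commutation relation $EF = FE + \frac{K-K^{-1}}{q-q^{-1}}$ (the $n=1$ case of Lemma \ref{lem:commutationsl2}(ii)), together with the relations $KE = q^2 EK$ and $KF = q^{-2}FK$ from Lemma \ref{lem:commutationsl2}(i), to bring everything back into the standard ordered form.

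More concretely, I would first establish a small auxiliary identity: for any $j\in\N_0$, the product $E(EF)^j F$ can be rewritten as a polynomial in $EF$ with coefficients polynomial in $K,K^{-1}$, and in fact the leading term is $(EF)^{j+1}$. This follows by noting $E(EF)^jF = (EF)^{j+1}$ after using $EF - FE = \frac{K-K^{-1}}{q-q^{-1}}$ to commute the inner factors appropriately — more carefully, one shows by a sub-induction that $E\cdot(EF)^j = (EF)^j\cdot E + (\text{lower powers of } EF \text{ times polynomials in } K^{\pm 1})\cdot E$, using that $EF$ commutes with itself and that conjugating $E$ past a factor of $(EF)$ produces terms governed by the commutator. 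Similarly $(EF)^j\cdot F$ reorders with controlled error. Applying this to each summand $E(EF)^i c_i(K,K^{-1})F$ and collecting, the top-degree contribution $E(EF)^nF$ yields $(EF)^{n+1}$ plus strictly lower-degree terms, while every other summand contributes only terms of degree $\le i+1 \le n$; all error coefficients remain Laurent polynomials in $K$ because the only "new" factors introduced are $\frac{K-K^{-1}}{q-q^{-1}}$ and powers of $q$ from the $K$-commutations.

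The main obstacle I anticipate is purely bookkeeping: keeping track of the degrees in $EF$ and verifying that the coefficient of $(EF)^{n+1}$ is exactly $1$ while all other coefficients genuinely lie in $\C[K,K^{-1}]$ rather than in some larger ring. The key structural point that makes this work is that $EF$ is central enough within the relevant computations — it commutes with itself, and conjugation by $E$ or $F$ sends $\C[K,K^{-1}]$ into itself — so no denominators or non-polynomial expressions in $K$ can appear. Once the reordering is organized as "leading term $(EF)^{n+1}$, plus lower-order terms with Laurent-polynomial coefficients," the inductive hypothesis closes the argument immediately. I would present the degree tracking via the same grouping-by-$K$-power / grouping-by-$EF$-power device used earlier in the paper, so that the write-up stays short.
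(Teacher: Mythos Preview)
Your induction is correct, but the paper organizes the inductive step more efficiently. Instead of writing $E^{n+1}F^{n+1} = E\,(E^nF^n)\,F$, substituting the hypothesis, and then running a sub-induction to control $E(EF)^jF$, the paper groups the factors as $E^{n+1}F^{n+1} = E^n\,(EF^n)\,F$ and applies Lemma~\ref{lem:commutationsl2}(ii) once to the middle block:
\[
E^{n+1}F^{n+1} \,=\, E^nF^n\cdot EF \;+\; \frac{q^n-q^{-n}}{q-q^{-1}}\,E^nF^{n-1}\,\frac{q^{1-n}K-q^{n-1}K^{-1}}{q-q^{-1}}\,F.
\]
Commuting the trailing $F$ past $K^{\pm 1}$ turns the second term into $E^nF^n$ times a Laurent polynomial in $K$, so the whole expression is $E^nF^n$ multiplied on the right by an element of $\C[EF,K,K^{-1}]$. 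Since $EF$ commutes with $K^{\pm 1}$, the induction hypothesis then applies in one line, with the leading coefficient visibly equal to~$1$. Your route reaches the same conclusion but trades this single use of Lemma~\ref{lem:commutationsl2}(ii) for an extra auxiliary induction on~$j$; the paper's grouping buys you a shorter, cleaner write-up with no separate degree-tracking argument.
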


\begin{proof} The case $n=1$ is trivial, and the induction step follows from
Lemma \ref{lem:commutationsl2}(ii);
\begin{gather*}
E^{n+1}F^{n+1} = E^n EF^{n}F = E^nF^n EF + \frac{q^{n}-q^{-n}}{q-q^{-1}} E^nF^{n-1} 
\frac{q^{1-n}K-q^{n-1}K^{-1}}{q-q^{-1}}F.
\end{gather*}
Moving $F$ through $K^{\pm 1}$, we can apply the induction hypothesis. Since $EF$ commutes with $K^{\pm 1}$, the result follows.
\end{proof}

\begin{cor}\label{cor:lem:sl2EnFn=p(EF)}
$U^0=\C[EF, K, K^{-1}]$ is a commutative algebra.
\end{cor}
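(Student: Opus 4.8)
The plan is to read the statement off directly from the two preceding lemmas, so I would only need to assemble them and then check commutativity of the generators. First I would record the easy inclusion: $EF$ lies in the centralizer $U_0$, since $\root(EF)=\al+(-\al)=0$, and $K,K^{-1}\in U^0\subseteq U_0$; hence the subalgebra generated by $EF$, $K$ and $K^{-1}$ is contained in $U_0$. These three generators moreover commute pairwise --- $K$ with $K^{-1}$ trivially, $EF$ with itself, and $EF$ with $K^{\pm1}$ precisely because $EF\in U_0$ centralizes $U^0$ --- so the algebra they generate is commutative, which settles the commutativity assertion once the identification with $U_0$ is in place.

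Next I would prove the reverse inclusion. By Lemma~\ref{lem:sl2U0} the set $\{E^nK^lF^n : n\in\N_0,\ l\in\Z\}$ is a basis of $U_0$, so it suffices to show each such basis element is a polynomial in $EF$, $K$ and $K^{-1}$. Using Lemma~\ref{lem:commutationsl2}(i) to pull $K^l$ to one side gives $E^nK^lF^n=q^{-2nl}K^lE^nF^n$, and then Lemma~\ref{lem:sl2EnFn=p(EF)} rewrites $E^nF^n$ as $(EF)^n+\sum_{i=0}^{n-1}(EF)^ic_i(K,K^{-1})$. Since $EF$ commutes with $K^{\pm1}$, the outcome is a genuine polynomial in $EF$, $K$ and $K^{-1}$, so every basis element of $U_0$ lies in $\C[EF,K,K^{-1}]$. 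Combining the two inclusions yields the claimed equality $U_0=\C[EF,K,K^{-1}]$.

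I do not expect a genuine obstacle: the statement is an immediate corollary of Lemmas~\ref{lem:sl2U0} and~\ref{lem:sl2EnFn=p(EF)}, and the only point requiring a little care is the $q$-power bookkeeping when moving $K^l$ past $E^n$ and $F^n$, which is handled at once by Lemma~\ref{lem:commutationsl2}(i). This is also exactly the $\Lg=\Lsl(2,\C)$ instance of the $U_q(\Lsl(2))$-computation invoked in the proof of Theorem~\ref{thm:commsubalgU0}.
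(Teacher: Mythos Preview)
Your argument is correct and is exactly the unpacking the paper has in mind: the corollary is stated without proof as an immediate consequence of Lemmas~\ref{lem:sl2U0} and~\ref{lem:sl2EnFn=p(EF)}, and your two-inclusion argument together with the commutation check is precisely that unpacking. Note that the statement as printed has $U^0$ where $U_0$ is meant; you have correctly read and proved the intended assertion $U_0=\C[EF,K,K^{-1}]$.
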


Note that Corollary \ref{cor:lem:sl2EnFn=p(EF)} is the special case $S=\{1\}$ in 
the notation of Theorem \ref{thm:commsubalgU0}. 

As in Section \ref{sec:Mathieumodrank1} 
we define the $1$-dimensional $U_0$-modules $\C_{\la, \mu} \cong \C$ by 
choosing $K\cdot1 = \la 1$ and $EF\cdot1 = \mu 1$, where $\la, \mu \in \C$, $\la \neq 0$.
The case $\mu=0$ corresponds to the degenerate case. 
Denote this $1$-dimensional $U_0$-representation by $\phi=\phi_{\la,\mu}$.  
We then consider the Mathieu module $M(\C_{\la, \mu}) = U_q(\Lsl(2,\C)) \otimes_{U_0} \C_{\la, \mu}$ associated to this 
$1$-dimensional $U_0$-module. We denote $1$ for the element $1\otimes 1\in M(\C_{\la, \mu})$.

\begin{prop}\label{prop:basisMatmodsl2}
The set $\{E^n\cdot1\}_{n\in\N}\cup \{1\} \cup \{ F^n\cdot1\}_{n\in \N}$ is a basis of the $U_q(\Lsl(2,\C))$-module  
$M(\C_{\la, \mu})$ and the generators  
act on elements of this basis as follows:
\begin{enumerate}[(i)]
\item $K (E^n\cdot1) = q^{2n}\la E^n\cdot1$ and  $K (F^n\cdot1) = q^{-2n}\la F^n\cdot1$ for $n\in \N_0$, 
\item $E (E^n\cdot1) = E^{n+1}\cdot1$ and $F (F^n \cdot1) = F^{n + 1}\cdot 1$ for $n\in\N_0$, 
\item $\displaystyle{E (F^n\cdot1) = \bigl( 
\mu + \frac{(q^{n-1}-q^{1-n})(q^{-n}\la-q^n\la^{-1})}{(q-q^{-1})^2}\bigr) F^{n - 1}\cdot1}$ for $n \in \N$,
\item $\displaystyle{F (E^n \cdot1) = 
\bigl( \mu - \frac{(q^{n}-q^{-n})(q^{n-1}\la-q^{1-n}\la^{-1})}{(q-q^{-1})^2}\bigr)
E^{n - 1}\cdot 1}$ for $n\in \N$.
\end{enumerate}
\end{prop}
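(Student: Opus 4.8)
The plan is to use the description of $M(\C_{\la,\mu})$ as $U \otimes_{U_0} \C_{\la,\mu}$ together with the PBW basis for $U = U_q(\Lsl(2,\C))$ and the structure of $U_0$ established in Lemma \ref{lem:sl2U0}. First I would establish the spanning and linear independence statement. Since the PBW basis of $U$ is $\{E^mK^lF^k\}$, every element of $M(\C_{\la,\mu})$ is a linear combination of elements $E^mK^lF^k \cdot 1$. Using Lemma \ref{lem:commutationsl2}(i) to move $K^l$ to the right past $F^k$ (at the cost of a power of $q$), and then noting that $K \cdot 1 = \la\, 1$, we get $E^mK^lF^k\cdot 1 = q^{2kl}\la^l\, E^mF^k\cdot 1$. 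Next I would reduce $E^mF^k \cdot 1$: if $m > k$, repeatedly apply Lemma \ref{lem:commutationsl2}(ii) (or rather its mirror) — more precisely, write $E^mF^k = E^{m-k}(E^kF^k)$, and $E^kF^k \in U_0$ acts as a scalar (a polynomial in $\la,\la^{-1},\mu$ via Lemma \ref{lem:sl2EnFn=p(EF)} and the definition of $\phi_{\la,\mu}$), so $E^mF^k\cdot 1$ is a scalar multiple of $E^{m-k}\cdot 1$. Symmetrically, if $k > m$, then $E^mF^k\cdot 1 = F^{k-m}(E^mF^m)\cdot 1$ is a scalar multiple of $F^{k-m}\cdot 1$; and if $m = k$ it is a scalar multiple of $1$. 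This shows the proposed set spans. For linear independence, I would invoke the weight space decomposition \eqref{eq:Mathieuweightmod}: by Lemma \ref{lem:sizerank1Mathieumod} (or directly) $M(\C_{\la,\mu})$ has weight spaces $M_{\la q^{2n}}$ indexed by $n \in \Z$, and one checks $E^n\cdot 1$ (for $n > 0$), $1$, and $F^n\cdot 1$ (for $n > 0$) lie in distinct weight spaces $M_{\la q^{2n}}$, $M_{\la}$, $M_{\la q^{-2n}}$ respectively; since each such weight space is at most one-dimensional (again from Lemma \ref{lem:sizerank1Mathieumod}, as $U \cong M(\C_{\la,\mu}) \otimes U_0$ forces $\dim M_{\la q^{2n}} \le 1$), and each of the listed vectors is nonzero (its image generates a nonzero weight space since $1$ generates $M(\C_{\la,\mu})$ and $E^n, F^n$ do not kill it — this needs the explicit action formulas, so is best deferred until after (i)--(iv) are proved), the set is a basis.

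Then I would verify the action formulas (i)--(iv). Formula (ii) is immediate from the module structure: $E\cdot(E^n\cdot 1) = E^{n+1}\cdot 1$ and likewise for $F$. Formula (i) follows from Lemma \ref{lem:commutationsl2}(i): $K(E^n\cdot 1) = KE^n\cdot 1 = q^{2n}E^nK\cdot 1 = q^{2n}\la\, E^n\cdot 1$, and symmetrically $K(F^n\cdot 1) = q^{-2n}\la\, F^n\cdot 1$. For (iii), I compute $E F^n \cdot 1$ using Lemma \ref{lem:commutationsl2}(ii):
\[
EF^n = F^nE + \frac{q^n-q^{-n}}{q-q^{-1}}\,F^{n-1}\,\frac{q^{1-n}K - q^{n-1}K^{-1}}{q-q^{-1}}.
\]
Applying this to $1$: the first term $F^nE\cdot 1$ — wait, $E\cdot 1 = E\cdot 1$ is not obviously a scalar, so instead I should write $EF^n \cdot 1 = EF^n\cdot(1\otimes 1)$ and use that $EF \in U_0$ acts by $\mu$. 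More carefully, $EF^n = (EF)F^{n-1} + [\text{lower } F\text{-degree corrections}]$ is not quite the cleanest; the cleanest is: $EF^n\cdot 1 = (F^{n-1})(FE)\cdot 1 + \ldots$ — actually the formula in Lemma \ref{lem:commutationsl2}(ii) already has $E$ on the right in the first term, so $F^nE\cdot 1$; but $E\cdot 1$ is genuinely a new basis vector, not a scalar. The resolution is that the left side $EF^n\cdot 1$ has weight $\la q^{-2(n-1)}$, which is a one-dimensional weight space, so $F^nE\cdot 1$ — having weight $\la q^{-2n}q^2 = \la q^{2-2n}$ — is consistent; so I simply evaluate the second term of Lemma \ref{lem:commutationsl2}(ii) on $1$ using $K\cdot 1 = \la 1$, $K^{-1}\cdot 1 = \la^{-1} 1$, getting $\frac{q^n - q^{-n}}{q-q^{-1}}\cdot\frac{q^{1-n}\la - q^{n-1}\la^{-1}}{q-q^{-1}}\, F^{n-1}\cdot 1$, and for the first term $F^nE\cdot 1 = F^{n+1}\cdot(F^{-1}E)\cdots$ — no. Let me instead note the genuinely correct route: rewrite $EF^n \cdot 1$ by first peeling off one $F$ on the right differently. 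Use $EF^n = (EF^{n-1})F$? No. The right approach: $EF^n\cdot 1$; since $EF^n \notin U_0$ in general I instead write $EF^n = F^{n-1}\cdot(EF)\cdot F^0 \cdot(\ldots)$ — the genuinely clean identity is obtained by induction or by the substitution $EF^n \cdot 1 = E F^{n} \cdot 1$ where I move $E$ past $F^n$: the first term $F^n E \cdot 1$ is a scalar times $F^{n+1}\cdot 1$? No, $F^nE\cdot 1$ has weight $\la q^{2}q^{-2n} = \la q^{2-2n}$ and this equals the weight of $F^{n-1}\cdot 1$, so indeed $F^nE\cdot 1$ lies in $\mathrm{span}(F^{n-1}\cdot 1)$; but to find the coefficient I would need another relation. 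The honest fix is: apply Lemma \ref{lem:commutationsl2}(ii) in the form where both sides are applied to $1$ and use that $F^n E \cdot 1$ can be re-expressed — actually the simplest correct derivation is just to multiply out $E^n F^n \cdot 1$ via Lemma \ref{lem:sl2EnFn=p(EF)} giving the scalar, then combine with formula (ii); but that gives $F(E^n\cdot 1)$ and $E(F^n\cdot 1)$ only after knowing the other. The genuinely standard way: prove (iii) and (iv) by a direct induction on $n$ using Lemma \ref{lem:commutationsl2}(ii),(iii), treating $E(F^n\cdot1)$ and $F(E^n\cdot 1)$ as the unknowns, with base case $n=1$ where $EF\cdot 1 = \mu\cdot 1$ and $FE\cdot 1 = (EF - \frac{K - K^{-1}}{q-q^{-1}})\cdot 1 = (\mu - \frac{\la - \la^{-1}}{q-q^{-1}})\cdot 1$, matching the claimed formulas at $n=1$.

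So concretely, for (iii): assume $E(F^{n-1}\cdot 1) = c_{n-1}F^{n-2}\cdot 1$ with $c_{n-1}$ the claimed coefficient; then $E(F^n\cdot 1) = EF\cdot(F^{n-1}\cdot 1)$, and using $EF = FE + \frac{K-K^{-1}}{q-q^{-1}}$ we get $E(F^n\cdot 1) = F(E(F^{n-1}\cdot 1)) + \frac{K-K^{-1}}{q-q^{-1}}(F^{n-1}\cdot 1) = c_{n-1}F^{n-1}\cdot 1 + \frac{q^{-2(n-1)}\la - q^{2(n-1)}\la^{-1}}{q-q^{-1}}F^{n-1}\cdot 1$ using (i) — wait, that has $F^{n-1}\cdot 1$ on the right but the claim says $F^{n-1}\cdot 1$, good, but I wrote $c_{n-1}F^{n-1}$ and $c_{n-1}$ was the coefficient for going from $F^{n-1}$ to $F^{n-2}$; this does not telescope to the product formula unless the coefficients are additive, which they are not (the claimed $c_n$ is a single product, not a sum). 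This signals that the correct recursion variable is different: one should expand $(EF)^n$-style. Let me reconsider: actually the claimed coefficient $\mu + \frac{(q^{n-1}-q^{1-n})(q^{-n}\la - q^n\la^{-1})}{(q-q^{-1})^2}$ — the $\mu$ appears linearly, not as $\mu^n$, which is consistent with $E^nF^n\cdot 1$ involving $(EF)^n \to \mu^n$ but $E(F^n\cdot 1)$ being obtained by pulling ONE $F$ leftward past $E$ in $EF^n$, i.e. $EF^n = F^nE + \frac{q^n-q^{-n}}{q-q^{-1}}F^{n-1}\frac{q^{1-n}K-q^{n-1}K^{-1}}{q-q^{-1}}$ directly. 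Applied to $1$: the term $F^nE\cdot 1$ — here $E\cdot 1$ is the basis vector $E^1\cdot 1$, and $F^nE\cdot 1 = F^n\cdot(E\cdot 1) = F^{n}E^1 \cdot 1$. Now $F^n E \cdot 1$: use Lemma \ref{lem:commutationsl2}(iii) with roles — or note $FE\cdot 1 = (\mu - \frac{\la-\la^{-1}}{q-q^{-1}})\cdot 1$, and then $F^nE\cdot 1 = F^{n-1}(FE\cdot 1) = (\mu - \frac{\la-\la^{-1}}{q-q^{-1}})F^{n-1}\cdot 1$. Then $E(F^n\cdot1) = EF^n\cdot 1 = F^nE\cdot 1 + \frac{q^n-q^{-n}}{q-q^{-1}}\cdot\frac{q^{1-n}\la - q^{n-1}\la^{-1}}{q-q^{-1}}F^{n-1}\cdot 1 = \left[(\mu - \frac{\la-\la^{-1}}{q-q^{-1}}) + \frac{(q^n-q^{-n})(q^{1-n}\la - q^{n-1}\la^{-1})}{(q-q^{-1})^2}\right]F^{n-1}\cdot 1$, and then the remaining task is the elementary algebraic identity that this bracket equals $\mu + \frac{(q^{n-1}-q^{1-n})(q^{-n}\la - q^n\la^{-1})}{(q-q^{-1})^2}$, which I would verify by expanding both sides (a routine computation I will not grind through here). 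Formula (iv) is entirely symmetric, using Lemma \ref{lem:commutationsl2}(iii) and $EF\cdot 1 = \mu\cdot 1$. The main obstacle is organizing this so the reduction of $E^mK^lF^k\cdot 1$ to the claimed basis and the coefficient bookkeeping in (iii)--(iv) are clearly correct and mutually non-circular; once the explicit actions (i)--(iv) are in hand, nonvanishing of all basis vectors and hence linear independence follow, closing the spanning argument.
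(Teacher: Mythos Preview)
Your overall approach matches the paper's: reduce PBW elements $E^mK^lF^k\cdot 1$ to the claimed basis, use weight considerations and Lemma~\ref{lem:sizerank1Mathieumod} for nonvanishing and linear independence, and derive (iii)--(iv) from Lemma~\ref{lem:commutationsl2} together with the observation that $FE\in U_0$ acts by the scalar $\mu-\frac{\la-\la^{-1}}{q-q^{-1}}$. After some false starts your final route for (iii) is exactly the paper's: apply Lemma~\ref{lem:commutationsl2}(ii) to write $EF^n\cdot 1 = F^{n-1}(FE)\cdot 1 + (\text{second term})\cdot 1$, evaluate both pieces, and simplify.

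There is, however, one genuine slip in the spanning argument. For $k>m$ you write ``$E^mF^k\cdot 1 = F^{k-m}(E^mF^m)\cdot 1$'', but this equality is false: for instance $EF^2\cdot 1$ is not $\mu\,F\cdot 1$ (compare with (iii) for $n=2$). The situation is \emph{not} symmetric to the $m>k$ case because the PBW ordering puts $E$'s on the left; you cannot simply peel $F^{k-m}$ off the left of $E^mF^k$. The paper's fix is to first rewrite $E^mK^lF^k = q^{2l(m-k)}E^mF^{k-m}K^lF^m$ and then use Lemma~\ref{lem:commutationsl2}(ii) repeatedly to commute $E^m$ past $F^{k-m}$, obtaining $E^mK^lF^k = F^{k-m}Z$ for some $Z\in U_0$ (which is \emph{not} a scalar multiple of $E^mF^m$ in general). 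With this correction your spanning argument goes through, and the rest of your proposal is sound.
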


\begin{proof}
The elements $E^n\cdot 1$, $1$ and $F^n\cdot 1$ are non-zero by Lemma 
\ref{lem:sizerank1Mathieumod}
and they are linearly independent as weight vectors for  different weights. 
To show that they span the module we consider first the case where $E^mK^lF^k$ where $m \geq k$. 
We write $E^mK^lF^k \cdot 1 = E^{m - k}E^kK^lF^k\cdot 1 = \phi(E^kK^lF^k)E^{m - k}\cdot1$. 
For $k \geq m$ the situation is slightly more complicated. 
Write $E^mK^lF^k = q^{2l(m-k)} E^mF^{k-m} K^lF^m$ and next 
use Lemma  \ref{lem:commutationsl2}(ii) repeatedly to find that $E^mK^lF^k = F^{k - m}Z$ for some $Z \in U_0$. 
Hence $E^mK^lF^k\cdot1 = \phi(Z)F^{k - m}\cdot 1$.

The action of the generators on these elements in (i) and (ii) follow.  
For (iii) we have by Lemma \ref{lem:commutationsl2},
\begin{gather*}
EF^n\cdot 1 = F^{n-1}FE\cdot 1 + \frac{(q^n-q^{-n})(q^{1-n}\la - q^{n-1}\la^{-1})}{(q-q^{-1})^2}
 F^{n-1}\cdot 1,
\end{gather*}
and using $FE\cdot 1 = EF \cdot 1 - (q-q^{-1})^{-1}(K-K^{-1})\cdot 1 = \bigl(\mu + (q-q^{-1})^{-1}(\la -\la^{-1}\bigr)\, 1$ 
we find (iii) after a straightforward calculation. The proof of (iv) is similar and slightly simpler.
\end{proof}

From Proposition \ref{prop:basisMatmodsl2} we see that the representation space has a weight space decomposition for 
the action of $K$; $M(\C_{\la, \mu}) = \bigoplus_{k\in\Z} M(\C_{\la, \mu})_{\la q^{2k}}$,
where each $M(\C_{\la, \mu})_{\la q^{2k}}$ is $1$-dimensional and spanned by $E^k\cdot 1$ if $k>0$,
by $F^k\cdot 1$ if $k<0$ and by $1$ if $k=0$. Here we use $\la q^{2k}\colon U_0\to \C$ as the 
homomorphism sending $K\mapsto \la q^{2k}$, which corresponds to $\la q^{k\al}$. 
Proposition \ref{prop:basisMatmodsl2} shows that
\begin{equation}\label{eq:EFonweightspacesMathieusl2}
E \colon M(\C_{\la, \mu})_{\la q^{2k}} \to M(\C_{\la, \mu})_{\la q^{2(k+1)}}, \qquad
F \colon M(\C_{\la, \mu})_{\la q^{2k}} \to M(\C_{\la, \mu})_{\la q^{2(k-1)}}. 
\end{equation}

Recall the Casimir element
\begin{equation}
\Om = EF + \frac{Kq^{-1} + K^{-1} q}{(q - q^{-1})^2} =
 FE + \frac{Kq + K^{-1} q^{-1}}{(q - q^{-1})^2}
\end{equation}
for $U_q(\Lsl(2,\C))$, see e.g. \cite[\S 3.1.1]{KlimS}. Then 
$\Om$ is central, and it generates the center of $U_q(\Lsl(2,\C))$. 
Using Proposition \ref{prop:basisMatmodsl2} we can calculate the action of $\Om$ on 
any basis element of the $U_q(\Lsl(2,\C))$-module  $M(\C_{\la, \mu})$.

\begin{cor}\label{cor:lem:basisMatmodsl2}
The Casimir operator $\Om$ acts as the constant $\mu + \frac{q^{-1} \la  + q \la^{-1}}{(q - q^{-1})^2}$ 
times the identity 
on the $U_q(\Lsl(2,\C))$-module  
$M(\C_{\la, \mu})$. 
\end{cor}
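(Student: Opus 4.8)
The plan is to use the fact that the Casimir $\Om$ is central, hence acts on the cyclic $U_q(\Lsl(2,\C))$-module $M(\C_{\la,\mu})$ by a scalar on each generator's image, and because $M(\C_{\la,\mu})$ is generated by $1 = 1\otimes 1$, it suffices to compute $\Om\cdot 1$. First I would observe that, since $\Om$ commutes with all of $U_q(\Lsl(2,\C))$, for any $X\in U_q(\Lsl(2,\C))$ we have $\Om\cdot(X\cdot 1) = X\cdot(\Om\cdot 1)$; so if $\Om\cdot 1 = c\,1$ for a scalar $c$, then $\Om\cdot(X\cdot 1) = c\,(X\cdot 1)$ for all $X$, and since $\{E^n\cdot 1\}\cup\{1\}\cup\{F^n\cdot 1\}$ spans $M(\C_{\la,\mu})$ by Proposition \ref{prop:basisMatmodsl2}, $\Om$ acts as $c$ times the identity.

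Next I would compute $\Om\cdot 1$ directly using the expression $\Om = EF + \frac{Kq^{-1}+K^{-1}q}{(q-q^{-1})^2}$ and the action described in Proposition \ref{prop:basisMatmodsl2}(i),(iii). We have $EF\cdot 1 = \mu\, 1$ (this is the $n=1$ case of (iii), or simply the definition of $\phi_{\la,\mu}$ since $EF\in U_0$), and $K\cdot 1 = \la\, 1$, $K^{-1}\cdot 1 = \la^{-1}\,1$. Hence
\[
\Om\cdot 1 = EF\cdot 1 + \frac{q^{-1}K + qK^{-1}}{(q-q^{-1})^2}\cdot 1
= \Bigl(\mu + \frac{q^{-1}\la + q\la^{-1}}{(q-q^{-1})^2}\Bigr)\,1,
\]
which identifies the constant $c = \mu + \frac{q^{-1}\la + q\la^{-1}}{(q-q^{-1})^2}$ as claimed.

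I don't anticipate a genuine obstacle here; the only point requiring a line of care is justifying that a central element acting on a cyclic module is a scalar — this is immediate from $\Om(X\cdot 1) = X(\Om\cdot 1)$ together with cyclicity, but one should state it rather than leave it implicit. One could alternatively verify the formula on every basis vector $E^n\cdot 1$ and $F^n\cdot 1$ using parts (iii) and (iv) of Proposition \ref{prop:basisMatmodsl2} and the two expressions for $\Om$ (using $\Om = FE + \frac{Kq+K^{-1}q^{-1}}{(q-q^{-1})^2}$ when it is convenient to push $E$ or $F$ to the right first), which would be a purely computational cross-check but is unnecessary given the centrality argument.
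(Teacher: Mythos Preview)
Your proposal is correct and matches the paper's (implicit) argument: the paper gives no explicit proof beyond the remark that one can compute the action of $\Om$ on basis elements using Proposition~\ref{prop:basisMatmodsl2}, and your computation of $\Om\cdot 1$ together with the centrality/cyclicity observation is exactly the clean way to carry this out. The alternative you mention---checking each basis vector directly via parts (iii) and (iv)---is what the paper's phrasing might literally suggest, but as you note it is redundant once centrality is invoked.
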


\subsection{Reducibility}\label{ssec:reducibility}

The Mathieu module $M(\C_{\la, \mu})$ is admissible in the sense of 
Definition \ref{def:admissibletypeI}, since it has a weight space decomposition 
with finite-dimensional weight spaces.
Hence, in case $M(\C_{\la, \mu})$ is reducible, a non-trivial invariant subspace has a weight space decomposition.
Since the weight spaces are $1$-dimensional, we can only have a non-trivial invariant subspace in 
case $E$, respectively $F$, kills a weight space. 
From Proposition \ref{prop:basisMatmodsl2} we see that this can only happen in cases (iii) and (iv). 

In case (iii), $E$ kills a weight space if there exists  $n_E\in\N$ with 
\begin{equation}\label{eq:E-reducibility}
(q^{n_E-1}-q^{n_E-1})(q^{-n_E}\la-q^{n_E}\la^{-1})
+ (q - q^{-1})^2\mu = 0,
\end{equation}
and then $E\cdot(F^{n_E}\cdot1)=0$. 
Note that for fixed $\la$ and $\mu$, at most one solution $n_E\in \N$ for \eqref{eq:E-reducibility} exists. 
In this case the submodule $M^-_{n_E} = \bigoplus_{k\leq - n_E} M(\C_{\la, \mu})_{\la q^{2k}}$,
being the span of $F^{n_E+p}\cdot 1$, $p\in \N_0$,  is invariant. 
The spectrum of $K$ on the invariant subspace is $\la q^{-2n_E-2\N_0}$, so that 
we can consider $M^-_{n_E}$ as a highest weight representation. 

Similarly, in case (iv), we only get a zero action by $F$ on $E^n\cdot 1$ if there exists $n_F\in \N$ so that 
\begin{equation}
\label{eq:F-reducibility}
(q^{n_F}-q^{-n_F})(q^{n_F-1}\la-q^{1-n_F}\la^{-1})
- (q - q^{-1})^2\mu = 0,
\end{equation}
so that $F\cdot(E^{n_F}\cdot1)=0$. 
Again, there is at most one solution of \eqref{eq:F-reducibility} in $\N$.   
The submodule $M^+_{n_F} = \bigoplus_{k\geq n_F} M(\C_{\la, \mu})_{\la q^{2k}}$,
being the span of $E^{n_F+p}\cdot 1$, $p\in \N_0$,  is invariant. 
The spectrum of $K$ on the invariant subspace is $\la q^{2n_F+2\N_0}$, so that 
we can consider $M^+_{n_E}$ as a lowest weight representation. 

These considerations prove the first part of Proposition \ref{prop:reducibility}. 

\begin{prop}\label{prop:reducibility} The Mathieu module $M(\C_{\la, \mu})$ 
is generically irreducible. More precisely, assume that 
\eqref{eq:F-reducibility} has no solution $n_F\in \N$ and that 
\eqref{eq:E-reducibility} has no solution $n_E\in \N$, then $M(\C_{\la, \mu})$ 
is irreducible. 
Conversely, if $M(\C_{\la, \mu})$ is irreducible, then 
\eqref{eq:F-reducibility} has no solution $n_F\in \N$ and  
\eqref{eq:E-reducibility} has no solution $n_E\in \N$. 
\end{prop}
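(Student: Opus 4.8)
The plan is to prove the two directions of Proposition \ref{prop:reducibility} essentially by contrapositive, using the structural facts already established in Section \ref{ssec:reducibility}. The first half (the converse of generic irreducibility) is already accounted for by the discussion preceding the statement: if \eqref{eq:E-reducibility} has a solution $n_E\in\N$ then $M^-_{n_E}$ is a non-trivial proper invariant subspace, and if \eqref{eq:F-reducibility} has a solution $n_F\in\N$ then $M^+_{n_F}$ is one; so I would simply refer back to that construction and note that in either case $M(\C_{\la,\mu})$ is reducible. The only thing to double-check is that $M^-_{n_E}$ and $M^+_{n_F}$ are indeed \emph{proper} and \emph{non-zero}: non-zero because each weight space is one-dimensional and spanned by a genuine basis vector $F^{n_E+p}\cdot1$ or $E^{n_F+p}\cdot1$ (Proposition \ref{prop:basisMatmodsl2} and Lemma \ref{lem:sizerank1Mathieumod}), and proper because they miss the weight space $M(\C_{\la,\mu})_{\la}$ spanned by $1$.

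For the forward direction — no solutions implies irreducibility — I would argue by contradiction. Suppose $W\subset M(\C_{\la,\mu})$ is a non-trivial proper invariant subspace. Since $M(\C_{\la,\mu})$ is an admissible weight module with one-dimensional weight spaces (by Proposition \ref{prop:basisMatmodsl2}), $W$ is itself a weight module, so $W=\bigoplus_{k\in T} M(\C_{\la,\mu})_{\la q^{2k}}$ for some non-empty proper subset $T\subsetneq\Z$. Pick $k_0\in T$; applying $E$ repeatedly and using \eqref{eq:EFonweightspacesMathieusl2} together with parts (ii) and (iii) of Proposition \ref{prop:basisMatmodsl2}, the only way $E$ fails to move $M(\C_{\la,\mu})_{\la q^{2k}}$ into $M(\C_{\la,\mu})_{\la q^{2(k+1)}}$ bijectively is if the scalar in (iii) vanishes, i.e. exactly at a solution of \eqref{eq:E-reducibility}; since there is none, $E$ maps each weight space isomorphically onto the next one up, so $k\in T\Rightarrow k+1\in T$. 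Symmetrically, using part (iv) and the absence of solutions to \eqref{eq:F-reducibility}, $F$ maps each weight space isomorphically onto the next one down, so $k\in T\Rightarrow k-1\in T$. Hence $T=\Z$, contradicting properness, and $M(\C_{\la,\mu})$ is irreducible.

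I do not expect a serious obstacle here; the proposition is really a bookkeeping consequence of Proposition \ref{prop:basisMatmodsl2} and the one-dimensionality of the weight spaces. The one point requiring a little care is the claim that at most one $n_E$ (resp. $n_F$) can solve \eqref{eq:E-reducibility} (resp. \eqref{eq:F-reducibility}) — already asserted in the text — which matters only for the converse direction if one wants to describe all invariant subspaces, but is not strictly needed for the dichotomy as stated. I would write the proof in roughly this shape:

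\begin{proof}
The converse statement has already been established in the discussion preceding the proposition: if $n_E\in\N$ solves \eqref{eq:E-reducibility}, then by Proposition \ref{prop:basisMatmodsl2}(iii) we have $E\cdot(F^{n_E}\cdot1)=0$, and together with \eqref{eq:EFonweightspacesMathieusl2} the subspace $M^-_{n_E}=\bigoplus_{k\leq -n_E}M(\C_{\la,\mu})_{\la q^{2k}}$ is invariant; it is non-zero since its weight spaces are spanned by the non-zero vectors $F^{n_E+p}\cdot1$, $p\in\N_0$ (Lemma \ref{lem:sizerank1Mathieumod}), and proper since it does not contain $1\in M(\C_{\la,\mu})_{\la}$. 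Hence $M(\C_{\la,\mu})$ is reducible. The case of a solution $n_F\in\N$ of \eqref{eq:F-reducibility} is entirely analogous, using $M^+_{n_F}=\bigoplus_{k\geq n_F}M(\C_{\la,\mu})_{\la q^{2k}}$.

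For the direct statement, assume \eqref{eq:E-reducibility} and \eqref{eq:F-reducibility} have no solution in $\N$, and let $W$ be a non-trivial invariant subspace of $M(\C_{\la,\mu})$. As $M(\C_{\la,\mu})$ is a weight module with one-dimensional weight spaces, so is $W$, and thus $W=\bigoplus_{k\in T}M(\C_{\la,\mu})_{\la q^{2k}}$ for some non-empty $T\subset\Z$. Fix $k\in T$. If $k\geq 0$, then by Proposition \ref{prop:basisMatmodsl2}(ii) the vector $E^{k+1}\cdot1$ spans $M(\C_{\la,\mu})_{\la q^{2(k+1)}}$ and lies in $W$; if $k<0$, then $M(\C_{\la,\mu})_{\la q^{2k}}$ is spanned by $F^{-k}\cdot1$, and by Proposition \ref{prop:basisMatmodsl2}(iii) the scalar $\mu+\frac{(q^{-k-1}-q^{k+1})(q^{k}\la-q^{-k}\la^{-1})}{(q-q^{-1})^2}$ multiplying $F^{-k-1}\cdot1$ in $E\cdot(F^{-k}\cdot1)$ is non-zero, because its vanishing would be exactly equation \eqref{eq:E-reducibility} with $n_E=-k\in\N$. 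In either case $M(\C_{\la,\mu})_{\la q^{2(k+1)}}\subset W$, so $k+1\in T$. Symmetrically, using Proposition \ref{prop:basisMatmodsl2}(ii) when $k\leq 0$ and Proposition \ref{prop:basisMatmodsl2}(iv) when $k>0$ (the relevant scalar being non-zero since its vanishing is \eqref{eq:F-reducibility} with $n_F=k\in\N$), we get $k-1\in T$. Therefore $T=\Z$ and $W=M(\C_{\la,\mu})$, so the module is irreducible.
\end{proof}
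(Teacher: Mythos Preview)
Your proof is correct and the underlying ideas are the same as the paper's: one-dimensional weight spaces force any invariant subspace to be a union of weight spaces, and $E$, $F$ shift between adjacent weight spaces by the explicit scalars of Proposition~\ref{prop:basisMatmodsl2}(iii),(iv), which vanish precisely at solutions of \eqref{eq:E-reducibility}, \eqref{eq:F-reducibility}.

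There is a small organizational difference worth noting. The paper treats the forward direction (no solutions $\Rightarrow$ irreducible) as already established by the discussion preceding the proposition, and devotes its formal proof to the \emph{converse}, arguing directly from irreducibility: since $U_{k\alpha}=E^kU_0$, irreducibility forces each composite $E^k$ between the relevant weight spaces to be nonzero, hence each single $E$-step is nonzero, hence \eqref{eq:E-reducibility} has no solution (and symmetrically for $F$). You instead prove the converse by contrapositive, exhibiting $M^-_{n_E}$ or $M^+_{n_F}$ as a proper nonzero submodule when a solution exists, and give the forward direction a careful direct argument. Both routes are equally short; your version has the advantage of being self-contained and avoids the slightly awkward indexing in the paper's direct argument for the converse.
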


\begin{proof}
It remains to prove the converse statement. Since $M(\C_{\la, \mu})$ is the 
sum of the weight spaces, and, using the PBW basis, the only elements in 
$U_q(\Lsl(2,\C))$ mapping $M(\C_{\la, \mu})_{\la}$ to 
$M(\C_{\la, \mu})_{\la q^{2k}}$ ($k\geq 0$) are elements from 
$E^k U_0$. By irreducibility, the map $E^k$ has to be non-zero, and by 
\eqref{eq:EFonweightspacesMathieusl2}, we see that each 
$E \colon M(\C_{\la, \mu})_{\la q^{2p}} \to M(\C_{\la, \mu})_{\la q^{2(p+1)}}$
for $0\leq p < k$ has to be non-zero. Since $k$ is arbitrary, we find that 
\eqref{eq:E-reducibility} has no solution $n_E\in \N$.

The statement for \eqref{eq:F-reducibility} is proved similarly. 
\end{proof}

In case there exists a $n_E\in \N$ satisfying \eqref{eq:E-reducibility} and there 
exists no $n_F\in \N$ satisfying \eqref{eq:F-reducibility}, the quotient
$M(\C_{\la, \mu})/M^-_{n_E}$ gives an irreducible $U_q(\Lsl(2,\C))$-representation,
which we can view as a lowest weight module with lowest weight 
$\la q^{2-2n_E}$. Similarly, 
in case there exists a $n_F\in \N$ satisfying \eqref{eq:F-reducibility} and there 
exists no $n_F\in \N$ satisfying \eqref{eq:E-reducibility}, the quotient
$M(\C_{\la, \mu})/M^+_{n_F}$ gives an irreducible $U_q(\Lsl(2,\C))$-representation,
which we can view as a highest weight module with highest weight 
$\la q^{2n_F-2}$.
In case there exists a solution $n_E\in \N$ to \eqref{eq:E-reducibility}
and a solution $n_F\in \N$ to \eqref{eq:F-reducibility}, then the 
$M(\C_{\la, \mu})/\bigl(M^-_{n_E} + M^+_{n_F}\bigr)$ is a finite-dimensional 
irreducible $U_q(\Lsl(2,\C))$-representation.

\subsection{Equivalence}\label{ssec:equivalence}

In general the equivalence question for general Mathieu modules seems to be difficult.
For the case of $U_q(\Lsl(2,\C))$ and irreducible Mathieu modules, it is possible to 
describe it in detail.

\begin{prop}\label{prop:equivalenceirrMathieumodulessl2}
Assume that $M(\C_{\la, \mu})$ and $M(\C_{\la', \mu'})$ are irreducible Mathieu modules. 
Then $M(\C_{\la, \mu}) \cong M(\C_{\la', \mu'})$ if and only if 
there exists $n\in \Z$ with $\la'=\la q^{2n}$ and 
\[
\mu'=\mu -  \frac{(q^n-q^{-n})(\la q^{n-1}-\la^{-1}q^{1-n})}{(q-q^{-1})^2}.
\]
\end{prop}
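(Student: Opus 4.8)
The plan is to prove the two implications separately, using the explicit weight-space picture from Proposition~\ref{prop:basisMatmodsl2} together with the action of the Casimir element $\Om$ from Corollary~\ref{cor:lem:basisMatmodsl2}.

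For the ``if'' direction, suppose $\la'=\la q^{2n}$ and $\mu'$ is given by the displayed formula. First I would observe that both modules, being irreducible admissible weight modules, are determined by their $K$-spectrum and the scalar by which $\Om$ acts: indeed if $W,W'$ are irreducible with the same $K$-spectrum (a single coset $\la q^{2\Z}$) and the same Casimir eigenvalue, one can build an explicit isomorphism by sending a weight vector $w$ of weight $\la q^{2m}$ in $W$ to the weight vector of the same weight in $W'$ and checking, via Proposition~\ref{prop:basisMatmodsl2}(iii)--(iv), that the rescaled basis intertwines $E$ and $F$; the scalars match precisely because $\Om$ acts by the same constant and each weight space is one-dimensional, so the product of the ``$E$-coefficient'' and the ``$F$-coefficient'' between consecutive weight spaces equals $(q-q^{-1})^2(\Om\text{-eigenvalue}) - (\text{a function of the weight})$, which is determined. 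So it remains to check that $\la q^{2\Z}=\la' q^{2\Z}$ (immediate from $\la'=\la q^{2n}$) and that the Casimir eigenvalues agree. Using Corollary~\ref{cor:lem:basisMatmodsl2}, the latter is the identity
\[
\mu + \frac{q^{-1}\la + q\la^{-1}}{(q-q^{-1})^2} = \mu' + \frac{q^{-1}\la' + q\la'^{-1}}{(q-q^{-1})^2},
\]
and substituting $\la'=\la q^{2n}$ and simplifying shows this is equivalent to the stated formula for $\mu'$ --- a routine computation.

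For the ``only if'' direction, suppose $\Psi\colon M(\C_{\la,\mu})\to M(\C_{\la',\mu'})$ is an isomorphism. Since $\Psi$ intertwines the $U^0$-action and both modules are weight modules with one-dimensional weight spaces, $\Psi$ must carry the weight-$\la$ space to a nonzero weight space of $M(\C_{\la',\mu'})$, whose weights are $\la' q^{2\Z}$; hence $\la = \la' q^{2n}$ for some $n\in\Z$, i.e. $\la'=\la q^{-2n}$ (reindex $n\mapsto -n$ to match the statement). Then $\Psi$ also intertwines the central element $\Om$, so the Casimir eigenvalues coincide, and by Corollary~\ref{cor:lem:basisMatmodsl2} we again get the displayed equation relating $\mu,\mu',\la,\la'$, from which the formula for $\mu'$ follows by the same algebra as above.

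The main obstacle --- and the only genuinely non-formal point --- is establishing the converse classification statement used in the ``if'' direction: that an irreducible admissible type-I weight module of $U_q(\Lsl(2,\C))$ with a prescribed single $K$-coset and prescribed Casimir eigenvalue is unique up to isomorphism. I expect to handle this directly rather than by citation: pick weight vectors normalized so that $E$ acts as in Proposition~\ref{prop:basisMatmodsl2}(ii) wherever possible; the one-dimensionality of weight spaces forces $F$ to act by a scalar on each, and the relation $EF-FE = (q-q^{-1})^{-1}(K-K^{-1})$ together with the value of $\Om$ pins down all these scalars recursively, so the module is rigid. Care is needed at the ``edges'' if $E$ or $F$ annihilates some weight space --- but by hypothesis the relevant modules are irreducible, so by Proposition~\ref{prop:reducibility} no such annihilation occurs, and the recursion runs over all of $\Z$ without obstruction. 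Everything else is bookkeeping with the explicit formulas already in the excerpt.
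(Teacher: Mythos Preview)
Your ``only if'' direction matches the paper's argument exactly: compare $K$-spectra to get $\la'=\la q^{2n}$, then equate Casimir eigenvalues via Corollary~\ref{cor:lem:basisMatmodsl2}.

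For the ``if'' direction you take a genuinely different route. The paper does not argue via a rigidity/classification statement for irreducible weight modules; instead it invokes the universal property (Lemma~\ref{lem:universalproperty}) directly. Concretely, it defines a $U_0$-module map $\psi\colon \C_{\la,\mu}\to M(\C_{\la',\mu'})$ sending $1_{(\la,\mu)}$ to $F^n\cdot 1_{(\la',\mu')}$ (or $E^{-n}\cdot 1_{(\la',\mu')}$ if $n<0$), verifies by a one-line computation using Proposition~\ref{prop:basisMatmodsl2} that this target vector has the correct $K$- and $EF$-eigenvalues, and then Lemma~\ref{lem:universalproperty} immediately produces a nonzero $U$-intertwiner $\Psi\colon M(\C_{\la,\mu})\to M(\C_{\la',\mu'})$; irreducibility of both sides finishes the argument. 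Your approach --- renormalizing a basis so $E$ acts by shift and then pinning down the $F$-scalars via the Casimir --- is correct and self-contained, and has the merit of not relying on the induced-module machinery. The paper's approach is shorter and stays within the Mathieu-module framework it has already set up, avoiding the need to prove the separate uniqueness lemma you identify as the ``main obstacle''.
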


\begin{proof} 
Assume first that the modules are equivalent. Since the spectrum of $K$ in both modules has to be equal, we find
$\la q^{2\Z}= \la' q^{2\Z}$. Hence, there exists $n\in\Z$ with $\la'=\la q^{2n}$. 
By considering the action of the Casimir element $\Omega$, Corollary \ref{cor:lem:basisMatmodsl2} gives the 
relation between $\mu$ and $\mu'$.

To prove the converse, we use Lemma \ref{lem:universalproperty}. Let $W=M(\C_{\la', \mu'})$. 
Let $\tilde{V}= \C_{(\la,\mu)} \cong \C\cdot 1_{(\la,\mu)} \subset 
M(\C_{\la, \mu})$, stressing the dependence on $(\la,\mu)$. Then we define 
\begin{equation*}
\begin{split}
&\psi \colon   \C\cdot 1_{(\la,\mu)} \to \C F^n\cdot 1_{(\la',\mu')}, 
\quad 1_{(\la,\mu)} \mapsto  F^n\cdot 1_{(\la',\mu')}, 
\qquad n\in \N_0, \\
&\psi \colon   \C\cdot 1_{(\la,\mu)} \to \C E^{-n}\cdot 1_{(\la',\mu')}, 
\quad 1_{(\la,\mu)} \mapsto  E^{-n}\cdot 1_{(\la',\mu')}, 
\qquad -n\in \N. \\
\end{split}
\end{equation*}
By a straightforward calculation using Proposition \ref{prop:basisMatmodsl2} we see that $\psi$ intertwines the 
action of $K$ and $EF$. Then $V$, the image of $\psi$, is a $U_0$-submodule of $M(\C_{\la, \mu})$.
Lemma \ref{lem:universalproperty} gives an intertwiner 
$\Psi\colon M(\tilde{V})=M(\C_{\la, \mu}) \to W=M(\C_{\la', \mu'})$,
which is non-zero, since it extends the non-zero map $\psi$. Since 
$M(\C_{\la, \mu})$ and $M(\C_{\la', \mu'})$ are irreducible, they are equivalent. 
\end{proof}

\begin{rmk}\label{rmk:prop:traceequivalenceirrMathieumodulessl2}
Introduce the map $\Tr(M) \colon (U^0)^\ast \times U_0 \to \C$ for a weight module $M=\bigoplus_\la M_\la$ 
by $\Tr(M)(\la,X)= \Tr(X\vert_{M_\la})$, see Mathieu \cite[\S 2]{Math}. 
Using Proposition \ref{prop:basisMatmodsl2} we obtain 
\begin{gather*}
\Tr(M(\C_{\la, \mu}))(\la q^{2k}, (EF)^jK^l) =
(\la q^{2k})^l \left( \mu - \frac{(q^k-q^{-k})(q^{k-1}\la-q^{1-k}\la^{-1})}{(q-q^{-1})^2}\right)^j
\end{gather*}
for $j\in \N_0$, $l\in \Z$, $k\in\Z$. By the explicit expression we see that 
$\Tr(M(\C_{\la, \mu}))=\Tr(M(\C_{\la', \mu'}))$ if and only if $(\la,\mu)$ and $(\la',\mu')$ are
related as in Proposition \ref{prop:equivalenceirrMathieumodulessl2}. 
\end{rmk}

\begin{rmk}\label{rmk:prop:equivalenceirrMathieumodulessl2}
The proof of Proposition \ref{prop:equivalenceirrMathieumodulessl2} is in case of irreducible Mathieu modules.
It is straightforward to write down the intertwiner explicitly. E.g. in case $n\in \N_0$ we have
\begin{gather*}
\Psi(E^k\cdot 1_{(\la,\mu)})= (E^kF^n)\cdot 1_{(\la',\mu')}, \qquad
\Psi(F^k\cdot 1_{(\la,\mu)})= F^{k+n}\cdot 1_{(\la',\mu')}.
\end{gather*}
Assuming $(\la,\mu)$ and $(\la',\mu')$ related as in Proposition \ref{prop:equivalenceirrMathieumodulessl2}
we can check that $\Psi$ intertwines the action using 
Proposition \ref{prop:basisMatmodsl2} directly. There are two non-trivial relations to check, namely
$\Psi(E(F^k\cdot 1_{(\la,\mu)}))=E\cdot \Psi(F^k\cdot 1_{(\la,\mu)})$ and
$\Psi(F(E^k\cdot 1_{(\la,\mu)}))=F\cdot \Psi(E^k\cdot 1_{(\la,\mu)})$. 
In the first case, Proposition \ref{prop:basisMatmodsl2} and the relation between $(\la,\mu)$ and $(\la',\mu')$
give the result. In the second case, the left hand side follows from Proposition 
\ref{prop:basisMatmodsl2}. For the right hand side we use Lemma \ref{lem:commutationsl2}(ii), (iii)
to write 
\begin{gather*}
FE^kF^n = E^{k-1}EF^{n}F-\frac{q^k-q^{-k}}{q-q^{-1}}E^{k-1}\frac{q^{k-1}K-q^{1-k}K^{-1}}{q-q^{-1}}F^n,  \\
 E^{k-1}EF^{n}F =  E^{k-1}F^{n}(EF) +
 E^{k-1} \frac{q^n-q^{-n}}{q-q^{-1}} F^{n-1} \frac{q^{1-n}K -q^{n-1}K^{-1}}{q-q^{-1}}F.
\end{gather*}
Then $F\cdot \Psi(E^k\cdot 1_{(\la,\mu)})= FE^kF^n\cdot 1_{(\la',\mu')}$ can be calculated directly in terms of $(\la',\mu')$. Using the relation between $(\la,\mu)$ and $(\la',\mu')$ then shows equality 
with the left hand side. Similarly, we have an explicit intertwiner for $-n\in \N$. 

Now the transition $(\la,\mu)\mapsto (\la',\mu')$ is invertible, and of the same type, i.e. 
$\la=\la' q^{-2n}$ and 
\[
\mu=\mu' -  \frac{(q^{-n}-q^{n})(\la' q^{-n-1}-(\la')^{-1}q^{1+n})}{(q-q^{-1})^2}.
\]
So, we then similarly find an intertwiner $\Psi'\colon M(\C_{\la',\mu'})\to M(\C_{\la,\mu})$. By 
considering the action on each of the basis vectors, we can obtain $\Psi'\circ \Psi = \phi_{\la,\mu}(F^nE^n) \textrm{Id}$. We will not use this result, and we skip its proof.
\end{rmk}

\subsection{Unitarizability}\label{ssec:unitarizability}

Next we consider which Mathieu modules for $U_q(\Lsl(2,\C))$ can be made into 
unitary representations for the $\ast$-structure \eqref{eq:starforUsu11} 
corresponding to the quantized universal enveloping algebra $U_q(\Lsu(1, 1))$. 
Recall that we assume $0<q<1$.

Observe that $K^\ast = K$ and $EF = -EE^\ast K^{-1}$, so that acting on $1\in M(\C_{\la, \mu})$
and recalling that $EE^\ast$ is a positive operator, we find the necessary conditions
\begin{equation}\label{eq:unitary-necessarycond}
\la \in\R\setminus\{0\}, \qquad \mu\la < 0,
\end{equation}
for $M(\C_{\la, \mu})$ to be unitary. 
In particular, for type I representations, see Definition \ref{def:admissibletypeI},  
we need $\la>0$ and $\mu < 0$,
see Section \ref{sec:Mathieumodrank1} and Proposition \ref{prop:unitaryMathieumod}. 

Since the basis vectors of Proposition \ref{prop:basisMatmodsl2} are eigenvectors of the 
(formally) self-adjoint operator $K$ for different eigenvalues, this constitutes 
an orthogonal basis in case $M(\C_{\la, \mu})$ is unitary. 
See Schm\"udgen \cite[Ch.~8]{Schm} for the notion of a representation by unbounded operators, where in 
this case the common domain is the finite linear combinations of the basis vectors of 
Proposition \ref{prop:basisMatmodsl2}. 

Assume $M(\C_{\la, \mu})$ is a unitary module for $U_q(\Lsu(1,1))$ with respect to 
the inner product $\langle \cdot | \cdot \rangle$. 
We derive a recursive expression; take $n>0$ and 
\begin{gather*}
\langle E^n \cdot 1 | E^n \cdot 1\rangle = \langle E^\ast E^n \cdot 1 | E^{n-1} \cdot 1\rangle =
- \langle F K E^n \cdot 1 | E^{n-1} \cdot 1\rangle = \\ 
- q^{2n}\la  
\bigl(
\mu - \frac{(q^{n}-q^{-n})(q^{n-1}\la-q^{1-n}\la^{-1})}{(q-q^{-1})^2}
\bigr) 
\langle E^{n-1} \cdot 1 | E^{n-1} \cdot 1\rangle 
\end{gather*}
using Proposition \ref{prop:basisMatmodsl2}. 
This is a simple recursion, and we find, using $\mu =M/(q - q^{-1})^2$ 
and normalizing $\langle 1 | 1\rangle=1$, 
\begin{equation*}
\langle E^n \cdot 1 | E^n \cdot 1\rangle 
= \left( \frac{q}{(q-q^{-1})^2}\right)^n \prod_{k=0}^{n-1}   
\bigl(1 - (\la^2+ q^2 + qM\la) q^{2k} + \la^2q^2 q^{4k}\bigr)
\end{equation*}
so that $\langle E^n \cdot 1 | E^n \cdot 1\rangle>0$ for all $n\in\N$ if and only if 
$1 - (\la^2+ q^2 + qM\la)x + \la^2 q^2x^2>0$ for 
all $x\in q^{2\N_0}$. 

Similarly, we find 
\begin{gather*}
\langle F^n \cdot 1 | F^n \cdot 1\rangle = 
- \langle K^{-1} E F^n \cdot 1 | F^{n-1} \cdot 1\rangle = \\ 
- \la^{-1} q^{2(n-1) }
\bigl(
\mu + \frac{(q^{n-1}-q^{1-n})(q^{-n}\la-q^n\la^{-1})}{(q-q^{-1})^2}
\bigr)
\langle F^{n-1} \cdot 1 | F^{n-1} \cdot 1\rangle. 
\end{gather*}
and hence
\begin{equation*}
\langle F^n \cdot 1 | F^n \cdot 1\rangle =  \left( \frac{q^{-1}}{(q-q^{-1})^2} \right)^n \prod_{k=0}^{n-1}
\bigl( 1 - (q^2\la^{-2} + 1 +\frac{qM}{\la}) q^{2k} + \frac{q^2}{ \la^2}q^{4k} \bigr).
\end{equation*}
The considerations for the positivity of 
$\langle E^n \cdot 1 | E^n \cdot 1\rangle$ and 
$\langle F^n \cdot 1 | F^n \cdot 1\rangle$ for all $n\in\N$ lead
to Theorem \ref{thm:unitarityirrepMathieumodule}. 

\begin{thm}\label{thm:unitarityirrepMathieumodule}
Assume $M(\C_{\la, \mu})$ is an irreducible Mathieu module. 
Then $M(\C_{\la, \mu})$ is unitarizable for the 
$\ast$-structure \eqref{eq:starforUsu11} for $U_q(\Lsu(1,1))$ if and only if
$\la \in \R\setminus\{0\}$ and $\la\mu <0$ and, relabeling $\mu=M/(q-q^{-1})^2$, 
\begin{gather*}
1 - (\la^2 + q^2 + qM\la)x +  q^2\la^2x^2>0, \quad \forall x\in q^{2\N_0}, \\
1 - (q^2\la^{-2} + 1  +\frac{qM}{\la}) x + \frac{q^2}{ \la^2}x^2>0, \quad \forall x\in q^{2\N_0}.
\end{gather*}
In this case, the basis of Proposition \ref{prop:basisMatmodsl2} is orthogonal, with squared norms given by 
$\langle 1|1\rangle =1$ and 
\begin{gather*}
\langle F^n \cdot 1 | F^n \cdot 1\rangle = 
\left( \frac{q^{-1}}{(q-q^{-1})^2} \right)^n \prod_{k=1}^n
\bigl( 1 - (q^2\la^{-2} + 1  +\frac{qM}{\la}) q^{2k} + \frac{q^2 }{\la^2}q^{4k} \bigr), \\
\langle E^n \cdot 1 | E^n \cdot 1\rangle = \left( \frac{q}{(q-q^{-1})^2}\right)^n \prod_{k=1}^{n}   
\bigl(1 - (\la^2 + q^2 + qM\la) q^{2k} + q^2 \la^2 q^{4k}\bigr). 
\end{gather*}
\end{thm}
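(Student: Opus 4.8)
The plan is to reduce the theorem to the positivity of the two explicit sequences of squared norms, since once we know the inner product is positive definite on an orthogonal basis, unitarity is automatic. First I would record the necessary conditions already derived just before the statement: acting on $1\in M(\C_{\la,\mu})$ with $EF=-EE^\ast K^{-1}$ and using that $EE^\ast$ is positive forces $\la\in\R\setminus\{0\}$ and $\la\mu<0$, which is \eqref{eq:unitary-necessarycond}; this handles the ``only if'' direction for the linear constraints on $(\la,\mu)$. Then I would invoke the two recursions computed above for $\langle E^n\cdot 1\mid E^n\cdot 1\rangle$ and $\langle F^n\cdot 1\mid F^n\cdot 1\rangle$: iterating them from the normalization $\langle 1\mid 1\rangle=1$ yields the closed product formulas, and rewriting $\mu=M/(q-q^{-1})^2$ turns each factor into the quadratic expression $1-(\la^2+q^2+qM\la)q^{2k}+q^2\la^2 q^{4k}$ (resp.\ the $F$-analogue with $\la$ replaced by $q/\la$ up to the relevant bookkeeping). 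Since $q^n/(q-q^{-1})^{2n}>0$ for $0<q<1$, positivity of the $n$-th squared norm for all $n$ is equivalent to each quadratic factor being positive at every point of $q^{2\N_0}$, which is exactly the stated condition.

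Next I would argue the equivalence between ``the basis has all positive squared norms'' and ``$M(\C_{\la,\mu})$ is unitarizable.'' For the ``if'' direction: the basis vectors of Proposition \ref{prop:basisMatmodsl2} are eigenvectors of the formally self-adjoint operator $K$ for distinct eigenvalues $\la q^{2k}$, so declaring them mutually orthogonal with the computed squared norms gives a well-defined positive definite Hermitian form on the common domain of finite linear combinations; one then checks that the actions of $E$, $F$, $K$ satisfy the adjoint relations \eqref{eq:starforUsu11} with respect to this form, which is a direct computation from the recursions (indeed the recursions were \emph{derived} from these adjoint relations, so they are consistent by construction). For the ``only if'' direction, if $M(\C_{\la,\mu})$ is unitary with inner product $\langle\cdot\mid\cdot\rangle$, the same self-adjointness of $K$ and distinctness of the weights forces the basis to be orthogonal, and the recursion derivation shown before the theorem then computes $\langle E^n\cdot 1\mid E^n\cdot 1\rangle$ and $\langle F^n\cdot 1\mid F^n\cdot 1\rangle$ in terms of $\langle 1\mid 1\rangle>0$; these must be strictly positive (no basis vector can have zero norm, since the module is irreducible and a null vector would span a proper invariant subspace by Cauchy--Schwarz), giving the quadratic inequalities.

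Two small points deserve care. First, one must make sure the two product formulas as stated in the theorem — with the product running $k=1$ to $n$ — match the derivation, where the recursion naturally produces $k=0$ to $n-1$; this is just a reindexing $k\mapsto k+1$, harmless because $q^{2\N_0}$ is shifted to $q^2\cdot q^{2\N_0}\subset q^{2\N}$, but I would note explicitly that the inequalities are required on all of $q^{2\N_0}$ (including $k=0$) because the irreducibility hypothesis means no factor is ever allowed to vanish — a vanishing factor at some $k$ would force a reducibility relation of the form \eqref{eq:E-reducibility} or \eqref{eq:F-reducibility}, contradicting irreducibility. Second, since $M(\C_{\la,\mu})$ is assumed irreducible, we are not in the highest/lowest weight truncation cases, so both infinite products are genuinely infinite and the positivity must hold for \emph{all} $k$. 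The main obstacle I anticipate is not conceptual but bookkeeping: carefully matching the two quadratics to the stated normalized forms $1-(\la^2+q^2+qM\la)x+q^2\la^2x^2$ and $1-(q^2\la^{-2}+1+qM/\la)x+(q^2/\la^2)x^2$, keeping the powers of $q$ from Lemma \ref{lem:commutationsl2}(ii),(iii) and the $\la q^{2n}$ weight factors straight through the recursion, and confirming that the sign of the prefactor $(q/(q-q^{-1})^2)^n$ (resp.\ $(q^{-1}/(q-q^{-1})^2)^n$) is indeed positive for $0<q<1$ so that it plays no role in the positivity question.
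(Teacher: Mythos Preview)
Your proposal is correct and follows essentially the same approach as the paper: the bulk of the argument is the derivation of the recursions and product formulas carried out in the discussion preceding the theorem, and the formal proof in the paper consists only of the remark that once the inner product is defined by those squared norms, the relation $K=K^\ast$ is clear and $E^\ast=-FK$ holds by construction (since the recursions were obtained from exactly that relation). You are in fact more explicit than the paper on several points it leaves implicit, notably the reindexing discrepancy between the $k=0,\ldots,n-1$ products in the derivation and the $k=1,\ldots,n$ products in the theorem statement, and the reason why irreducibility rules out any factor vanishing.
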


Note that by putting $A$, $B$, $C$ and $D$ by 
\begin{equation*}
\begin{cases}
AB= q^2\la^2, \\ A+B =q\la M +q^2 +\la^2,
\end{cases}
\qquad 
\begin{cases}
CD= q^2/\la^2, \\
C+D = qM\la^{-1}+1 + q^2\la^{-2}
\end{cases}
\end{equation*}
we can rewrite the positivity condition as
\[
(A,B;q^2)_n>0, \qquad (C,D;q^2)_n >0 \qquad \ \forall n\in \N
\]
using the standard notation for $q$-shifted factorials \cite{GaspR}. 

\begin{proof} It remains to check that the inner product indeed gives a unitary 
representation of $U_q(\Lsu(1,1))$. The relation $K=K^\ast$ is clear, and the 
relation $E^\ast = -FK$ follows by construction. 
\end{proof}

In case the Mathieu modules are reducible, see Proposition \ref{prop:reducibility}, the 
analysis of unitarizability can be done similarly for the 
quotient space. 

\subsection{The irreducible admissible unitary representations of $U_q(\Lsu(1, 1))$}

The representations of $U_q(\Lsu(1, 1))$ have been classified under certain conditions in 
\cite{BurbK}, \cite{MasuMNNSU}, \cite{VaksK}. We restrict to the case of the 
irreducible unitary representation $U_q(\Lsu(1, 1))$ that play an important role
in the harmonic analysis on the quantum group analogue of $SU(1,1)$, see \cite{KoelS}, as well as 
the harmonic analysis on the non-compact quantum group, in the von Neumann algebraic setting,
as the analogue of the normalizer of $SU(1,1)$ in $SL(2,\C)$, see \cite{GroeKK}. 
We restrict to type I admissible representations of $U_q(\Lsu(1, 1))$, where the 
eigenvalues of the action of $K$ are contained in $q^{2\ep+ 2\Z}$, $\ep \in \{0,\frac12\}$. 
Translating the relevant representations we have the following irreducible 
$\ast$-representations of $U_q(\Lsu(1, 1))$, where one should note that the 
representations are given by unbounded operators defined on the domain of the finite 
linear combinations of the basis vectors. The Hilbert space is 
$\ell^2(\N_0)$, respectively $\ell^2(\Z)$, equipped with orthonormal basis $\{e_k\}_{k\in \N_0}$, respectively 
$\{e_k\}_{k\in \Z}$. These representations are classified by the action of the Casimir and the 
eigenvalues of $K$, where the Casimir operator $\Om$ acts
as $(q^{2\sigma +1} +q^{-2\sigma-1})/(q^{-1}-q)^2$ with the value for $\si$ given below 
for the non-extremal unitary representations of $U_q(\Lsu(1, 1))$.

\begin{enumerate}[(i)]
\item \emph{Principal series} acts in $\ell^2(\Z)$. Labeling  $\si = -\frac12 + ib$, 
with $0 \leq b \leq  -(\pi/2 \ln q)$ and $\ep\in \{0, \frac12\}$ and assume 
$(\si, \ep)\not= (-\frac12, \frac12)$.
The eigenvalues of $K$ are $q^{2\ep +2\Z}$.  
\item \emph{Strange series} acts in $\ell^2(\Z)$. Labeling $\ep  \in \{0, \frac12 \}$, 
$\si = -\frac12  - (i\pi/2 \ln q) + a$, $a > 0$.
The eigenvalues of $K$ are $q^{2\ep +2\Z}$. 
\item \emph{Complementary series} acts in $\ell^2(\Z)$. Labeling $-\frac12 <\si < 0$. 
The eigenvalues of $K$ are $q^{2\Z}$.   
\end{enumerate}

The explicit action can be found in e.g. \cite{KoelS}, see  
also \cite{BurbK}, \cite{MasuMNNSU}, \cite{VaksK} for more general representations. 

Upon comparing with Proposition \ref{prop:basisMatmodsl2} and Corollary \ref{cor:lem:basisMatmodsl2}  and Proposition \ref{prop:equivalenceirrMathieumodulessl2} 
we see that the principal series, strange series and complementary series can be matched by considering the suitable $(\la, \mu)$ such that 
the spectrum of $K$, i.e. $\la q^{2\Z}$, and the eigenvalue of
the Casimir match, i.e. 
\[
\mu + \frac{q^{-1} \la  + q \la^{-1}}{(q - q^{-1})^2} = 
\frac{q^{2\sigma +1} +q^{-2\sigma-1}}{(q^{-1}-q)^2},
\]
see Corollary \ref{cor:lem:basisMatmodsl2}. Then, by 
Proposition \ref{prop:equivalenceirrMathieumodulessl2},
all these choices lead to equivalent representations. 
So we recover the principal series, strange series and complementary series representations of 
$U_q(\Lsu(1, 1))$ as irreducible unitary Mathieu modules $M(\C_{\la,\mu})$, where the 
values of $(\la,\mu)$ are not uniquely determined, but the corresponding Mathieu module is. 

Apart from the non-extremal unitary representations, $U_q(\Lsu(1, 1))$ has two sets of 
extremal unitary representations. These are the positive and negative discrete series 
representations;

\begin{enumerate}[(iv)]
\item \emph{Positive discrete series} acts in $\ell^2(\N_0)$. Labeling 
$\si=-k$, $k\in\frac12 \N$, and the eigenvalues of $K$ are 
$q^{2k+2\N}$. 
\item \emph{Negative discrete series} acts in $\ell^2(\N_0)$. Labeling 
$\si=-k$, $k\in\frac12 \N$, and the eigenvalues of $K$ are 
$q^{-2k-2\N}$. 
\end{enumerate}

For the positive and negative series representations we need to take a quotient 
of the Mathieu module, see Corollary \ref{cor:prop:weightmodsarequotientsMathmod}. 
For the positive discrete series we can take $n_F=1$ in 
\eqref{eq:F-reducibility}, so 
$\mu=0$, which corresponds to the degenerate Mathieu module, cf.
Proposition \ref{prop:degMathieumodisVerma}.
Next take $\la = q^{2k}$, and then the positive discrete series is equivalent
to the quotient of the corresponding Mathieu module by the invariant subspace. The negative discrete series can be dealt with by taking $n_E=1$. 
It is well known that these representations are unitary, as can be checked using by performing
the analysis of 
Theorem \ref{thm:unitarityirrepMathieumodule} in case of non-irreducible Mathieu modules.


\section{Rank $1$ Mathieu modules for $U_q(\Lsl(n+1,\C))$}\label{sec:Mathieumodsln+1C}

The setting of Section \ref{sec:Mathieumodsl2C} for the case $\Lg=\Lsl(2,\C)$ is very special, since the 
weight space $U_0$ is a commutative algebra. In this section we consider the case of $U_q(\Lsl(n+1,\C))$ for 
$n\geq 2$, in which $U_0$ is not commutative.

In the setting of Theorem \ref{thm:commsubalgU0} we take 
$S=\{i_1,\cdots, i_s\}$, $s=|S|$, any subset of $\{1,\cdots, n\}$ with the condition that 
$|i_k-i_l|>1$. Then $S$ is a set of strongly orthogonal roots, see Remark \ref{rmk:stronglyorthroots}
and \cite{AgaoK}. 
Then $U_q(\Lg_S)$ corresponds to a product of commuting copies of $U_q(\Lsl(2,\C))$, see 
Remark \ref{rmk:UgS0}.
So $U_q(\Lg_S)$ is  Hopf subalgebras of $U_q(\Lsl(n+1,\C))$ generated by $E_j, F_j, K_j^{\pm1}$, $j \in S$. 
Denote by $U_q(\Lg_S)^+$, respectively $U_q(\Lg_S)^0$, $U_q(\Lg_S)^-$, the subalgebras of $U_q(\Lg_S)$ 
generated by $E_j$, respectively $F_j$, $K_j^{\pm1}$, for $j \in S$. 
In case $S$ consists of one element, we write $U_q(\Lg_{\{j\}})=U_q(\Lg_j)$, and 
then $U_q(\Lg_j)\cong U_q(\Lsl(2,\C))$. 

Using the description of $U_0$ for $U_q(\Lsl(2,\C))$ in Corollary \ref{cor:lem:sl2EnFn=p(EF)} we obtain Lemma \ref{lem:U0sln+1C}.

\begin{lem} \label{lem:U0sln+1C}
$U_0^S$ is the commutative algebra generated by $U^0$ and by $E_jF_j$, $j\in S$. 
\end{lem}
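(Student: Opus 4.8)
The plan is to derive Lemma \ref{lem:U0sln+1C} directly from Theorem \ref{thm:commsubalgU0} together with the description of $U_0$ for $U_q(\Lsl(2,\C))$ in Corollary \ref{cor:lem:sl2EnFn=p(EF)}. Since the condition $|i_k-i_l|>1$ for $i_k,i_l\in S$ is precisely the statement that $a_{i_k,i_l}=0$ for distinct $i_k,i_l\in S$ in the Cartan matrix of type $A_n$, the hypothesis of Theorem \ref{thm:commsubalgU0} is satisfied. Hence that theorem immediately gives that $U_0^S$ is a commutative subalgebra of $U_0$ generated by $E_jF_j$ for $j\in S$ and $K_j^{\pm}$ for $1\leq j\leq n+1$. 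The only gap between this and the statement of Lemma \ref{lem:U0sln+1C} is that the lemma phrases the generating set as "$U^0$ and the elements $E_jF_j$, $j\in S$", so the first step is just to observe that $U^0$ is by definition the subalgebra generated by all $K_j^{\pm}$, so the two generating sets coincide and the two statements are literally the same.

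Concretely, I would first recall that $U_q(\Lsl(n+1,\C))$ has Cartan matrix of type $A_n$, so $a_{i,j}=0$ exactly when $|i-j|>1$ (for $i\neq j$). Therefore the assumption $|i_k-i_l|>1$ for all distinct $i_k,i_l\in S$ made at the start of this section is exactly the hypothesis of Theorem \ref{thm:commsubalgU0}. Applying that theorem, we get the decomposition $U_0=U_0^S\oplus I^S$ with $U_0^S$ commutative and generated by $\{E_iF_i : i\in S\}\cup\{K_j^{\pm} : 1\leq j\leq n+1\}$. Then I would note that $\{K_j^{\pm} : 1\leq j\leq n+1\}$ generates $U^0$, so equivalently $U_0^S$ is generated by $U^0$ together with the $E_jF_j$, $j\in S$, which is the claim.

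If one wants to be slightly more self-contained and explain the role of Corollary \ref{cor:lem:sl2EnFn=p(EF)} mentioned in the run-up to the lemma, I would add the following remark within the proof: by Remark \ref{rmk:UgS0}, $U_0^S$ is generated by $U_q(\Lg_S)_0 = U_q(\Lg_S)\cap U_0$ together with the $K_i$ for $i\notin S$, and since $\Lg_S$ is a product of $|S|$ commuting copies of $\Lsl(2,\C)$, the algebra $U_q(\Lg_S)_0$ is the tensor product of the corresponding centralizer algebras $U_q(\Lsl(2,\C))_0 = \C[E_jF_j,K_j,K_j^{-1}]$ from Corollary \ref{cor:lem:sl2EnFn=p(EF)}; commutativity and the explicit generating set then follow. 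This makes clear that no new computation is needed beyond the $U_q(\Lsl(2,\C))$ case already treated, since the factors commute with one another by the Serre relations as used in the proof of Theorem \ref{thm:commsubalgU0}.

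I do not expect any genuine obstacle here: the lemma is a bookkeeping corollary of Theorem \ref{thm:commsubalgU0} specialized to type $A_n$, and the only thing to be careful about is translating between the two equivalent descriptions of the generating set (all $K_j^{\pm}$ versus $U^0$) and correctly identifying which vanishing entries of the $A_n$ Cartan matrix correspond to the condition $|i_k-i_l|>1$. If anything could be called "the hard part", it is merely making sure the indexing conventions for $U_q(\Lsl(n+1,\C))$ (rank $n$, simple roots $\al_1,\dots,\al_n$, $K_1,\dots,K_{n+1}$ via the embedding into $\Lsl(n+1)$) are stated consistently with the notation fixed earlier in the section; the mathematical content is already contained in the results cited.
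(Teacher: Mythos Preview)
Your approach is correct and matches the paper's: the lemma is stated there without a separate proof, the sentence preceding it simply pointing to Corollary~\ref{cor:lem:sl2EnFn=p(EF)} (and implicitly Theorem~\ref{thm:commsubalgU0}) as justification, which is exactly what you spell out. One small slip to fix: for $\Lg=\Lsl(n+1,\C)$ the rank is $n$, so the Cartan generators are $K_1^{\pm},\dots,K_n^{\pm}$, not $K_1^{\pm},\dots,K_{n+1}^{\pm}$.
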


So in particular, for $j\in S$ we have $\C[K_j^{\pm 1}, E_jF_j]\subset U_0^S$. 

\begin{lem} \label{lem:repssl2insln}
If $v$ is an element of a $U_q(\Lsl(n+1,\C))$-module $W$ such that $E_jF_jv = \mu_j v$ and $K_j^{\pm1} = \la_j^{\pm1}v$ for $\mu_j,\la_j \in \C$, 
then the $U_q(\Lg_j)$-module generated by $v$ is isomorphic to a quotient of the $U_q(\Lsl(2,\C))$-module 
$M(\C_{\la_j, \mu_j})$. In particular, if $M(\C_{\la_j, \mu_j})$ is irreducible as 
$U_q(\Lsl(2,\C))$-module, the $U_q(\Lg_j)$-module generated by $v$ is isomorphic to 
the $U_q(\Lsl(2,\C))$-module $M(\C_{\la_j, \mu_j})$.
\end{lem}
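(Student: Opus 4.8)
The plan is to recognize this as an instance of the universal property of rank $1$ Mathieu modules, now applied inside the rank $1$ $U_q(\Lsl(2,\C))$-theory but using the embedding $U_q(\Lg_j)\cong U_q(\Lsl(2,\C))\subset U_q(\Lsl(n+1,\C))$. First I would set $U'=U_q(\Lg_j)$, identified with $U_q(\Lsl(2,\C))$ via $E_j\mapsto E$, $F_j\mapsto F$, $K_j\mapsto K$, and restrict attention to the $U'$-submodule $W'=U'v\subset W$. Since $v$ is a weight vector for $K_j$ with $K_jv=\la_jv$, the line $\C v$ is a module for the centralizer $(U')_0$ of the Cartan in $U'$; by Corollary \ref{cor:lem:sl2EnFn=p(EF)} that centralizer is $\C[EF,K,K^{-1}]$, so the hypotheses $E_jF_jv=\mu_jv$, $K_j^{\pm1}v=\la_j^{\pm1}v$ say precisely that $\C v$ is the one-dimensional $(U')_0$-module $\C_{\la_j,\mu_j}$ (here one uses $\la_j\neq0$, which is forced since $K_j$ is invertible).

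Next I would invoke Lemma \ref{lem:universalproperty}, applied with $U$ replaced by $U'$: $W'$ is a $U'$-weight module (it is generated by the $K_j$-weight vector $v$, and $U'$ acts on it so that $E_j,F_j$ shift the $K_j$-weight by $q^{\pm2}$, exactly as in \eqref{eq:EFonweightspacesMathieusl2}), the line $\C v$ is a $(U')_0$-submodule, and the identity map $\psi\colon\C_{\la_j,\mu_j}\to\C v$ is a $(U')_0$-module homomorphism. Lemma \ref{lem:universalproperty} then produces a $U'$-module homomorphism $\Psi\colon M(\C_{\la_j,\mu_j})\to W'$ with $\Psi(1\otimes1)=v$. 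Since $v$ generates $W'$ as a $U'$-module, $\Psi$ is surjective, whence $W'\cong M(\C_{\la_j,\mu_j})/\Ker\Psi$; this is the first assertion. For the second, if $M(\C_{\la_j,\mu_j})$ is irreducible as a $U_q(\Lsl(2,\C))$-module, then $\Ker\Psi$, being a proper $U'$-submodule (it misses $1\otimes1$ since $\Psi(1\otimes1)=v\neq0$), must be $\{0\}$, so $\Psi$ is an isomorphism $M(\C_{\la_j,\mu_j})\xrightarrow{\sim}W'$.

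There is essentially no hard computation here; the only point requiring a little care is making sure the hypotheses of Lemma \ref{lem:universalproperty} are met after the substitution $U\rightsquigarrow U'$, i.e. that $W'$ genuinely is a weight module for $U'$. This follows because $v$ is a $K_j$-eigenvector and $U'=U_q(\Lg_j)$ has a weight decomposition $U'=\bigoplus_{k\in\Z}U'_{k\al_j}$ under the adjoint action of $K_j$; applying these weight components to $v$ gives $W'=\sum_k U'_{k\al_j}v$ with each summand inside the $\la_jq^{2k}$-eigenspace of $K_j$, so $W'$ is indeed a (possibly infinite) direct sum of finite-dimensional $K_j$-weight spaces. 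Alternatively, one can simply quote Proposition \ref{prop:weightmodsarequotientsMathmod} together with Lemma \ref{lem:sl2U0}: the $(U')_0$-module $(U')_0 v$ equals $\C v=\C_{\la_j,\mu_j}$ by Lemma \ref{lem:sl2U0} (the only weight occurring is that of $v$, whose weight space is one-dimensional), and Proposition \ref{prop:weightmodsarequotientsMathmod} then gives $W'$ as a quotient of $M(\C_{\la_j,\mu_j})$ directly, with irreducibility handled as above. I expect the only genuine obstacle — and it is a mild one — to be keeping the two levels of ``centralizer of the Cartan'' distinct: the ambient $U_0$ for $U_q(\Lsl(n+1,\C))$ plays no role, and everything takes place inside the $U_q(\Lsl(2,\C))$-theory of Section \ref{sec:Mathieumodsl2C}.
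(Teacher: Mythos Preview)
Your proposal is correct and follows essentially the same approach as the paper: both apply Lemma~\ref{lem:universalproperty} with $U$ replaced by $U_q(\Lg_j)\cong U_q(\Lsl(2,\C))$, taking $\tilde V=\C_{\la_j,\mu_j}$ and $\psi$ the map $1\mapsto v$, then observe that the resulting $\Psi$ is surjective onto $U_q(\Lg_j)\cdot v$. Your write-up is in fact more detailed than the paper's (you explicitly verify that $W'$ is a weight module and spell out the irreducible case), but the underlying argument is identical.
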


\begin{proof}
Apply Lemma \ref{lem:universalproperty} to the case $U=U_q(\Lsl(2,\C))$ with $V=\C v$ and 
$\tilde{V}=\C_{\la_j,\mu_j}$ with $\psi$ mapping $1$ to $v$. Then $\Psi=M(\psi) 
\colon M(\C_{\la_j,\mu_j})\to W_j\subset W$, $W_j= U_q(\Lg_j)\cdot v$ gives the 
$U_q(\Lg_j)$-intertwiner. Then $\Psi$ is surjective, and hence $W_j$ is a quotient of 
$M(\C_{\la_j,\mu_j})$. 
\end{proof}
 
Note that from Lemma \ref{lem:repssl2insln} and \eqref{eq:E-reducibility}, \eqref{eq:F-reducibility}
we can determine when $E_j^m\cdot v=0$ or $F_j^m\cdot v=0$ for some $m\in \N$ in order 
to study the reducibility of the corresponding Mathieu modules for $U_q(\Lsl(n+1,\C))$. 
However, in case the Mathieu module is associated to a set $S$ of strongly orthogonal roots,
the module is always reducible.

\begin{prop}\label{prop:sln+1-reducibleMathieumod}
Let $S\subset \{1,\cdots,n\}$ as above, and let $\la\in \C^n$, $\mu\in\C^s$. Let $\phi^S_{\la,\mu}$ be the corresponding $1$-dimensional 
representation $U_0^S\to \C$ sending $K_i\mapsto \la_i$, $E_jF_j\mapsto \mu_j$ for $j\in S$, and we denote
the extension to $U_0$ by $\phi^S_{\la,\mu}$ as well. 
Let $M(\C^S_{\la,\mu})$ be the corresponding rank $1$ Mathieu module, then 
$M(\C^S_{\la,\mu})$ has a non-trivial invariant subspace.  
\end{prop}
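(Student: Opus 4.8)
The plan is to show that $M(\C^S_{\la,\mu})$ contains a nonzero proper invariant subspace by using the embedded $U_q(\Lsl(2,\C))$'s. Fix $j\in S$. By Lemma \ref{lem:repssl2insln}, the $U_q(\Lg_j)\cong U_q(\Lsl(2,\C))$-submodule $W_j=U_q(\Lg_j)\cdot(1\otimes 1)$ is a quotient of the rank $1$ module $M(\C_{\la_j,\mu_j})$. The key point is that \eqref{eq:F-reducibility} (or \eqref{eq:E-reducibility}) will actually have a solution for the single-$\Lsl(2,\C)$ data $(\la_j,\mu_j)$ in our situation — or rather, even if it does not, we still get reducibility of the big module because the $U_q(\Lg_j)$-module $W_j$ is not all of $M(\C^S_{\la,\mu})$ when $s=|S|\geq 1$ and $n\geq 2$ (there are more weights available, coming from $E_i,F_i$ for $i\notin S$). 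So the strategy has two cases, but both reduce to producing a single explicit relation.

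First I would treat the generic sub-case. Look at the weight vector $v=1\otimes 1$ of weight $\la$. Consider the chain $F_j^m\cdot v$, $m\in\N_0$. By Lemma \ref{lem:repssl2insln} together with Proposition \ref{prop:basisMatmodsl2}(iii), the action of $E_j$ on $F_j^{m}\cdot v$ is multiplication by $\mu_j + \frac{(q^{m-1}-q^{1-m})(q^{-m}\la_j-q^m\la_j^{-1})}{(q-q^{-1})^2}$. The subspace spanned by $\{F_j^{m}\cdot v : m\geq 1\}$ is automatically $F_j$-stable; it is $K_i^{\pm 1}$-stable for all $i$ since each $F_j^m\cdot v$ is a weight vector; and — crucially — it is stable under every $E_i$ and $F_i$ for $i\neq j$, because $|i-k|>1$ forces $a_{i,j}=0$ so $E_i,F_i$ commute with $E_j,F_j$, and $E_i\cdot v=0$, $F_i\cdot v$ lands back in this type of chain (one needs to check $E_i(F_j^m\cdot v)=F_j^m E_i\cdot v=0$ and similarly for the relations $E_iF_i$ acting by the scalar $\mu_i$). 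Thus the only generator that can escape this subspace is $E_j$. Hence the span of $\{F_j^m\cdot v: m\ge 1\}$ is invariant precisely when $E_j\cdot(F_j\cdot v)=0$, i.e. when $\mu_j+\frac{\la_j-\la_j^{-1}}{q-q^{-1}}=0$. That is a codimension-one condition, not automatic, so this naive attempt does not yet prove the proposition; the honest argument must instead produce a genuinely invariant subspace without such a condition.

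The correct approach, and the one I would actually carry out: since $n\geq 2$, there exists $i_0\notin S$ — indeed $|S|\le \lceil n/2\rceil<n+1$, but more to the point $S$ can never exhaust $\{1,\dots,n\}$ once it is strongly orthogonal unless $n=1$. Pick such $i_0$. Now the subspace $W'$ of $M(\C^S_{\la,\mu})$ spanned by all PBW-monomials acting on $v$ that involve at least one $E_{i_0}$ or $F_{i_0}$ — concretely, following the proof of Theorem \ref{thm:commsubalgU0}, take $W'$ to be the $U$-submodule generated by $\{F_{i_0}\cdot v\}$ (or whichever of $E_{i_0}\cdot v$, $F_{i_0}\cdot v$ is appropriate). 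I would show $W'$ is proper by the same height-function argument as in Proposition \ref{prop:degMathieumodisVerma}: if $v\in W'$ then $v=\sum X_\ell F_{i_0}\cdot v$, decompose each $X_\ell=\sum_\be X_\ell^\be$ along $U=\bigoplus U_\be$, match the weight $\la$ to force $X_\ell^\be F_{i_0}\in U_0$, then $h_{i_0}^-(X_\ell^\be F_{i_0})>0$ by Proposition \ref{prop:hi-onU0}, so $X_\ell^\be F_{i_0}\in I^S$, which acts as $0$ on $v$ — contradiction. And $W'$ is nonzero since $F_{i_0}\cdot v\neq 0$ by Lemma \ref{lem:sizerank1Mathieumod} (it is a nonzero weight vector of weight $\la q^{-\al_{i_0}}$, as the rank $1$ Mathieu module is free over $U_0$ as a left $U$-module in the sense of that lemma). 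The main obstacle is the bookkeeping in the properness argument — making sure the weight constraint really does force membership in $I^S$ and hence a strictly positive $i_0$-height — but this is exactly parallel to the argument already given in the proof of Proposition \ref{prop:degMathieumodisVerma}, so it goes through. (In the degenerate-along-$i_0$ spirit, an alternative is to observe that the $1$-dimensional $U_0$-module $\C^S_{\la,\mu}$ is annihilated by $E_{i_0}F_{i_0}\in I^S$, which is what makes $F_{i_0}\cdot v$ behave like a lowest-weight-type vector in its $U_q(\Lg_{i_0})$-direction and is the real source of the reducibility.)
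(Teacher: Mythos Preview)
Your third-paragraph argument --- pick $i_0\notin S$ (which exists for $n\geq 2$ since $S$ consists of pairwise non-adjacent indices), take $W'$ to be the $U$-submodule generated by $F_{i_0}\cdot v$, and show $1\otimes 1\notin W'$ via the height-function reasoning of Proposition~\ref{prop:degMathieumodisVerma} --- is correct and is exactly the paper's (two-sentence) proof. One minor point: the inequality $h_{i_0}^-(X_\ell^\be F_{i_0})>0$ does not follow from Proposition~\ref{prop:hi-onU0} as you cite it (that proposition requires both factors to lie in $U_0$), but rather directly from the weight constraint $\be=\al_{i_0}$, which forces every PBW term of $X_\ell^\be F_{i_0}$ to have $\Eroot$ with positive $i_0$-component --- just as in the paper's proof of Proposition~\ref{prop:degMathieumodisVerma}.
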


\begin{proof} Let $W$ be the invariant subspace generated by $F_j\otimes 1$ in $U\otimes_{U_0}\C_{\la,\mu}$
for $j\notin S$. 
As in the proof of Proposition \ref{prop:degMathieumodisVerma}, we see that $1\otimes 1 \notin W$, so that 
$W$ is a proper invariant subspace. 
\end{proof}

\begin{rmk}\label{rmk:prop:sln+1-reducibleMathieumod}
The representations constructed in Proposition \ref{prop:sln+1-reducibleMathieumod}
by modding out the maximal proper subspace 
are in general non-extremal modules of $U_q(\Lsl(n+1,\C))$. 
\end{rmk}

We expect that generically the invariant subspace $W$ is the maximal proper subspace, so that 
$M(\C^S_{\la,\mu})/W$ is irreducible.  
A further study of these representations, possibly in relation to the 
results of \cite{FutoKZ}, is needed in order to determine the usefulness in the 
analytic study of the non-compact quantum group analogs of $SU(r,s)$, $r+s=n+1$, and related 
homogeneous spaces. 


\end{document}